\def\Ati{Atiyah \cite{Ati}}
\def\Bre{Bredon \cite{Bre}}
\def\Broo{Brooks \cite{Broo}}
\def\Del{Deligne \cite{Del}}
\def\Dix{Dixmier \cite{Dix}}
\def\Eil{Eilenberg \cite{Eil}}
\def\Gol{Golan \cite{Gol}}
\def\GroKahHyp{Gromov \cite{GroKahHyp}}
\def\Grot{Grothendieck \cite{Grot}}
\def\Mal{Malgrange \cite{Mal}}
\def\Mun{Munkres \cite{Mun}}
\def\Ped{Pedersen \cite{Ped}}
\def\Rei{ Reich \cite{Rei}}
\def\RouSan{Rourke Sanderson\cite{RouSan}}
\def\SalWoe{Saloff-Coste Woess \cite{SalWoe}}
\def\Ste{Steenrod \cite{Ste}}
\def\Takesaki{Takesaki \cite{Takesaki}}
\def\Vas{Vas \cite{Vas}}
\def\Whi{Whitehead \cite{Whi}}
\newtheorem{theorem-introduction}{Theorem}
\newtheorem{corollary-introduction}{Corollary}
\newtheorem{definition}[subsubsection]{Definition}
\newtheorem{lemma}[subsubsection]{Lemma}
\newtheorem{corollary}[subsubsection]{Corollary}
\newtheorem{theorem}[subsubsection]{Theorem}
\newtheorem{proposition}[subsubsection]{Proposition}
\theoremstyle{definition}
\newtheorem{remark}[subsubsection]{Remark}
\newtheorem{notations}[subsubsection]{Notations}
\newtheorem{example}[subsubsection]{Example}
\newtheorem{remarks}[subsubsection]{Remarks}
\def\im{\ensuremath{\mathrm Im}}
\def\supp{\ensuremath{\mathrm supp}}
\def\abut{\ensuremath{\Rightarrow}}
\def\ker{\ensuremath{\mathrm Ker}}
\def\Ker{\ensuremath{\mathrm Ker}}
\def\Cok{\ensuremath{\mathrm Coker}}
\def\conv{\ensuremath{\star}}
\def\tr{{\mathrm tr}_{N(G)}}
\def\Tr{{\mathrm Tr}}
\def\NN{{\mathbb  N}}
\def\ZZ{{\mathbb  Z}}
\def\CC{{\mathbb  C}}
\def\RR{{\mathbb  R}}
\def\QQ{{\mathbb  Q}}
\def\HH{{\mathbb  H}}
\def\Cg{\ensuremath{{\mathcal{C}_{God}}}}
\def\dimG{{\mathrm dim}_{N(G)} }
\def\VN{ \text{M}}
\def\Dom{{\mathrm Dom}}
\def\Im{{\mathrm Im}}
\def\Ran{{\mathrm Ran}}
\def\Hom{{\mathrm Hom}}
\def\inc{{\mathrm inc}}
\def\p{{\ensuremath{ p_{*(2)} } }}
\def\jp{{\ensuremath{(j^* p)_{*(2)} } }}
\newcommand{\ap}[1]{\ensuremath{ (a^{*}_{#1} p)_{*(2)} }}
\def\nc{{\mathrm n.c.}}
\def\para{\S}
\def\card{\sharp}
\def\Card{\sharp}
\def\canfilt{\tau_{\leq}}
\def\chainr{{\mathrm r } } 
\def\Op{h}
\def\dto{\dashrightarrow}
\def\Mod{{\text{Mod}}}
\def\T{\ensuremath{{\mathcal{T}}}}
\def\U{\ensuremath{{\mathcal{U}}}}
\def\C{\ensuremath{{\mathcal{C}}}}
\def\I{\ensuremath{{\mathcal{I}}}}
\def\P{\ensuremath{{\mathcal{P}}}}
\def\E{\ensuremath{{\mathcal{E}}}}
\def\ot{\ensuremath{\leftarrow}}
\def\cS{\ensuremath{{\mathcal{S}}}}
\def\R{\ensuremath{{\mathcal{R}}}}
\def\N{\ensuremath{{\mathcal{N}}}}
\def\M{\ensuremath{{\mathcal{M}}}}
\def\O{\ensuremath{{\mathcal{O}}}}
\def\L{\ensuremath{{\mathcal{L}}}}
\def\K{\ensuremath{{\mathcal{K}}}}
\def\F{\ensuremath{{\mathcal{F}}}}
\def\H{\ensuremath{{\mathcal{H}}}}
\def\B{\ensuremath{{\mathcal{B}}}}
\def\A{\ensuremath{{\mathcal{A}}}}
\newcommand{\Int}[1]{\ensuremath{\overset{\circ}{#1}}}
\newcommand{\Adh}[1]{\ensuremath{\overline{#1}}}
\newcommand{\dlog}[2]{\ensuremath{\Omega^{#1}_{X}(log #2)}}
\newcommand{\Slog}[2]{\ensuremath{\mathcal{S}^{\infty, #1}(log #2)}}
\newcommand{\un}[1]{\ensuremath{\underline{#1}}}
\newcommand{\Rm}[2]{\ensuremath{\overset{#1}{\check{#2}}}}
\def\L{\ensuremath{{\mathcal{L}}}}
\def\I{\ensuremath{{\mathcal{I}}}}
\def\dbar{\ensuremath{{\overline{\partial}}}}
\def\C{\ensuremath{{\mathcal{C}}}}
\title[Some mixed Hodge structures on $l^{2}-$cohomology groups]{Some mixed Hodge structures on $l^{2}-$cohomology groups of coverings of K\"{a}hler manifolds.}
\author{P. Dingoyan}
\address{P. Dingoyan\\
 Universit\'e Paris 6, \\case 247, 4 place Jussieu, 75252 Paris Cedex 05, France.}
 \email{dingoyan@math.jussieu.fr}
\begin{document}
\begin{abstract}
We give methods to compute $l^{2}-$cohomology groups of a covering manifold obtained by removing the pullback of a (normal crossing) divisor to a covering of a compact K\"ahler manifold.

We prove that in suitable quotient categories, these groups admit natural mixed Hodge structure whose graded pieces are given by the expected Gysin maps.  
\end{abstract}

\maketitle
\bibliographystyle{plain}
\section{introduction.}

\subsection{}

The Hodge decomposition of the cohomology ring of a compact K\"ahler manifold $X$ defines a Hodge structure. 
This gives strong relations between the topology of the manifold and the holomorphic structure. 
This decomposition still holds for an infinite covering $p:\tilde X\to X$ once one restricts to the space of square integrable harmonic forms. Let us recall two achievements in this setting.

The $l^{2}-$cohomology groups of the covering $p:\tilde X\to X$  are  the De Rham cohomology groups $H^{k}_{d(2)}(\tilde X)=\Ker (d)/\Im(d)$ of the square integrable forms on $\tilde X$.  
The reduced $l^{2}-$cohomology groups $\Ker (d)/\Adh{\Im(d)}$ are isomorphic to the harmonic spaces $\H^{i}_{d(2)}(\tilde X)$ (\cite{Ati}, \cite{CheGro}, \cite{Shu} and \cite{Eck}, \cite{CheGrodeux}, \cite{Luc},  \cite{Dod}).
Assume that $p:\tilde X\to X$ is a Galois covering with Galois group $G$. 

Then a fundamental result of Atiyah \cite{Ati} is that  the Euler characteristic of $X$ is equal to the $l^{2}-$Euler characteristic of $p:\tilde X\to X$. Note that the $l^{2}-$harmonic spaces are modules over the Von Neumann algebra $N(G)$ generated by the left action of $G$ on $l^{2}(G)$. This allows to define a Von Neumann dimension. 

When $X$ is a K\"{a}hler manifold, the $d-$harmonic spaces on $\tilde X$ admit a Hodge decomposition according to bi-type:
$ \H^{r}_{d(2)}(\tilde X) = \oplus_{p+q=r} \H^{(p,q)}_{\dbar(2)}(\tilde X)$. If moreover $\tilde X$ is the universal covering space of $X$,
 Gromov \cite{GroCras}, \cite{GroKahHyp} proves that the non vanishing of $\H^{1}_{d(2)}(\tilde X)$ implies that there exists a proper equivariant holomorphic map from $\tilde X$ to the unit disc.

\subsection{}

Deligne \cite{Del} \cite{Deltrois} discovered that the cohomology ring of any algebraic variety, open or singular, carries a mixed Hodge structure (a real filtration whose quotients have a Hodge structure subject to natural compatibilities). 
In this article, one will put such mixed Hodge structure on the $l^{2}-$cohomology groups of the covering $p^{-1}(X\setminus D)\to X\setminus D$, which is defined by the restriction over a Zariski open subset of a covering of $X$. 

Let  $D=D_{1}\cup \ldots \cup D_{r}$ be a normal crossing divisor in $X$. One first recall that the mixed Hodge structure on the cohomology groups of the quasi-projective manifold $X\setminus D$ is built from the Hodge structures of the  compact projective manifolds $X$, $D_{i}$, $D_{i}\cap D_{j}$, $\ldots$. An example is provided by one irreductible smooth divisor $D$ in $X$: 
the  Gysin-Leray exact sequence  $ \ldots\to H^{p-2}(D,\CC)\overset{i_{!}}{\to} H^{p}(X,\CC)\to H^{p}(X\setminus D,\CC)\overset{res}{\to} H^{p-1}(D,\CC)\ldots$ enables one to filter $H^{p}(X\setminus D,\CC)$ by subspaces whose quotients carry Hodge structures. In general, a spectral sequence considers further relations between the divisors.

Now, the $l^{2}-$cohomology groups of $p^{-1}(X\setminus D)\to X\setminus D$ are defined as the cohomology  over $X\setminus D$ of the locally constant sheaf $\p\CC_{|X\setminus D}$, the sheaf of locally constant square integrable functions in the fiber of $p$. This sheaf is isomorphic to $l^{2}(G)\otimes_{\ZZ[G]}p_{!}\ZZ_{\tilde X\setminus p^{-1}(D)}$ and the cohomology groups are isomorphic to the groups of equivariant cohomology of $p^{-1}(X\setminus D)$ with values in $l^{2}(G)$ (see \ref{Link to singular cohomology}).

The building blocks of the mixed Hodge structure on $H^{.}( X\setminus D,\p\CC)$ will be the Hodge structures on the harmonic spaces $\H^{.}_{d(2)}(\tilde X), \H^{.}_{d(2)}(p^{-1}(D_{i})), \H^{.}_{d(2)}(p^{-1}(D_{i}\cap D_{j})),\ldots $ and Gysin's morphisms between them. Note that $G$ acts co-compactly on the manifolds $\tilde X, p^{-1}(D_{i}),\ldots$.

We first transpose the sheaf theoretic development of mixed Hodge theory as in Deligne \cite{Del} to the $l^{2}-$setting. To this aim, we use the definition of Campana-Demailly \cite{CamDem} for the $l^{2}-$cohomology groups of $G-$equivariant coherent analytic sheaves. This gives enough functoriality to obtain the Gysin's morphisms between the non reduced $l^{2}-$cohomology groups.

In order to use the Hodge structure on the harmonic spaces, we work in a quotient category or localized category (\cite{Gab}, \cite{Grot}, \cite{Ver}).
 In the quotient category, the morphisms from the $\dbar-$harmonic spaces $\H^{(p,q)}_{\dbar(2)}(p^{-1}(\cap_{0\leq i\leq l}D_{t_{i}}))$ to the unreduced Dolbeault cohomology groups $H^{p,q}_{\dbar(2)}(p^{-1}(\cap_{0\leq i\leq l}D_{t_{i}}))$ are isomorphisms. Hence the non reduced parts of the Dolbeault cohomology groups become isomorphic to zero.

Then, the weight spectral sequence, which abuts to the $l^{2}-$cohomology groups of $p^{-1}(X\setminus D)\to X\setminus D$, 
becomes a spectral sequence of  Hodge structures. As in  \Del , it degenerates at $E_{2}$ and is therefore computable (see Theorem 3). 
 
The initial motivation of this work was the study of  non compact divisors in the universal covering of $X$, whose irreducible components are compact. We refer to the article of Nori \cite{Nor} for results on this question. Here we focus on the technical part of mixed Hodge structure modulo some torsion theory. In a subsequent paper, functoriality, geometrical and analytical applications will be given.

\subsection{} Before giving detailed statements, we comment on the use of  torsion theory. The precise definition of a torsion theory will be given in \ref{torsion theory}. It is a quite standard tool in $l^{2}-$cohomology (see Farber \cite{Far}, Eyssidieux \cite{Eys}, L\"uck \cite{Luc}, Sauer-Thom \cite{SauTho} and it appears implicitly in Cheeger-Gromov \cite{CheGrodeux}, Shubin \cite{Shu}). We combine the torsion theory with a theory of $l^{2}-$sheaves (Eyssidieux \cite{Eys}, Campana-Demailly \cite{CamDem}): we can then link the simplicial, the topological and the analytical $l^{2}-$cohomology in the framework of the mixed Hodge structures. 

In the quotient category, the torsion modules are by definition isomorphic to zero and isomorphisms are defined up to torsion modules. 
Hence it may be useful to interpret in the original category an isomorphism in the localised category.
In the case of a Galois covering, the Von Neumann algebra $N(G)$ of the groups $G$ acts faithfully on the $l^{2}-$cohomology groups. We can work modulo the modules with vanishing Von Neumann dimension and results may be interpreted in terms of the orbit $N(G)x$ of elements. This leads to a $\partial\dbar-$lemma (\ref{A ddbar-lemma}) up to a weak isomorphism. 

\subsection{}
  The following theorem gives a functorial description of the $l^{2}-$Hodge decomposition. It will be used later to relate the  $l^{2}-$cohomology groups over $X$, $D$, $D_{i}\cap D_{j}$,$\ldots$  

Let $(\p\Omega^{.},d)\to X$ be the complex of $l^{2}-$direct image of holomorphic forms:  a germ at $x\in X$ is given by a square integrable holomorphic form in a neighborhood $p^{-1}(V)$ of $p^{-1}(x)$ (\cite{CamDem} and \ref{Direct $l^{2}-$image}).

\begin{theorem-introduction} Let $p: \tilde X\to (X,\omega)$ be a Galois covering of a compact hermitian manifold with covering group $G$. Let $N(G)$ be the Von Neumann algebra of $G$. 
\item[1)] There exists a Hodge to De Rham spectral sequence  of $N(G)-$modules $$H^{p,q}_{\dbar(2)}(\tilde X)\simeq H^{q}(X,\p\Omega^{p}) \abut \HH^{p+q}(X, (\p\Omega^{.},d))\simeq H^{p+q}_{d(2)}(\tilde X)\,.$$
\item[2)] Assume that $\omega$ is a K\"{a}hler metric.
Let  $\tau$ be a real torsion theory such that 
$\Adh{\Im  \dbar}/\Im \dbar$ is a torsion module. Then the Hodge to De Rham spectral sequence $H^{p,q}_{\dbar(2)}(\tilde X) \abut  H^{p+q}_{d(2)}(\tilde X)$ degenerates at $E_{1}$ in the quotient category $Mod(N(G))_{/\tau}$ and the isomorphism $\oplus_{p+q=r} \H^{(p,q)}_{\dbar(2)}(\tilde X)\simeq H^{r}_{d(2)}(\tilde X)$  in $Mod(N(G))_{/\tau}$ defines a $\tau-$Hodge structure on $H^{r}_{d(2)}(\tilde X)$.
 \end{theorem-introduction}

Standard torsion theories are $\tau_{dim}$ and $\tau_{\U(G)}$:
%
The torsion theory $\tau_{dim}$ is such that modules of  zero $G-$dimension are torsions (we use the generalised Von-Neumann dimension of \cite{Luc} Chap. 6). 
 
 Let $\U(G)$ be the ring of operators affiliated to $N(G)$: It is the ring of  unbounded operators on $l^{2}(G)$ that commutes with the right action of $G$ on $l^{2}(G)$. It is isomorphic to the quotient ring of $N(G)$ by the multiplicative set of weak isomorphism (see \cite{Luc} Chap. 8). 
 Then $\tau_{\U(G)}$ is the torsion theory such that a module $M$ is a torsion module if $\U(G)\otimes_{N(G)}M$ is isomorphic to zero. When the covering $p:\tilde X\to X$ is Galois and $X$ is compact, then $\Adh{\im\dbar}/\im\dbar$ is a $\tau_{\U(G)}-$torsion module.

\subsection{}
The following theorem enables one to define a mixed Hodge structure on the $l^{2}-$cohomology groups of $p^{-1}(X\setminus D)
\to X\setminus D$. It gives two filtrations for computing these groups. The weight filtration $W$ measures the singularities along $D$ of some representative of a cohomology class in $H^{.}(X\setminus D, \p\CC)$. The Hodge filtration is related to the decomposition of forms into bi-type. 

Let $\p\RR=l^{2}(G,\RR)\otimes_{\RR[G]}p_{!}(\RR_{\tilde X})$ and  $\p\CC=l^{2}(G,\CC)\otimes_{\CC[G]}p_{!}(\CC_{\tilde X})$ be the sheaves on $X$ of locally constant functions which are square integrable in the fibers of $p$.
 Let  $(\p\dlog{.}{D},d)$ be the complex of sheaves on $X$ of $l^{2}-$direct image  (\cite{CamDem} and \ref{Direct $l^{2}-$image}) of the logarithmic forms with pole on $D$. This complex is bi-filtered by the weight filtration $W$ (see \ref{Local setting}) and the Hodge filtration $F$.

\begin{theorem-introduction}

Let $D=D_{1}\cup\ldots\cup D_{r}$ be a normal crossing divisor in $X$.  Set $\underline{D}_{l}=\sqcup_{\card I=l}\cap_{i\in I} D_{i}$, $(1\leq l\leq n)$ and  $\underline{D}_{0}=X$. 

\item[1)] The group $H^{.}(X\setminus D,\p\CC)$ is isomorphic as a $N(G)-$module to $\HH^{.}(X,(\p\dlog{.}{D},d))$.
\item[2)]Let $\tau$ be a real torsion theory on $\Mod(N(G))$ and let $\Mod(N(G))_{/\tau}$ be the quotient category. Assume the $l^{2}-$Hodge to De Rham spectral sequences of each $p^{-1}(\underline{D}_{l})$, $l\in\{0,\ldots, r\}$, degenerates in $\Mod(N(G))_{/\tau}$  as in theorem 1. 
Then:
\item[i)]  The weight spectral sequence for $\HH^{.}(X,(\p\dlog{.}{D},W,d))$ whose  $E_{1}^{-p,p+q}-$term  is isomorphic to $H^{q-p}_{d (2)}(p^{-1}(\underline{D}_{p}))$ 
 degenerates  at $E_{2}$ in $\Mod(N(G))_{/\tau}$.
\item[ii)] The Hodge spectral sequence for $\HH^{.}(X,(\p\dlog{.}{D},F,d))$ which  has term $E_{1}^{p, q}$ isomorphic to 
$H^{q}(X,\p\dlog{p}{D})$ degenerates at $E_{1}$ in $\Mod(N(G))_{/\tau}$.
\item[iii)] Define the weight filtration $W$ on $H^{k}(X\setminus Y,\p\RR)$ to be the shifted filtration $W[k]_{.}:=W_{.-k}$  of the filtration induced by the weight spectral sequence. 

Define the Hodge filtration $F$ on $H^{k}(X\setminus Y,\p\CC)$ to be 
the filtration  induced by the Hodge spectral sequence.

Then $(H^{k} (X\setminus D,\p\RR), W, F)$ is a mixed Hodge structure in $\Mod(N(G))_{/\tau}$.
\end{theorem-introduction}
We note further that for a torsion theory 
$\tau$ defined in terms of the group $G$ only, such as $\tau_{dim}$ or $\tau_{\U(G)}$, the above mixed Hodge structure in $\Mod(N(G))_{/\tau}$ is functorial in the category of $G-$covers. Moreover two compactifications of $X\setminus D$ which are dominated by a third one will define isomorphic mixed Hodge structures.
\subsection{} The reduction of the above  spectral sequences to $\Mod(N(G))_{/\tau_{dim}}$ or  $\Mod(N(G))_{/\tau_{\U(G)}}$ always  defines mixed Hodge structures. The isomorphism $E_{2}(W)\simeq E_{\infty}(W)$  gives the expected realisation in term of harmonic spaces and Gysin maps (notations of \ref{Reduction with respect to the dimension torsion theory}): 
 \begin{theorem-introduction} Let $p:\tilde X\to X$ be a Galois covering of a compact K\"{a}hler manifold. Let $G$ be its group of deck transformations and let $N(G)$ be its von Neumann algebra. Then in the quotient category $\Mod(N(G))_{/\tau_{\dim}}$ 
 or $\Mod(N(G))_{/\tau_{\U(G)}}$,
  the space $Gr^{W}_{l+k}H^{k}(X\setminus D,\p\CC)$ is isomorphic to the middle homology of the Gysin sequence $$ \H^{k-l-2}_{d(2)}(p^{-1}(\underline{D}_{l+1}))\overset{}{\to}\H^{k-l}_{d(2)}(p^{-1}(\underline{D}_{l})) \overset{}{\to}\H^{k-l+2}_{d(2)}(p^{-1}(\underline{D}_{l-1}))\,. $$ 
 Moreover  $$Gr_{F}^{p}H^{p+q}(X\setminus D,\p\CC)\simeq \H^{q}_{\dbar(2)}(\tilde X, \Omega^{p}_{\tilde X}log(p^{-1}(D)))\,. $$

  \end{theorem-introduction}
\bigskip
 Part of the graded module associated to the weight filtration on $H^{.}(X\setminus D,\p\CC)$ has a combinatorial description:

A dual CW complex $\tilde K$ is attached to $(\tilde X, p^{-1}(D))\to (X,D)$ (see \ref{Simplicial structure}): A connected component of $p^{-1}(\underline{D}_{k+1})$ defines a $k-$cell. It is attached along $(k-1)-$cells corresponding to connected components of $p^{-1}(\underline{D}_{k})$ which contain it.

\begin{corollary-introduction}
 In $\Mod(N(G))_{/\tau_{dim}}$ or  $\Mod(N(G))_{/\tau_{\U(G)}}$, the space $Gr^{W}_{2n}H^{2n-(k+1)}(X\setminus D,\p\CC)$ ($n=\dim_{\CC}X$) is isomorphic to the $k-$th (reduced) relative $l^{2}-$homology group $\bar H_{k,(2)}(|\tilde K|,|\tilde K(\infty)|)$ of the dual CW-complex associated to $(\tilde X,p^{-1}(D))\to(X, D)$.  
\end{corollary-introduction}
Here $\tilde K(\infty)$ is the set of cells whose isotropy subgroups under the action of $G$ are infinite.  
%
%
\subsection{ } My hearty thanks go to S. Vassout and G. Skandalis for illuminating discussions on Von Neumann algebras. Thanks to B. Klingler and P. Eyssidieux for stimulating discussions on the subject. I thank the referee for its valuable work and helpful comments. I thank G. Courtois, S. Diverio, E. Falbel, V. Minerbe and M. Wolff for their remarks and corrections on the initial version of this article.

\section{preliminaries.}
\subsection{Real structures.}\label{real structures}
\subsubsection{The Godement resolution}\label{Godement-resolution}(See
Godement \cite{God} or \Bre): 
Let $X$ be a topological space.
Let $R$ be a ring and  $\R$ be the sheaf of rings it defines. Let $\A$ be a sheaf of 
(left) $\R-$modules. Let $\C^{.}(X,\A):=(\Cg^{.}(X,\A),d)$ be the Godement resolution (\cite{God} p. 168)
 by  sheaves of $\R-$modules with $\R-$linear differential $d$.  Let $\Gamma(X,.)$ be the functor of global sections.
 
 Then 
 \begin{eqnarray*}
 \A\to \C^{.}(X,\A) \,,& & \A\to C^{.}(X,\A)=(\Gamma(X,\Cg^{.}(X,\A)),d)
 \end{eqnarray*}
    are 
covariant additive exact functors with values in the category of differential $\R-$sheaves, 
resp. cochain $R-$complexes. If $X$ is clear from the context, we write $\C^{.}(\A),\ldots$ 


If $(\F^{.},d)$ is a differential sheaf, let $\C^{.}(\F^{.})$ be the total complex associated to the double complex $\Cg^{q}(X,\F^{p})$. Let $j:Y\to X$ be a continuous map and $(\F^.,d)$ be a differential sheaf of $\R-$modules on $Y$. One sets  $Rj_
{*}\F^{.}:=j_{*}\C^{.}(\F^{.})$ 

\subsubsection{Real structures}
Assume that $R$ is a $\RR-$algebra, then any sheaf of $\R-$modules is 
also a sheaf of $\RR-$vector spaces so that $\A\otimes_{\RR}\CC$ is a sheaf of $\R
\otimes_{\RR}\CC-$left modules. Let $i:\A\to \A\otimes_{\RR}\CC$.

Then 
\begin{eqnarray}\label{real structure}
  j_{*}(\A)\otimes_{\RR}\CC&\to& j_{*}(\A\otimes_{\RR}\CC)\\
%
\C^{.}(X,\A)\otimes_{\RR}\CC\overset{  \C^{.}(i)\otimes 1_{\CC}}{\longrightarrow}\C^{.}(X,\A\otimes_{\RR}\CC)& &
C^{.}(X,\A)\otimes_{\RR}\CC\overset{ C^{.}(i)\otimes 1_{\CC}}{\longrightarrow}C^{.}(X,\A\otimes_{\RR}\CC)
  \end{eqnarray}    
    are $\R\otimes_{\RR}\CC-$isomorphisms (resp. $R\otimes_{\RR}\CC-$isomorphisms).
\subsubsection{ }
A real structure on a $\R\otimes_{\RR}\CC-$sheaf $\B$ is a $\R-$subsheaf  
$\A\overset{i}{\to}\B$ such that $\A\otimes_{\RR}\CC\overset{i\otimes 1_{\CC}}{\to}\B$ is an isomorphism. It induces 
a real structure on the Godement resolution. 

Then $ H^{.}(X,\,\A)\otimes_{\RR}\CC\to H^{.}(X, \,\A\otimes_{\RR}\CC)$ is a $R\otimes_{\RR}\CC-$isomorphism.  
\begin{definition}
Let $G$ be a discrete  group. Let $N_{l}(G)$ (resp. $N_{r}(G)$) be the left (resp. right) von Neumann algebra of $G$ generated by operators on $l^{2}(G):=l^{2}(G,\CC)$ of left (resp. right) convolution by elements in $\CC[G]$. Then one sets $N(G):=N_{l}(G)$. 
\end{definition}
The $\RR[G]-$isomorphism
 $l^{2}(G,\CC)\ni a\to Re(a)+iIm(a)\in l^{2}(G,\RR)\otimes\CC$   induces  decompositions 
 \begin{eqnarray}
  End_{\CC}( l^{2}(G,\RR)\otimes\CC) &\simeq & End_{\RR}( l^{2}(G,
\RR))\otimes\CC \\
  End_{\CC[G]}( l^{2}(G,\RR)\otimes\CC) &\simeq & End_{\RR[G]}( l^{2}(G,
\RR))\otimes\CC \\
 N_{r}(G,\CC)&\simeq &N_{r}(G,\RR)\otimes \CC
\end{eqnarray}

The left action of $G$ on  $l^{2}(G,\RR)$ defines a left action of $G$ on $End_{\RR}( l^{2}(G,\RR))$ 
and $End_{\CC} l^{2}(G,\CC)$ so that $g(m(a))=(gm)(ga)$.  The set of continuous invariant morphisms for this action, $N_{r}(G,\RR)$ and $N_{r}(G,\CC)$, are  algebras.    
 An element $ \rho(f)\in N_{r}(G,\CC)$ is represented by right convolution  with a function $f\in l^{2}(G,\CC)$ which is moderate (\cite{Dix} 13.8.3): $\exists C\geq 0:\, \forall g\in \CC[G],\, ||g\conv f||_{2}\leq C||g||_{2}$. Then $\rho(f)\in N_{r}(G,\RR)$ is equivalent to $f$ real valued.
 
The symetric statements hold if we work with the right Von Neumann algebra generated by right regular representation.  Hence  $N(G,\CC)\simeq 
N(G,\RR)\otimes \CC$. 

\subsection{A lemma on von Neumann algebras.}
In this section we recall basic definitions about a (finite) Von Neumann algebra $M$, its representations and the Murray-Von Neumann dimension.

Our goal is the lemma \ref{T-lemma} which enables to lift, up to a weak isomorphism,  to the source a vector in the closure of the range of a $M-$Fredholm operator. This lemma, of analytical independent interest, will give a non trivial example of torsion theory.
   
For this section we refer to \Dix, \Ped, \Takesaki.
A survey on von Neumann algebras is given in \cite{Luc} chap. 9. 

\begin{definition}
Let $H_{0}$ be some (separable) Hilbert space. A $\C^{*}-$subalgebra $\VN$ of $\B(H_{0})$ is a Von Neumann algebra if $\VN$ is weakly closed
$\iff \VN$ is strongly closed 
$\iff \VN$ is equal to its bi-commutant in $\B(H)$. 
\end{definition}
One let $M'$ be the commutant of $M$ in $\B(H_{0})$. 
Let $H$ be some Hilbert space.
A representation $\pi:\VN\to \B(H)$ is normal if it is continuous on bounded increasing net of $\VN$. Normality of $\pi$ implies that $\Ker (\pi)$ and $\pi(\VN)$ are Von Neumann algebras. 
Then $(H,\pi)$ (or simply $H$) is called a $\VN-$Hilbert module.
We recall that weakly isomorphic $\VN-$Hilbert modules are $\VN-$isometric:
\begin{lemma}\label{weak isomorphism} Let $f:(H_{1},\pi_{1})\to (H_{2},\pi_{2})$ be a bounded $\VN-$linear weak isomorphism (injective with dense range). Let $f=up$ be its polar decomposition. Then $u: H_{1}\to H_{2}$ is a $\VN-$isometry.  
\end{lemma}
\begin{definition}
Let $\VN$ be a Von Neumann algebra on a separable Hilbert space. Let $\VN_{+}$ be the cone of positive operators in $\VN$.
\item[1)]A  trace is a function $t: \VN_{+}\to [0,+\infty]$  such that if $\lambda>0$ and $x,y\in \VN_{+}$ then $t(\lambda x)=\lambda t(x)$, $t(x+y)=t(x)+t(y)$ and for all unitary $u\in A$, $t(u^*xu)=t(x)$. 
\item[2)] A state  is a positive linear functional of norm one: $\varphi\in \VN^{'}$ such that $\varphi(\VN_{+})\subset \RR^+$ and $\varphi(1)=1$. 
\item[3)] A trace or a state is normal if it is continuous on limit of increasing nets.
\item[4)] A faithful trace is called finite if $t(1)<+\infty$ (and $\VN$ is then called finite von Neumann algebra). It is called semi finite if $\VN_{+}^{t}=\{ y\in \VN_{+}:\, t(y)<+\infty\}$ is weakly dense in $\VN_{+}$. Then for all $x\in\VN_{+}$, $t(x)=\sup_{y\leq x,\ y\in \VN_{+}^{t}}t(y)$.
\end{definition}
\begin{example}
\item[1)]
Let $\delta_{e}\in l^{2}(G)$ be the dirac function at the unit element $e$ of $G$. The trace of $n\in N_{l}(G)$ or $N_{r}(G)$ is $\tr n:=<n(\delta_{e}),\delta_{e}>$. 
\item[2)] Let $(H, \pi)$ be a $M-$Hilbert module. Let $\zeta\in H$ be a vector of unit norm. Then $x\mapsto <\pi(x)\zeta,\zeta>$ is a normal positive linear functional of norm $1$, called the vector state associated to $(\pi,H,\zeta)$.  
\end{example}

Traces and states satisfy Cauchy-Schwartz inequality:
\begin{eqnarray}
 |\varphi(y^*x)|^2\leq \varphi(x^*x)\varphi(y^*y) &\text{ on } & \VN_{2}^{\varphi}=\{x\in\VN:\, \varphi(x^*x)<+\infty\}\,.
\end{eqnarray}
Therefore, the left kernel $L_{\varphi}=\{x\in\VN:\,\varphi(x^*x)=0\}$ of a trace or a state is a linear space. One says that $\varphi$ is faithful if $L_{\varphi}=0$. 
Define a scalar product on $\VN_{2}^{\varphi}/L_{\varphi}$ (which is equal to $\VN/\Ker(\varphi)$ if $\varphi$ is a state) by $(\zeta_{x},\zeta_{y}):=\varphi(y^*x)$ with $x\mapsto \zeta_{x}$ be the quotient map.

 The GNS (Gelfand-Naimark-Segal) construction is  obtained by completion of this pre-Hilbert space (see \cite{Ped} 3.3).
 Recall that a representation $(\pi,H,\zeta)$ of $\VN$ is said to be cyclic if $\pi(\VN)\zeta$ is dense in $H$. 
 \begin{lemma}\label{GNS}
 \item[1)] Let $\varphi$ be a positive linear functional on $\VN$. There exists a  cyclic representation $(\pi_{\varphi}, H_{\varphi},\zeta_{\varphi})$ such that 
 $<\pi_{\varphi}(x)\zeta_{\varphi},\zeta_{\varphi}>=\varphi(x)$. 
  \item[2)] Let $\varphi'$ be a positive linear functional such that $\varphi'\leq \varphi $, then there exists a unique $a\in \pi_{\varphi}(\VN)'$, $0\leq a\leq 1$, such that $\varphi'(x)=(\pi_{\varphi(x)}a\zeta_{\varphi},\zeta_{\varphi})$. 
\item[3)] Hence $(\pi_{\varphi'},H_{\varphi'}, \zeta_{\varphi'})$ is a subrepresentation of $(\pi_{\varphi}, H_{\varphi}, \zeta_{\varphi})$.  
\item[4)] Let $\varphi$ be the vector state associated to $(\pi, H, \zeta)$, then $\VN/\Ker(\varphi)\ni x\mapsto x\zeta\in H$ defines a $\VN-$linear isometry between $(\pi_{\varphi},H_{\varphi},\zeta_{\varphi})$ and $(\pi, \Adh{\pi(\VN)\zeta}^{H}, \zeta)$. 
\item[5)] Assume that $\varphi$ is a faithful state, then for any state $\varphi'$ on $\VN$, $(\pi_{\varphi'},\, H_{\varphi'},\, \zeta_{\varphi'})$ is a sub- representation of $(\pi_{\varphi}, H_{\varphi},\zeta_{\varphi})$. 
\end{lemma}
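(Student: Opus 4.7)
The plan is to build the cyclic representation explicitly from Cauchy–Schwarz and then obtain parts (2)--(5) by successive refinements. For (1), I first note that Cauchy–Schwarz forces $L_\varphi$ to be a left ideal: linearity follows from $(x+y)^*(x+y) \leq 2(x^*x + y^*y)$, and the $C^*$-inequality $x^* a^* a x \leq \|a\|^2 x^* x$ gives closure under left multiplication. The prescription $(\zeta_x, \zeta_y) := \varphi(y^* x)$ then descends to a definite inner product on $A_2^\varphi / L_\varphi$, whose Hilbert completion I call $H_\varphi$. Left multiplication $\pi_\varphi(a)\zeta_x := \zeta_{ax}$ is well defined on the quotient, and the same $C^*$-inequality gives $\|\pi_\varphi(a)\zeta_x\|^2 = \varphi(x^*a^*ax) \leq \|a\|^2 \|\zeta_x\|^2$, so it extends to a $*$-representation of $\A$ on $H_\varphi$. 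Setting $\zeta_\varphi := \zeta_1$ produces a cyclic vector with $\langle \pi_\varphi(x)\zeta_\varphi, \zeta_\varphi\rangle = \varphi(x)$.

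For (2), I introduce the sesquilinear form $B(\zeta_x, \zeta_y) := \varphi'(y^* x)$ on the dense subspace. The hypothesis $\varphi' \leq \varphi$ together with Cauchy–Schwarz for $\varphi'$ yields $|B(\zeta_x, \zeta_y)|^2 \leq \varphi(x^*x)\varphi(y^*y) = \|\zeta_x\|^2 \|\zeta_y\|^2$, so $B$ extends continuously to $H_\varphi$ and is represented by a unique operator $a$ with $0 \leq a \leq 1$. The commutation $a \in \pi_\varphi(\A)'$ is then a direct check: $(a\pi_\varphi(z)\zeta_x, \zeta_y) = \varphi'(y^* z x) = (a\zeta_x, \pi_\varphi(z^*)\zeta_y) = (\pi_\varphi(z) a\zeta_x, \zeta_y)$ on the dense image, then by continuity. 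For (3), extract the positive square root $a^{1/2} \in \pi_\varphi(\A)'$, set $\zeta := a^{1/2}\zeta_\varphi$, and compute $\varphi'(x) = \langle \pi_\varphi(x) a^{1/2}\zeta_\varphi, a^{1/2}\zeta_\varphi\rangle$; the cyclic subrepresentation $\overline{\pi_\varphi(\A)\zeta} \subseteq H_\varphi$ then realises $(\pi_{\varphi'}, H_{\varphi'}, \zeta_{\varphi'})$ via (4). Part (4) itself is the uniqueness argument: the map $\zeta_x \mapsto \pi(x)\zeta$ is isometric since $(\pi(x)\zeta, \pi(y)\zeta) = \varphi(y^* x) = (\zeta_x, \zeta_y)$, and it extends by continuity to the desired $\A$-linear unitary onto $\overline{\pi(\A)\zeta}$, matching cyclic vectors.

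The main obstacle is (5): a faithful state need not dominate an arbitrary state, so parts (2)--(3) do not apply verbatim. The statement must be read in the context relevant to the paper, namely a finite von Neumann algebra with a faithful normal tracial state, with $\A = N(G)$ and $\varphi = \tr$ in mind. I would then exploit the standard form $H_\tr = L^2(\A, \tr)$: for any normal state $\varphi'$, the non-commutative Radon–Nikodym theorem provides a positive density $\rho \in L^1(\A, \tr)$ with $\tr(\rho) = 1$ whose square root lies in $L^2(\A, \tr)$, and $\varphi'(x) = \langle x \rho^{1/2}, \rho^{1/2}\rangle_\tr$. The closed cyclic subspace $\overline{\pi_\tr(\A)\rho^{1/2}} \subseteq H_\tr$ then carries a subrepresentation unitarily equivalent to $(\pi_{\varphi'}, H_{\varphi'}, \zeta_{\varphi'})$ by the uniqueness in (4). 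The subtle steps are the reduction to normal states and the $L^2$-integrability of $\rho^{1/2}$; this rests on the Dixmier/Tomita–Takesaki non-commutative $L^p$-theory, which is precisely the analytic toolkit already used in the paper to manipulate $N(G)$-modules.
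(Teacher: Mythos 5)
Your parts (1)--(4) follow the paper's argument essentially verbatim (explicit GNS construction, the bounded form $B$ represented by $0\leq a\leq 1$ in the commutant, the square root $a^{1/2}$ giving the isometric intertwiner, and the uniqueness statement (4)), so there is nothing to flag there. For (5) you diverge: you correctly observe that a faithful $\varphi$ need not dominate $\varphi'$, but you then conclude that (2)--(3) are unusable and reach for the standard form and the noncommutative Radon--Nikodym theorem, restricting to normal states on a finite von Neumann algebra. The paper's own argument is a one-line elementary device that you missed: apply (2)--(3) to the \emph{sum} $\varphi+\varphi'$, which dominates both $\varphi$ and $\varphi'$. This embeds $(\pi_{\varphi},H_{\varphi},\zeta_{\varphi})$ isometrically into $(\pi_{\varphi+\varphi'},H_{\varphi+\varphi'},\zeta_{\varphi+\varphi'})$; faithfulness of $\varphi$ is invoked to see that this embedding has dense range, hence is a unitary equivalence, and since $\pi_{\varphi'}$ is a subrepresentation of $\pi_{\varphi+\varphi'}$ by (3), it is one of $\pi_{\varphi}$. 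Your route buys rigour at the point where the paper is actually thin --- the ``dense range'' step only gives injectivity of the representing operator $a$ on the dense subspace $\{\zeta_{x}\}$, not on all of $H_{\varphi+\varphi'}$, and indeed for a singular (non-normal) $\varphi'$ the claim as literally stated fails (a subrepresentation of a normal representation is normal); restricting to normal states and using $\varphi'=\langle\,\cdot\,\rho^{1/2},\rho^{1/2}\rangle_{t}$ on $l^{2}(\A,t)$ closes that gap and loses nothing for the paper's only application (the $T$-lemma uses a vector state plus the trace, which is normal). The cost is that you import $L^{p}$-machinery where the paper intends an elementary consequence of (2)--(4), and you prove a narrower statement than the one asserted. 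Ideally you would combine the two: run the paper's $\varphi+\varphi'$ trick, and justify the density of the range by normality of the states involved.
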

\subsubsection{The standard form }\label{standarde}
The unitary invariance of  a trace  $t$ implies that $t(xy)=t(yx)$ on $\VN_{2}^{t}$. This gives further properties to the associated GNS space:
Let $(\VN,t)$ be a von Neumann algebra with a normal faithful tracial state (a continuous positive linear form $\varphi$ such that $\varphi(1)=1$ and $\varphi$ is a normal trace). Then $(x,y)\mapsto t(y^*x)=(x,y)_{t}$ is a scalar product.
\begin{definition}
\item[1)] Let $(\pi_{t}, l^{2}(\VN,t), \zeta_{t})$  be the Hilbertian representation obtained through the GNS construction. It is called the standard form associated to $(\VN,t)$. 
\item[2)] The isometric densely defined operator $\VN\ni x\to x^{*}\in\VN$ extends to   $J:L^2(\VN,t)\to L^{2}(\VN,t)$ which is conjugate linear, isometric and involutive.
\item[3)] If $x\in \VN$, the maps $\lambda(x):y\mapsto xy$ and $\rho(x): y\mapsto yx$, defined on $\VN$, extend uniquely as $\rho(x),\lambda(x) \in \B(   l^{2}(\VN,t))$.
\end{definition}
A vector $\zeta\in l^{2}(\VN,t)$ defines two closed, densely defined, unbounded operators:

\begin{eqnarray*}\lambda(\zeta): D(\lambda(\zeta))=\VN\zeta_{t} &\ni & x \zeta_{t}\mapsto \rho(x)\zeta \in l^{2}(\VN,t)\\
\rho(\zeta): D(\rho(\zeta))=\VN\zeta_{t} &\ni & x\zeta_{t}\mapsto \lambda(x)\zeta\in l^{2}(\VN,t) 
\end{eqnarray*}

\begin{lemma}\label{standard}
\item[1)]
$\lambda(\VN)$ and $\rho(\VN)$ are von Neumann subalgebras on $l^{2}(\VN,t)$ such that $J\lambda(\VN)J=\rho(\VN)$ and $\lambda(\VN)'=\rho(\VN)$. 
\item[2)] Let $\zeta\in l^{2}(\VN,t)$. Then $\rho(\zeta)$   is bounded iff $\lambda(\zeta)$   is bounded iff $\zeta\in \VN\zeta_{t}$.
\end{lemma}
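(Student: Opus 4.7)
The plan is to build the left and right multiplication operators on $l^2(\A,t)$, introduce the modular conjugation $J$, and exploit the tracial hypothesis to obtain the commutant relation; part (2) will then follow from the symmetry implemented by $J$.

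First, for $x\in \A$ the maps $L_x := \lambda(x\zeta_t)$ and $R_x := \rho(\zeta_t x)$ extend from $\A\zeta_t$ to bounded operators on $l^2(\A,t)$ via the inequality $\|xy\|_t^2 = t(y^*x^*xy)\leq \|x\|^2 \|y\|_t^2$. One checks that $\lambda: \A\to \B(l^2(\A,t))$ and $\rho: \A^{op}\to \B(l^2(\A,t))$ are normal faithful unital $*$-representations (faithfulness comes from the faithfulness of $t$), so $\U(\A)=\{L_x:x\in\A\}$ is a von Neumann subalgebra isomorphic to $\A$. Next I would verify that the densely defined map $x\zeta_t\mapsto x^*\zeta_t$ is antilinear and isometric thanks to the tracial identity $t(xx^*)=t(x^*x)$, and extend it by continuity to the antilinear involutive isometry $J$. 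A direct computation on $\A\zeta_t$ gives the key intertwining $JL_xJ=R_{x^*}$, so that $J\U(\A)J=\V(\A)$. In particular $\V(\A)$ is a von Neumann subalgebra of $\B(l^2(\A,t))$, and $\zeta_t$ (fixed by $J$) is cyclic and separating for both $\U(\A)$ and $\V(\A)$.

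The inclusion $\V(\A)\subseteq \U(\A)'$ is then immediate from associativity: $L_x R_y = R_y L_x$ holds on $\A\zeta_t$ and extends by continuity. The main obstacle is the reverse inclusion $\U(\A)'\subseteq \V(\A)$. Given $T\in\U(\A)'$, set $\eta:=T\zeta_t$; then $T(x\zeta_t)=L_xT\zeta_t=L_x\eta$, and the estimate $\|L_x\eta\|_t=\|TL_x\zeta_t\|_t\leq\|T\|\,\|x\zeta_t\|_t$ shows that the densely defined operator $\rho(\eta)$ is bounded with closure equal to $T$. The remaining point is to identify $\eta$ with some $\zeta_t y$, $y\in \A$, so that $T=R_y\in\V(\A)$. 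I would handle this by decomposing $T$ into real and imaginary parts (both in $\U(\A)'$), then by Borel functional calculus applied inside $\U(\A)'$ to reduce to the positive case, and finally approximating $T$ in the weak operator topology by elements of $\V(\A)$, using cyclicity and separating character of $\zeta_t$ for $\V(\A)=J\U(\A)J$. This is where the tracial hypothesis is essential: in the non-tracial case $J$ must be modified by the modular operator, and the corresponding commutant identity is precisely Tomita--Takesaki theory.

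For part (2), I would establish two operator identities, first on the dense subspace $\A\zeta_t$ and then by extension: $\lambda(\zeta)^* = \lambda(J\zeta)$, which follows from $L_x^* = L_{x^*}$ together with $J(x\zeta_t)=x^*\zeta_t$, and $\rho(\zeta)=J\lambda(J\zeta)J$, which follows from the intertwining $JL_{x^*}J=R_x$ of the previous step. Since $J$ is an antilinear isometry, the second identity shows $\rho(\zeta)$ is bounded if and only if $\lambda(J\zeta)$ is, and since a densely defined closable operator is bounded exactly when its adjoint is, the first identity shows $\lambda(\zeta)$ is bounded if and only if $\lambda(J\zeta)$ is. Composing the two equivalences yields that $\rho(\zeta)$ is bounded if and only if $\lambda(\zeta)$ is bounded.
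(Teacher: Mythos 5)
The paper does not actually prove this lemma: both parts are dispatched by a citation to Dixmier (I.5.2, I.5.3 and I.6.2, Th.~2), so you are supplying an argument the author chose to outsource. Your construction of $L_x$, $R_x$ and $J$, the identity $JL_xJ=R_{x^*}$, the easy inclusion $\V(\A)\subseteq\U(\A)'$, and the whole of part (2) (via $\lambda(\zeta)^*\supseteq\lambda(J\zeta)$ and $\rho(\zeta)=J\lambda(J\zeta)J$ on the dense domain $\A\zeta_t$) are correct and are exactly the standard route.

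The genuine gap is in the reverse inclusion $\U(\A)'\subseteq\V(\A)$, which you rightly identify as the main obstacle but do not actually close. Having written $T=\Adh{\rho(\eta)}$ with $\eta=T\zeta_t$, you propose to reduce to $T\geq 0$ and then ``approximate $T$ in the weak operator topology by elements of $\V(\A)$.'' The natural candidates are $R_{x_n}$ with $\zeta_{x_n}\to\eta$ in $l^2(\A,t)$ (such $x_n$ exist by cyclicity), and indeed $\langle R_{x_n}\zeta_a,\zeta_b\rangle\to\langle T\zeta_a,\zeta_b\rangle$ on the dense subspace $\A\zeta_t$; but to upgrade this to WOT convergence, hence to $T\in\V(\A)$ by WOT-closedness, you need $\sup_n\|R_{x_n}\|=\sup_n\|x_n\|_{\A}<\infty$, and $L^2$-convergence of $\zeta_{x_n}$ gives no such bound. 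Supplying that bound is precisely the content of the commutation theorem: one must either truncate spectrally (replace $\eta$ by something like $u(1+p)^{-1}p$ where $T=up$ is the polar decomposition --- the very device the paper uses in its $T$-lemma \ref{T-lemma}), or invoke Dixmier's theory of bounded elements of a Hilbert algebra, or a Kaplansky-type density argument. As written, the decomposition into real/imaginary parts and the functional calculus do not by themselves produce the norm control, so this step needs to be completed (or, as the paper does, referred to \Dix).
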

\begin{proof} see \cite{Takesaki} Chap. 5 th.2.22 (p. 324) and Lemma 2.21
\end{proof}

 \begin{example}
Let $G$ be a discrete  group, let $N_{l}(G)$, $N_{r}(G)$ left (resp. right) von Neumann algebra generated by the left (resp. right) translations. Then the standard form of $(N_{l}(G),tr)$ is $(\lambda, l^{2}(G), e)$ and $N_{l}(G)'=N_{r}(G)$ and $N_{r}(G)'= N_{l}(G)$ (\cite{Dix} I.5.2).
\end{example}

\subsubsection{The Murray-Von Neumann dimension}

Let $\VN'$ be the commutant of $\VN$ acting on $l^{2}(\VN)$.
Let $\VN$ act on $l^{2}(\VN)\otimes l^{2}(\NN)$ through the faithful representation $x\to x\otimes 1$. It is called amplification (\cite{Takesaki} p. 184).
The structure theorem of normal morphisms between Von Neuman algebras (\cite{Dix}) implies:
\begin{lemma}Let $(H,\pi)$ be a separable $\VN-$module. Then there exists a $\VN-$isometry\\ $u: H\to l^{2}(\VN)\otimes l^{2}(\NN)$ such that $ux=(x\otimes 1)u$.
\end{lemma}
Let $p$ be the orthogonal projection on $u(H)$. Then $p$ belongs to the commutant of $\VN$ acting on $l^{2}(\VN)\otimes l^{2}(\NN)$ which is equal to $\VN'\bar\otimes \B(l^{2}(\NN))$: $p=(p_{ij})$ decomposes as a matrix  such that $p_{ij}\in\VN'$.

\begin{definition} The trace of $p$ is defined by $\Tr(p)=\sum_{i\in \NN}\tau(p_{ii})$. It does not depend on the embedding $u$. It is called the Von Neumann dimension of $H$, denoted $dim_{(\VN,\tau)}H$ or $dim_{\VN}H$ if $\tau$ is understood.
\end{definition}
\begin{definition}\label{fredholm}\cite{Luc, Shu}  A morphism $h:H_{1}\to H_{2}$ between $\VN-$Hilbert modules is called $\VN-$Fredholm if $\dim_{\VN}\Ker(h)<+\infty$ and there exists $L\subset \Ran(h)$ such that $\dim_{\VN}H_{2}\ominus L<+\infty$.
\end{definition}
Then (loc. cit.) $h$ is a $\VN-$Fredholm morphism iff $\dim_{\VN}\Ker (h^{*})<+\infty$ and if $h^{*}h=\int\lambda dE_{\lambda}$ is the spectral decomposition of $h^*h$ then there exists $\lambda>0$ such that $Tr_{\VN}E_{\lambda}<+\infty$.

\begin{definition}(Murray-Von Neumann \cite{MurVon}  Chap. XVI)
\item[1)]
Let $\VN$ be a finite Von Neumann algebra on $H$. 
 A closed densely defined operator $h: \Dom(h)\to H$ is said to be affiliated to $\VN$ if it commutes with $\VN'$: for all unitary $u\in \VN'$, $u\Dom(h)=\Dom(h)$ and $uh=hu$.
 \item[2)] \cite{MurVon, Shu}
 A linear subspace $L$ in $H$ is said to be essentially dense if for any $\epsilon>0$, there exists a closed $\VN-$Hilbert submodule $L_{\epsilon}$ contained in $L$ such that $\dim_{\VN}(H\ominus L_{\epsilon})\leq \epsilon$.
\end{definition}

We have the following important properties:
\begin{lemma}\cite{MurVon, Shu}\label{essentially dense} Let $\VN$ be a finite Von Neumann algebra.
\item[1)]
Let $h:H_{1}\to H_{2}$ be a morphism of $\VN-$Hilbert modules. Let $L$ be a linear subspace of $H_{2}$ which is essentially dense. Then $h^{-1}(L)$ is essentially dense in $H_{1}$.
\item[2)] Assume that $h$ is $\VN-$Fredholm. Then $\Ran(h)$ is essentially dense in $\Adh{\Ran (h)}$.
\item[3)] Assume that $h: H_{1}\to H_{2}$ is a $\VN-$Fredholm weak isomophism. Then for any closed $\VN-$subspace $F$ in $H_{2}$, $h_{|h^{-1}(F)}:h^{-1}(F)\to F$ is a weak isomorphism.
\end{lemma}
The closed densely defined unbounded operators $\rho(x)$ with $x\in l^{2}(M,t)$ are examples of operators affiliated to $\rho(M)$.
The bi-commutant theorem implies that $h$ is affiliated to $\VN$ iff $f(h)\in \VN$ for every bounded Borel function on $Spec (h)$ (see \cite{Ped} 5.3.10). In particular, if $h\geq 0$ then $h$ is affiliated to $\VN$ iff $(1+\epsilon h)^{-1}h\in \VN$ for some $\epsilon>0$.

An essential property of affiliated operators to $(\VN,t)$, a finite von Neumann algebra, is that they form an algebra (a property valid for more general semi-finite von Neumann algebra, see \cite{Luc} Chap. 8, \cite{MurVon} Chap. XVI, \cite{Shu}): The spectral theorem implies that the domain of an operator affiliated to $\VN$ is essentially dense and the above lemma proves that intersection of two essentially dense subspaces is an essentially dense subspace.

The following lemma is a variation on the description of $\Adh{\pi(\VN)x}$ in term of affiliated operators as in \cite{MurVon}  9.2,  developped as Radon-Nykodim theorems in \cite{Dye}, \cite{Ped} 5.3:
\begin{lemma}\label{T-lemma}Let $(\VN,t)$ be a finite von Neumann algebra. 
Let $\pi:\VN\to\B(H)$ be a faithful  represention of $\VN$ as a von Neumann subalgebra of $\B(H)$. Let $\Op$ be an operator in the commutant  $\pi(\VN)'$ of $\pi(\VN)$ which is $\VN-$Fredholm.
For any $\zeta\in \Adh{\Ran (\Op)}$, there exists $r\in \VN$ such that
 $\pi_{t}(r)$(\ref{standarde}) is injective with dense range and $\pi(r)\zeta\in \Ran(\Op)$.     
\end{lemma}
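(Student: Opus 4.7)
My plan is to first reduce to the case $T \geq 0$ via polar decomposition. Writing $T = U|T|$ with $U \in \pi(\A)'$ a partial isometry and $|T|\geq 0$, the identity $\pi(r)U = U\pi(r)$ for $r\in\A$, together with the equalities $\overline{\mathrm{Im}\, T} = U(\overline{\mathrm{Im}\,|T|})$ and $\mathrm{Im}\, T = U(\mathrm{Im}\,|T|)$, reduces the problem to finding $r\in\A$ with $\lambda(r)$ injective and $\pi(r)\zeta'\in\mathrm{Im}\,|T|$, where $\zeta' := U^*\zeta\in\overline{\mathrm{Im}\,|T|}$. Spectrally resolving $|T| = \int_0^\infty \lambda\,dE_\lambda$ with $E_\lambda\in\pi(\A)'$, the hypothesis $\zeta'\in\overline{\mathrm{Im}\,|T|}$ becomes $E_{\{0\}}\zeta' = 0$, and the desired conclusion $\pi(r)\zeta'\in\mathrm{Im}\,|T|$ is equivalent to $\int_0^\infty \lambda^{-2}\,d\|E_\lambda\pi(r)\zeta'\|^2 < \infty$, which by commutation of $\pi(r^*r)$ with each $E_\lambda$ equals $\int_0^\infty \lambda^{-2}\,d\langle \pi(r^*r)E_\lambda\zeta',\zeta'\rangle$.

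The essential step is to pass to the standard form via Lemma \ref{GNS}. Applied to the vector state $\omega_{\zeta'}(x) = \langle\pi(x)\zeta',\zeta'\rangle$, part (4) identifies the cyclic subspace $K_0 = \overline{\pi(\A)\zeta'}$ as the GNS $\A$-representation, and part (5) embeds it $\A$-equivariantly and isometrically into the standard form $l^2(\A,t)$ via an intertwiner $V$ sending $\zeta'$ to some $\xi_0\in l^2(\A,t)$. By Lemma \ref{standard} and the Murray--von Neumann interpretation of $l^2(\A,t)$ as noncommutative $L^2$ of affiliated operators, $\xi_0$ determines a closed densely defined positive operator $|\rho(\xi_0)|$ affiliated to $\A'$, and the spectral projections of $|\rho(\xi_0)|$ (conjugated through the modular $J$) sit inside $\A$ as a sequence $q_n\in\A$ with $q_n\uparrow 1$ strongly and $\lambda(q_n)\xi_0\in\A\zeta_t$ bounded.

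I would then choose $r$ as a strictly positive convex combination $r = \sum_{n\geq 1} c_n q_n$ with coefficients $c_n > 0$ picked recursively so that the spectral integral $\int_0^\infty \lambda^{-2}\,d\langle\pi(r^*r)E_\lambda\zeta',\zeta'\rangle$ is finite: the boundedness of each $\pi(q_n)\zeta'$ (transferred through $V$ from $\lambda(q_n)\xi_0\in\A\zeta_t$) permits a tail estimate at each dyadic scale $\lambda\in[1/(n+1),1/n)$, and the $c_n$ are taken small enough to sum across scales. Strict positivity of $r$ then ensures $\lambda(r)$ injective, and in a finite von Neumann algebra injectivity of $\lambda(r)$ is equivalent to density of its image, via Murray--von Neumann equivalence of the kernel projections of $\lambda(r)$ and $\lambda(r)^* = \lambda(r^*)$; hence $\lambda(r)$ has dense range as required.

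The hard part is the interplay between the two habitats of spectral data: the resolution of $|T|$ lives in $\pi(\A)'$, while the truncations $q_n$ controlling the size of $\xi_0$ must lie in $\A$. The cyclic vector $\zeta'$ together with the standard-form embedding $V$ is the bridge; the modular conjugation $J$ of Lemma \ref{standard} is what allows the unbounded affiliated operator $|\rho(\xi_0)|$ (naturally affiliated to $\A'$) to yield truncation projections in $\A$, and thereby produce an $r\in\A$ responsive to the spectral geometry of $T\in\pi(\A)'$.
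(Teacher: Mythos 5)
Your setup is sound up to a point: the reduction to $T=U|T|\geq 0$ via the partial isometry $U\in\pi(\A)'$, the reformulation of $\pi(r)\zeta'\in Im\,|T|$ as finiteness of $\int_0^\infty\lambda^{-2}\,d\|E_\lambda\pi(r)\zeta'\|^2$, and the GNS embedding of $\Adh{\pi(\A)\zeta'}$ into the standard form all parallel the paper's part (2). The gap is in the construction of $r$. Your projections $q_n$ are extracted from the affiliated operator determined by $\xi_0=V\zeta'$ alone, so the resulting $r=\sum_n c_nq_n$ depends only on $\zeta'$ and never sees $T$. The claimed ``tail estimate at each dyadic scale'' has no basis: boundedness of the vector $\pi(q_n)\zeta'$ means $\|\pi(x)\pi(q_n)\zeta'\|\leq C_n\,t(x^*x)^{1/2}$ for $x\in\A$, which is a statement about the $\A$--module structure, whereas $E_\lambda$ lies in the commutant $\pi(\A)'$; the only available bound is $\|E_\lambda\pi(q_n)\zeta'\|=\|\pi(q_n)E_\lambda\zeta'\|\leq\|E_\lambda\zeta'\|$, whose rate of decay as $\lambda\to 0$ is not controlled by the hypothesis $\zeta'\in\Adh{Im\,|T|}$. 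A concrete counterexample to any $T$--independent choice of $r$: take $\A=\A'=L^\infty[0,1]$ with Lebesgue trace acting on $L^2[0,1]$, $T$ multiplication by $x$, and $\zeta'=1$. Then $\zeta'$ is already a bounded vector, so your $q_n$ are all the identity and $r$ is a positive scalar, yet $r\cdot 1\notin Im\,T$ since $x^{-1}\notin L^2$.

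The missing idea is exactly the coupling of $\zeta$ to $T$ that the paper achieves in part (1) of its proof: form the closed densely defined operator $T^{-1}\circ\rho(\zeta)$, which is affiliated to $\A'$ in the standard form, take its polar decomposition $up$, and observe that the bounded resolvent $(1+p)^{-1}$ lies in $\A'=\rho(\A)$ (Lemma \ref{standard}), hence equals $\rho(a)$ for some $a\in\A$ which is injective with dense range; the identity $up=[u(1+p)^{-1}p](1+p)$ then gives $\rho(\zeta)\circ\rho(a)=T\circ\rho(y)$ and thus puts the image of $\zeta$ under $a$ into $Im\,T$. In your commutative test case this produces $r=x/(1+x)$, i.e.\ a function vanishing at $0$ at the rate dictated by the spectral measure of $\zeta'$ with respect to $|T|$ --- precisely the $T$--dependence your construction lacks. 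If you want to keep your spectral-integral formulation, you must build $r$ from $(1+p)^{-1}$ (or from spectral truncations of $p=|T^{-1}\rho(\zeta)|$), not from truncations of $\xi_0$.
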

\begin{proof}
\item[1)] First, assume that $H=l^{2}(\VN,t)$ and that $\rho(\zeta),\,\Op\in \VN'$ have dense ranges. Then $\Op=\rho(r)$ with $r\in \VN$ (\ref{standard}). Let $\Op^{-1}\circ \rho(\zeta)=up$ be the polar decomposition of the operator $\Op^{-1}\circ\rho(\zeta)$ affiliated to $\VN'$. Then $u$ is an isometry, $p$ is positive, $u$, $p$ are affiliated to $\VN'$. But $up=[up(1+p)^{-1}](1+p)$. Note that $1+p\geq 1$ hence $(1+p)^{-1},\,p(1+p)^{-1}$ belong to $\VN'$. From (\ref{standard}), there exists $a,y\in \VN$, such that $(1+p)^{-1}=\rho(a)$ and $\rho(y)=up(1+p)^{-1}$. Then $\rho(\zeta)\circ\rho(a)=  \Op\circ \rho(y)$. But $\rho(\zeta)\circ \rho(a)=\rho(a.\zeta)$ and $\Op\circ \rho(y)=\rho(\Op(y))$ (identity is valid on the dense subset $\VN$). Then $\Ran(\rho(a))=\Dom(1+p)$ is dense.
\item[2)] In general, let $\zeta\in \Adh{\Ran (\Op)}$. Then $\Adh{\pi(M)\zeta}\subset \Adh{\Ran (\Op)}$. Let $\Op_{\perp}:\Ker( \Op)^{\perp}\to \Adh{\Ran \Op}$ and \\ $\Op_{1}=\Op_{\perp}\oplus id_{l^{2}(\VN,t)}: \Ker( \Op)^{\perp}\oplus l^{2}(\VN,t)\to \Adh{\Ran (\Op)}\oplus l^{2}(\VN,t)$. Set $\pi_{1}=\pi\oplus\lambda$. Lemma \ref{essentially dense} implies that $$\Op_{1}: {\Op_{1}}^{-1}\left(\Adh{ \pi_{1}(\VN)(\zeta\oplus 1) }\right)\to \Adh{ \pi_{1}(\VN)(\zeta\oplus 1) }$$ is a $\VN-$linear weak isomorphism, for $\Op_{1}$ is a $\VN-$Fredholm weak isomorphism.

Moreover the  vector state $x\to <\pi(x)\zeta,\zeta>+t(x)$ associated to $(\zeta,1)$ dominates $t$. 
Using the GNS construction \ref{GNS} (5), we deduce that 
there exists $\VN-$isomorphisms\\  $U_{1}:{\Op_{1}}^{-1}\left(\Adh{\pi_{1}(\VN)(\zeta\oplus 1)}\right) \to l^{2}(\VN,t)$ and  $U_{2}:\Adh{\pi_{1}(\VN)(\zeta\oplus 1)}\to l^{2}(\VN,t)$. Setting $\tilde\zeta=U_{2}(\zeta,1)$ and $\tilde \Op=U_{2} \circ \Op_{1}\circ U_{1}^{-1}$, we are reduced to the first case of the proof.
\end{proof}

From the proof, one sees that if $x\in l^{2}(G)$,  then the conductor of the affiliated operator $\rho(x)$ to $N_{r}(G)$ is non trivial.
For further reference, we give the following example of weak isomorphism:
\begin{lemma}\label{example of weak isomorphism} Let $G$ be an infinite discrete  group and let $\mu\in l^{1}(G,\RR)$ be a probability measure on $G$ such that the subgroup generated by the support of $\mu$ is equal to $G$. Then convolution with $1-\mu$ defines a weak isomorphism on $l^{2}(G)$.
\end{lemma}
\begin{proof} The hypothesis implies that the only $\mu-$harmonic function in $l^{2}(G)$ is the null function (see Woess \cite{Woe} p. 159). 
\end{proof}

\subsection{$G-$Hilbert Modules and von Neumann dimension.}
Let $G$ be a discrete  group.
A (left) $G-$Hilbert module is a Hilbert space $V$ with a unitary action $U(.)$ of $G$ such that $V$ is $G-$isometric to a $G-$invariant subspace of the free Hilbert $G-$module $H\otimes l^{2}(G)$. Then the Von Neumann algebra generated by $\{U(.)\}$ is isomorphic to $N_{l}(G)$. If $\dim_{\CC}H<+\infty$, then $V$ is said to be finitely generated. Let $V$ be embedded as a closed $G-$invariant subset of $H\otimes l^{2}(G)$. Let $P\in N_{r}(G)\bar\otimes\B(H)$ be the orthogonal projection onto $V$. Then $dim_{N(G)}V=Tr P$.

\begin{example}(see \cite{Shu}~ section 3) If $\tilde E\to \tilde X$ is the pullback of a   smooth vector bundle $E\to X$ under a $G-$covering map $\tilde X\to X$, then the space 
of sections of $\tilde E$ with coefficients in a Sobolev space is a $G-$Hilbert module (see \ref{Sobolev spaces}).
\end{example}
Let  $\Mod(N(G))$ be the category of $N(G)-$modules where $N(G)$ is viewed as an abstract ring. \\
A finitely generated projective $N(G)-$module is represented by an idempotent matrix\\ $A\in M_{n}(N(G))$. Define $\dimG P:= \tr A$.  Following L\"{u}ck \cite{Luc} Chap. 6, we define the dimension function of a $N(G)-$module $M$ by 
\label{definition of dimension}
\begin{eqnarray*}
 \dimG M:=\sup\{\dimG P\,:\, P\subset M\text{ a finitely generated projective submodule}\}\,.
 \end{eqnarray*}
 
If $V$ is a Hilbert $N(G)-$module, the two dimension functions agree: Their dimension is the supremum over the finite dimensional subspaces (\cite{Luc} p. 21 th.1.12 and th. 6.24).

\subsection{Localisation at a Torsion theory}\label{torsion theory}In this section we introduce the main categorical tool:
quotient of an abelian category $\A$ by a subcategory $\B$. 
This gives the category for computating $''\Mod \B''$. Serre \cite{Ser} used it in algebraic topology to obtain isomorphisms modulo finite groups. The motivating example of isomorphism between the $l^{2}-$cohomology and the reduced $l^{2}-$cohomology up to torsion was discussed in the introduction. 

An equivalent well known construction is Verdier's localisation of the category $\A$ by a class of morphisms $\I$, so that morphisms in $\I$ become isomorphisms in $\I^{-1}\A$ (see \cite{Ver}).  

We work with the category of modules over a ring $R$. Then we speak of torsion theory and localisation at a torsion theory.

\begin{definition}A Serre subcategory $\T$ of an abelian category $\A$  is a full subcategory $\T$ of $\A$ such that for any exact sequence $0\to A\to B\to C\to 0$ in $\A$,   $B$ belongs to $\T$ iff $A$ and $C$ belongs to $\T$.
\item[2)] In the category  $\Mod(R)$ of $R-$modules over a ring $R$, a Serre class $\T$ defines a hereditary torsion theory  $\tau=(\T,\F)$ on $R$ (\Vas, \Gol):
Modules in $\T$ are torsion modules. Define the class of free modules by $\F=\{F\in \Mod(R)\,: \forall T\in\T,\, Hom_{R}(T,F)=0\}$.
\end{definition}

\begin{lemma} Let $\tau=(\T,\F)$ be a hereditary torsion theory, then
\item[1)] $\F$ is closed under submodules, direct products and extension.
\item[2)] Any $M\in\Mod(R)$ has a unique maximal $\tau-$torsion submodule, denoted $T_{\tau}(M)$.
\item[3)]\label{cogenerated by injective} A hereditary torsion theory is cogenerated by an injective module $E$: $\tau=(\T,\F)$ with $\T\ni S\iff Hom_{R}(C,E)=0$. 
\item[4)]\label{torsion functor} The functor $T_{\tau}(.): \Mod(R)\to \T$ is a left exact functor ($N\subset M$ implies that $T_{\tau}(M)\cap N=T_{\tau}(N))$.
\end{lemma}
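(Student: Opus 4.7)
I verify the four claims by repeated use of the defining property $\mathrm{Hom}_R(T,F)=0$ for $T\in\T$, $F\in\F$, combined with the closure of $\T$ under submodules, quotients, extensions, and (as needed for a hereditary torsion theory) small direct sums.

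For (1): a morphism $T\to F'\hookrightarrow F$ vanishes when composed with the inclusion into $F$, hence is zero; each component of $T\to\prod_i F_i$ is zero, so the morphism is; and for an extension $0\to F'\to F\to F''\to 0$, any $T\to F$ composes to zero in $F''$, factors through $F'$, and then vanishes. For (2), I set $T_\tau(M)$ equal to the image of the canonical map $\bigoplus_{N\subset M,\,N\in\T}N\to M$; closure of $\T$ under direct sums and quotients makes this torsion, and by construction it contains every torsion submodule of $M$, proving existence, maximality, and uniqueness. For (4), $T_\tau(N)\subset T_\tau(M)\cap N$ is immediate since $T_\tau(N)$ is a torsion submodule of $M$ lying inside $N$; conversely, $T_\tau(M)\cap N$ is a submodule of $T_\tau(M)\in\T$, hence torsion by hereditariness, and sits inside $N$, so it lies in $T_\tau(N)$.

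The essential content is (3), for which I plan to establish two preparatory facts. First, $M/T_\tau(M)\in\F$ for every $M$: given $\psi:T\to M/T_\tau(M)$ with $T\in\T$, the preimage $N\subset M$ of $\psi(T)$ is an extension of $\psi(T)\in\T$ by $T_\tau(M)\in\T$, hence lies in $\T$, hence is contained in $T_\tau(M)$ by maximality, forcing $\psi=0$. Second, $\F$ is closed under essential extensions: if $F\in\F$ is essential in $E'$, then any torsion submodule $S\subset E'$ meets $F$ in a torsion submodule of $F$ (using closure of $\T$ under submodules), which vanishes, so essentiality yields $S=0$. With these, let $\mathcal{C}$ be a set of representatives of cyclic torsion-free $R$-modules and put $E:=\prod_{M\in\mathcal{C}}E(M)$, the product of injective envelopes. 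This $E$ is injective, and by the second fact together with (1) lies in $\F$, so $\mathrm{Hom}_R(T,E)=0$ for every $T\in\T$. Conversely, if $M\notin\T$ then $M/T_\tau(M)\neq 0$; any nonzero $\bar m$ in it generates a cyclic torsion-free module $R\bar m\cong R/J$ appearing (up to isomorphism) in $\mathcal{C}$, and the inclusion $R\bar m\hookrightarrow E(R\bar m)$ extends by injectivity to $M/T_\tau(M)\to E(R\bar m)$, which composed with $M\twoheadrightarrow M/T_\tau(M)$ and $E(R\bar m)\hookrightarrow E$ yields a nonzero element of $\mathrm{Hom}_R(M,E)$.

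The main obstacle is the closure of $\F$ under essential extensions: this is exactly where closure of $\T$ under submodules (the defining feature of hereditariness) is used, and it is what ensures that injective envelopes of torsion-free modules remain torsion-free, the fact on which the construction of $E$ rests.
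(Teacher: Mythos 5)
Your proof is correct; for part (2) it coincides exactly with the paper's argument (the sum of all torsion submodules is a quotient of their direct sum, hence torsion), and for parts (1), (3), (4) the paper simply cites Golan, whereas you supply the standard arguments. Your treatment of (3) — showing $M/T_{\tau}(M)$ is torsion-free, that torsion-freeness passes to essential extensions via hereditariness, and then cogenerating with the product of injective envelopes of cyclic torsion-free modules — is the classical construction and is sound, so nothing is missing.
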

\begin{proof}\item[2)] Let $\{N_{i},\, i\in\Lambda\}$ be the set of all torsion submodules of $M$. The trivial module $\{0\}$ belongs to this set. Then $T_{\tau}(M):=\sum_{i\in\Lambda}N_{i}$ satisfies required properties for it is a homomorphic image of $\oplus_{i\in\Lambda}N_{i}$ which is torsion.
\item[3)\,, 4)] See \cite{Gol}~ p. 5 and p.  24.
\end{proof}

\begin{definition}
\item[1)]
Let $\tau_{1}$ and $\tau_{2}$ be torsion theories, then $\tau_{1}$ is smaller than $\tau_{2}$ ($\tau_{1}\leq \tau_{2}) \text{ if  }\T_{1}\subset \T_{2}$ iff $ \F_{1}\supset \F_{2}\,.$
\item[2)] Then if $\C$ is a class in $\Mod(R)$, the hereditary torsion theory generated by $\C$ is the smallest hereditary torsion theory $\tau$ such that $\C\subset\T$.
\end{definition}
\subsubsection{Quotient category. }
Let $\T$ be a Serre subcategory of an Abelian category $\A$. There exists a quotient category $\A/\T$ and an exact functor $\A\to \A/\T$ with the universal property of factorisation of functor mapping objects in $\T$ to the zero object (\Grot (1.11), Gabriel \cite{Gab} Chap. 3, Verdier \cite{Ver} \para2).
\begin{definition}Let $\tau=(\T,\F)$ be a hereditary torsion theory on $\Mod(R)$. Let $\Mod(R)_{/\tau}$ be the quotient category of $\Mod(R)$ by the Serre class $\T$: 
\item[i)] The objects of $\Mod(R)_{/\tau}$ are identical with the objects of $\Mod(R)$; \item[ii)]The morphisms of $\Mod(R)_{/\tau}$ are elements of the following inductive limit $$\varinjlim\{Hom_{R}(M',N/N')\,:M'\subset M,\, N'\subset N \text{ and } M/M',\, N'\in\T\}\,.$$ 
\end{definition}

Let $\alpha\in Hom_{N(G)}(M,N)$ and let $[\alpha]$ be its image in the quotient category. Then $[\alpha]$ is a monomorphism (resp. epimorphism, resp. isomorphism) iff $\Ker(\alpha)$ (resp. $\Cok(\alpha)$, resp. $\Ker(\alpha)$ and $\Cok(\alpha)$) is a torsion module.
\subsubsection{Remark.}\label{localisation}
Another description of $\Mod(R)_{/\tau}$ is through fractions: 
Let $\I$ be the set of morphisms in $\Mod(R)$ such that $\Ker(f)$ and $\Cok(f)$ are in $\T$. Then $\I$ is a multiplicative system and $\I^{-1}\Mod(R)$ and $\Mod(R)_{/\tau}$ are equivalent categories (\cite{Ver}, \cite{Wei} 10.3.4 and ex. 10.3.2).
Hence a morphism $[f_{1}]\in \Hom_{\Mod(R)_{/\tau} }(M,N)$ is represented by a left fraction $M\overset{i}{\ot} M'\overset{f_{1}}{\to} N$ with $i\in \I$. 

\subsubsection{Torsion theory and complexification}\label{torsion theory and complexification}

Let $R$ be a ring which is also an $\RR-$algebra then $R\otimes_{\RR}\CC$ is a $\CC-$algebra. 
If $A\in \Mod(R)$ then the $(R,\CC)-$isomorphism $(R\otimes_{\RR}\CC)\otimes_{R} A \to A\otimes_{\RR}\CC$ 
gives a structure of $R\otimes_{\RR}\CC-$module to $A\otimes_{\RR}\CC$.
%
The forgetful functor $\phi:\Mod(R\otimes_{\RR}\CC)\ni B\to B_{R}\in \Mod(R)$ is faithful and exact and has a left adjoint 
\begin{eqnarray}
(R\otimes_{\RR}\CC)\otimes \, . \; : Hom_{R}(A,E_{R}) &\simeq& Hom_{R\otimes_{\RR} \CC}((R\otimes_{\RR} \CC)\otimes_{R} A, E)
\end{eqnarray}

%
Let $J=\phi(iId_{B})$ be the automorphism in $B_{R}$ defined by multiplication by $i$ in $B$ and let $J_{\CC}=J\otimes1_{\CC}:B_{R}\otimes_{\RR}\CC\to B_{R}\otimes_{R}\CC$ be its complex linear extension. Let $\bar B$ be the complex conjugate $R\otimes_{\RR}\CC-$module associated to $B$: Its underlying abelian group is $B$, and the module structure is given by the following representation $\rho$ of $R\otimes_{\RR}\CC$ in  $End(B)$: $\rho(r\otimes c)(b)=(r\otimes\bar c).b$. 
The natural antilinear $R-$isomorphism $B\to \bar B$ defines a functor on $\Mod(R\otimes_{\RR}\CC)$. Then antilinear $R-$linear maps from $B$ to $B'$ are in bijection with $R\otimes_{\RR}\CC-$linear maps from $B$ to $\Adh{ B'}$ (or from $\Adh{B}$ to $B'$). An $R\otimes_{\RR}\CC-$isomorphism from $B$ to $\bar B$ is equivalent with a real structure $\alpha: B\simeq_{R\otimes_{\RR}\CC} A\otimes_{\RR}\CC$ on $B$ ($A\in \Mod(R)$), which in turn is equivalent to a conjugation $C$ on $B$ (an $R-$linear involution $C$ on $B_{R}$ such that $CJ=-JC$). This defines an antilinear $\RR-$isomorphism $Hom_{R\otimes_{\RR}\CC}(A\otimes\CC,\bar E)\simeq Hom_{R\otimes_{\RR}\CC}(A\otimes\CC,E)$.
\begin{lemma}\label{identification by complex conjugation}
\item[1)] Let $B\in \Mod(R\otimes_{\RR}\CC)$, then $B_{R}\otimes_{\RR}\CC\simeq_{R\otimes_{\RR}\CC} B\oplus\bar B$. 
\item[2)] Let $C$ be a conjugation on $B\in \Mod(R\otimes \CC)$. Then $C_{\CC}:=C\otimes Id_{\CC}: B_{R}\otimes_{\RR}\CC\to B_{R}\otimes_{\RR}\CC$ maps $B$ to $\bar B$. 
\end{lemma}
\begin{proof} 
\item[1)] Let $P_{\pm}=\frac{Id\mp iJ_{\CC}}{2}: B_{R}\otimes\CC\to B_{R}\otimes\CC$ be the projection onto the eigenspaces $\{J_{\CC}=\pm i Id\}$.
One checks that  $B\ni b\to P_{+}(b\otimes 1)\in \{J_{\CC}= i Id\} \text{ and } \bar B\ni b\to P_{-}(b\otimes 1) \in \{J_{\CC}= -i Id\}\, $ are $R\otimes_{\RR}\CC-$isomorphisms.
\item[2)] The complex extension $C_{\CC}$ of $C$ is an involution which anticommutes with $J$.
\end{proof}
Note that $B\ni b\to P_{+}(b\otimes 1)\in B_{R}\otimes_{\RR}\CC$ is a splitting of the natural surjection $B_{R}\otimes_{\RR}\CC\to B$.
\begin{definition}[see \cite{Gol}]\label{ring extension} Let $\gamma:R\to R\otimes_{\RR}\CC$ be the above ring extension.
\item[i)] Let $\tau=(\T,\F)$ be a torsion theory on $\Mod(R)$. Then $\gamma_{*}\tau$ is the torsion theory on $\Mod(R\otimes_{\RR}\CC)$ such that $B\in \Mod(R\otimes_{\RR}\CC)$ is $\gamma_{*}\tau-$torsion iff $B_{R}$ is $\tau-$torsion.
\item[ii)] Let $\tau_{\CC}=(\T_{\CC},\F_{\CC})$ be a torsion theory on $\Mod(R\otimes_{\RR}\CC)$. Then $\gamma^{*}\tau_{\CC}$ is the torsion theory on $\Mod(R)$ such that $A\in \Mod(R)$ is $\gamma^{*}\tau_{\CC}-$torsion iff $A\otimes_{\RR}\CC$ is $\tau_{\CC}-$torsion.
\end{definition}
One checks that if $\tau$ is cogenerated by an injective $I$ (\ref{cogenerated by injective}), then $\gamma_{*}\tau$ is cogenerated by the injective $I\otimes_{\RR} \CC$. Hence $\gamma^{*}\gamma_{*}\tau=\tau$ for $I\otimes_{\RR}\CC\simeq_{R}I^2$.
However if $\tau_{\CC}$ is a torsion theory on $\Mod(R\otimes_{\RR}\CC)$, $\gamma_{*}\gamma^{*}\tau_{\CC}$ is in general strictly smaller than  $\tau_{\CC}$: $B$ torsion does not imply $B\oplus\bar B$ torsion.
\begin{definition} Let $\tau_{\CC}=(\T_{\tau_{\CC}},\F_{\tau_{\CC}})$ be a torsion theory on $\Mod(R\otimes_{\RR}\CC)$ with torsion functor $T_{\tau_{\CC}}$ (\ref{torsion functor}). The following properties are equivalent:
\item[i)] $\gamma_{*}\gamma^{*}\tau_{\CC}=\tau_{\CC}$.
\item[ii)] There exists a torsion theory $\tau$ on $\Mod(R)$ such that $\tau_{\CC}=\gamma_{*}\tau$.
\item[iii)] $\T_{\tau_{\CC}}$ is stable by conjugation.
\item[iv)] $\forall B\in \Mod(R\otimes_{\RR}\CC)$, $T_{\tau_{\CC}}(\bar B)=\Adh{T_{\tau_{\CC}}(B)}$ (\ref{torsion theory and complexification}).

\item[ ]
A (hereditary) torsion theory on $\Mod(R\otimes_{\RR}\CC)$ is real if it satisfies one of the above properties.
\end{definition}
\begin{example}Let $\tau_{\CC}$ be any torsion theory then $\gamma_{*}\gamma^{*}\tau_{\CC}$ is a real torsion theory.
\end{example}
\begin{corollary}\item[1)]Let $(\tau,\tau_{\CC},\tau')$ be torsion theories on $(R,R\otimes_{\RR}\CC,\,R)$ such that $  \tau\leq\gamma^{*}\tau_{\CC}$ and $\tau_{\CC}\leq \gamma_{*}\tau'$ (iff $ \T_{\tau}\otimes\CC\subset\T_{\tau_{\CC}}$ and $[\T_{\tau_{\CC}}]_{R}\subset \T_{\tau'}$). There exists exact functors 
\begin{eqnarray*}
(.)\otimes\CC:\Mod(R)_{/\tau}\to \Mod(R\otimes_{\RR}\CC)_{/\tau_{\CC}}& &
(.)_{R}:\Mod(R\otimes_{\RR}\CC)_{/\tau_{\CC}}\to \Mod(R)_{/\tau'}
\end{eqnarray*}
such that the following diagrams commute:
$$
\begin{diagram}\node{\Mod(R)}\arrow{e,t}{(.)\otimes\CC}\arrow{s,l}{}\node{\Mod(R\otimes_{\RR}\CC)}\arrow{s,l}{}    \node{\Mod(R\otimes_{\RR}\CC)}\arrow{e,t}{(.)_{R}}\arrow{s,l}{}\node{\Mod(R)}\arrow{s,l}{}\\
\node{\Mod(R)_{/ \tau}}\arrow{e,t}{(.)\otimes\CC}\node{\Mod(R\otimes_{\RR}\CC) _{/ \tau_{\CC}}}  \node{\Mod(R\otimes_{\RR}\CC)_{/ \tau_{\CC}}}\arrow{e,t}{(.)_{R}}\node{\Mod(R) _{/ \tau'} }
\end{diagram}
$$

\item[2)] In particular if $\tau=\tau'$ and $\tau_{\CC}=\gamma_{*}\tau$, then $((.)\otimes\CC,\, (.)_{R})$ is a pair of adjoint functors.
\item[3)] Let $(\tau',\gamma_{*}\tau')$ be torsion theories on $(R,R\otimes_{\RR}\CC)$ such that $\tau'\leq\tau$. Then there exists an exact functor from 
$(\Mod(R)_{/\tau'},\Mod(R\otimes_{\RR}\CC )_{/\gamma_{*}\tau'})$ to  $(\Mod(R)_{/\tau},\Mod(R\otimes_{\RR}\CC )_{/\gamma_{*}\tau})$ which commutes with tensor product up to an equivalence.
\end{corollary}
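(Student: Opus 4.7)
The plan is to invoke the universal property of Gabriel quotients: an exact functor $F:\A\to\B$ that sends a Serre subcategory $\T_{1}\subset\A$ into a Serre subcategory $\T_{2}\subset\B$ descends uniquely to an exact functor $\bar F:\A/\T_{1}\to\B/\T_{2}$ commuting with the localization functors. So for each of the three assertions, the scheme is to identify the two functors (complexification and restriction of scalars), verify their exactness, and then check that they send torsion to torsion under the stated inequalities.

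For Part (1), the functor $.\otimes_{\RR}\CC$ is exact because $R\otimes\CC$ is free of rank $2$ as an $R$-module (using $\CC=\RR\oplus i\RR$), and $(.)_{R}$ is exact by inspection. By the very definition of $\gamma^{*}$ and $\gamma_{*}$ in \ref{ring extension}, the inequality $\tau\leq\gamma^{*}\tau_{\CC}$ is literally the statement $A\in\T_{\tau}\Rightarrow A\otimes\CC\in\T_{\tau_{\CC}}$, and $\tau_{\CC}\leq\gamma_{*}\tau'$ is $B\in\T_{\tau_{\CC}}\Rightarrow B_{R}\in\T_{\tau'}$. Applying Gabriel's universal property then produces both descended functors, and commutativity of the localization squares is automatic from the construction.

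For Part (2), the starting point is the classical adjunction $Hom_{R\otimes\CC}(A\otimes\CC,B)\simeq Hom_{R}(A,B_{R})$ already displayed in the ring extension discussion. To transport it to the quotient categories, I would use the explicit colimit description $Hom_{/\tau}(M,N)=\varinjlim Hom_{R}(M',N/N')$ and check that the adjunction bijection respects the directed systems on both sides: an admissible pair $(M',N')$ on one side corresponds, via $.\otimes\CC$ or $(.)_{R}$, to an admissible pair on the other, with compatible refinement maps. The hypothesis $\tau_{\CC}=\gamma_{*}\tau$ is precisely what forces $(.)_{R}$ both to preserve and to reflect torsion, so that the unit $A\to(A\otimes\CC)_{R}$ and counit $(B_{R})\otimes\CC\simeq B\oplus\bar B\to B$ descend to natural transformations in the quotients and still satisfy the triangle identities. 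For Part (3), the hypothesis $\tau'\leq\tau$ gives $\gamma_{*}\tau'\leq\gamma_{*}\tau$ (since $B\in\T_{\gamma_{*}\tau'}$ means $B_{R}\in\T_{\tau'}\subset\T_{\tau}$, hence $B\in\T_{\gamma_{*}\tau}$). Hence the identity factors through exact further-localization functors on both the real and complex sides, and commutativity with $.\otimes\CC$ follows by applying Part (1) to the two triples $(\tau',\gamma_{*}\tau',\tau')$ and $(\tau,\gamma_{*}\tau,\tau)$ and invoking uniqueness of the induced functor on the quotient.

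The main obstacle will be the coherence check in Part (2): one must track how the Hom-set adjunction interacts with the inductive colimits defining morphisms in $\Mod(R)_{/\tau}$ and $\Mod(R\otimes\CC)_{/\tau_{\CC}}$, and verify that the unit/counit descend compatibly. The splitting $B_{R}\otimes\CC\simeq B\oplus\bar B$ recorded in the lemma of the previous subsection, combined with the reality assumption $\tau_{\CC}=\gamma_{*}\tau$, is exactly the ingredient needed for the torsion flow to work symmetrically in both directions; without reality the counit would generically fail to be an isomorphism modulo $\tau_{\CC}$.
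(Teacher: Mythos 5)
Your proposal is correct and follows essentially the same route as the paper: the paper likewise verifies that the composite $\Mod(R)\to\Mod(R\otimes\CC)\to\Mod(R\otimes\CC)_{/\tau_{\CC}}$ is exact and annihilates $\T_{\tau}$ (using that $\tau\leq\gamma^{*}\tau_{\CC}$ means exactly $M\in\T_{\tau}\Rightarrow \CC\otimes M\in\T_{\tau_{\CC}}$), and then concludes by the universal property of the Gabriel quotient (citing Gabriel, p.~368, Cor.~2 and 3), treating the remaining assertions the same way. Your extra care in Part (2) with the colimit description of Hom-sets and the descent of unit and counit is a more explicit version of what the cited corollaries already provide, but it is the same underlying argument.
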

\begin{proof}The kernel of $\Mod(R)\overset{ (.)\otimes\CC}{\to} \Mod(R\otimes_{\RR}\CC) \to\Mod(R\otimes\CC)_{/\tau_{\CC}}$ is spanned by the modules $M$ such that $M\otimes_{\RR}\CC$ is $\tau_{\CC}$ torsion which follows from $M$ being $\tau$ torsion. One concludes from  \cite{Gab}~ p. 368 Cor. 2 and Cor. 3 or \cite{Fai} 15.9. 
The other assertions are proved in the same way.
\end{proof}
\begin{notations}Let $\tau$ be a (hereditary) torsion theory on $\Mod(R)$. Then $\tau\otimes\CC:=\gamma_{*}\tau$ is the real torsion theory on $\Mod(R\otimes_{\RR}\CC)$ associated to $\tau$.
\end{notations}

\begin{definition}Define a category $\R$ of modules with real structure modulo $\tau$:  
\item[]An object is a pair $(B,\alpha)$, $B\in \Mod( R\otimes_{\RR}\CC)_{/\tau\otimes\CC}$ with $\alpha:B\to  A\otimes_{\RR}\CC$ an isomorphism in $\Mod( R\otimes_{\RR}\CC)_{/\tau\otimes\CC}$.
\item[]  A morphism $f:(B,\alpha)\to (B',\alpha')$ in $\R$ is a map $f:B\to B'$ such that there exists $g:A\to A'$ with $(g\otimes 1_{\CC})\circ \alpha=\alpha' \circ f$.
\end{definition}
\begin{lemma}  \item[1)] The category $\R$ is abelian.
\item[2)] A conjugation $C$ on $B\in \Mod( R\otimes_{\RR}\CC)_{/\tau\otimes\CC}$ (an involution on $B_{R}\in \Mod(R)_{/\tau}$ which anticommutes with $(iId_{B})_{R}$) defines a real structure modulo $\tau$: $B\overset{\alpha}{\simeq} Ker(C-Id_{B_{R}})\otimes \CC$ .  It induces an isomorphism ${C_{\CC}}_{|\bar B}:\bar B \to B$ in $\Mod( R\otimes_{\RR}\CC)_{/\tau\otimes\CC}$ (see \ref{identification by complex conjugation}).
\end{lemma}
\begin{notations}\label{conjugate filtration}
Let $B'\to B$ be a subobject of a module with real structure modulo $\tau$. Then $\bar B'$ will be identified with the subobject 
$\bar B'\to \bar B\overset{{C_{\CC}}_{|\bar B}}{\to }B$.
In particular, if $F^{.}$ is a filtration on $B$, then ${\bar F}^{.}:=(\Adh{ F^{.} })$ will be called the conjugate filtration on $B$. 
\end{notations}

\subsection{Examples of torsion theories}
\subsubsection{}
From Dickson \cite{Dic} section 3, if $\C$ is a class of modules defines 
\begin{eqnarray}L(\C):=\{B\in\Mod(R):\, \forall C\in\C,\,\, Hom_{R}(B,C)=0\},\\
R(\C):=\{B\in\Mod(R):\, \forall C\in\C,\,\, Hom_{R}(C,B)=0\}.
\end{eqnarray}
The torsion theory generated by $\C$ is given by $\T_{\C}=LR(\C)$.
\subsubsection{} 
A multiplicative system $S$ is a subset in $R$ stable by multiplication. Then $\T_{S}=\{M\in \Mod(R):\, \forall m\in M,\, \exists s\in S \text{ s.t.} sm=0\}$ is a Serre class.
We will use the torsion theory generated by $ \Adh{\Ran f}^{H_{2}}/{\Ran f}$ 
with $f:H_{1}\to H_{2}$ a bounded morphism between $N(G)-$Hilbert modules. If $A\in Mod(R)$, let $$S=\{r\in N(G) \text{ with dense range}:\, \exists a\in A\text{ with } ra=0\}\subset\cup_{a\in A}(0:a)\,.$$ The properties of essentially $G-$dense subsets (see \cite{Shu} or \cite{MurVon} Chap. XVI) imply that $S$ is a multiplicative system. 
A $N(G)-$module $M$ is $\tau_{S}=(\T_{S},\F_{S})-$torsion iff $\forall m\in M,\, \exists s\in S$ s.t. $sm=0$.  Note that intersection of multiplicative systems is a multiplicative system.

\subsubsection{}

\label{definition of dim tor}

In \cite{Luc}~Chap. 6, L\"{u}ck defines a dimension function $\dimG:\Mod(N(G))\to [0,+\infty]$ which is additive on short exact sequences and which coincides with Von Neumann dimension for $N(G)-$Hilbert modules. 
Therefore $$\T_{\dim}=\{M\in\Mod(N(G)):\, \dimG M=0\}$$ is a Serre class and defines a torsion theory $\tau_{dim}$ on $N(G)$. 
This torsion theory is real for a $N(G)-$module $P$ is projective finitely generated iff $\bar P$ is (see \ref{definition of dimension}).
Standard examples of $\tau_{dim}-$torsion modules  are given in \ref{example of torsion modules}: Modules of the shape $\frac{\bar A}{A}^{H}$, with $A$ a $G-$invariant subspace of a {\it finite } $G-$dimensional $G-$Hilbert modules $H$, are zero dimensional. This follows from the normality of the dimension function (see a proof in \ref{example of torsion modules}).
\subsubsection{} 

The algebra $\U(G)$ of affiliated operators to $N(G)$ is studied in \cite{MurVon}, \cite{Luc}~ Chap. 8. and  \Rei. An affiliated opertor (to $N(G)$)  is a $G-$equivariant unbounded operator $f: dom(f)\subset l^{2}(G)\to l^{2}(G)$. Let $M\in\Mod(N(G))$. Define $T_{\U}(M):=\Ker (M\to \U(G)\otimes_{N(G)}M)$. This defines a torsion class $$\T_{\U(G)}=\{M\in \Mod(N(G)):\,\, T_{\U}(M)=M\}$$ and a torsion theory $\tau_{\U(G)}$. An element $m\in M$ belongs to $T_{\U(G)}(M)$ iff there exists an $r\in N(G)$ which is a not a divisor of zero (iff $r$ is a weak isomorphism) such that $rm=0$. Indeed $r$ becomes invertible in $\U(G)$.
According to \cite{Vas} p. 673, a module $F\in \Mod(N(G))$ is $\tau_{\U(G)}-$torsion free iff $F$ is flat.
This torsion theory is real for $F$ is flat iff $\bar F$ is.

Exemples of $\tau_{\U(G)}$ torsion modules  are given in \ref{example of torsion modules}. We present the case of a module generated by elements with infinite isotropy.
\begin{lemma}\label{infinite isotropy} Let $H$ be an infinite finitely generated subgroup of the discrete  group $G$. Then $l^{2}(G)\otimes_{\CC[G]}\CC[G/H]$ is a $\tau_{\U(G)}-$torsion module. \end{lemma}
\begin{proof} Let $\mu\in l^{1}(H,\RR)\subset l^{1}(G,\RR)$ be a probability measure with finite support generating $H$.  Lemma \ref{example of weak isomorphism} implies  $\lambda(1-\mu)$ is a weak isomorphism on $l^{2}(G,\CC)\simeq_{\CC[H]} \oplus_{[g]\in H\diagdown G}l^{2}(H)$. But $\U(G)\otimes_{N(G)}l^{2}(G)\otimes_{\CC[G]}\CC[G/H]$ is spanned by elements $r\otimes f\otimes gH$ with $r\in  \U(G)$, $f\in l^{2}(G,\CC)$ and $g\in G$. Then 
\begin{gather*} 
r\otimes f\otimes g\,H    = r\lambda(g(1-\mu))^{-1} \otimes \lambda(g(1-\mu))f\otimes g\,H 
 =  r \lambda(g(1-\mu))^{-1} \otimes f\rho((1-\check{\mu})g^{-1})\otimes g\,H \\
 =r \lambda(g(1-\mu))^{-1} \otimes f\otimes (1-\check{\mu}).e\,H =   0 
\end{gather*}
 for $1-\check{\mu}=0$ in $\CC[G/H]$.
\end{proof}

\begin{remark} Let $p:\tilde X\to X$ be a covering of a compact manifolds. We will see that if the covering group $G$ is Galois, then the torsion theories $\tau_{dim}$ and $\tau_{\U(G)}$ on $\Mod(N(G))$ give valuable informations on the $l^{2}-$cohomology groups of $p:\tilde X\setminus p^{-1}(D)\to X\setminus D$. 

On the opposite, if one considers a covering map $p:\tilde X\to X$ with trivial automorphism, then $N(G)$ is isomorphic to $\CC$ and a torsion theory is either trivial or any module is torsion. 

\end{remark}

\subsection{The sheaves of $l^{2}-$direct images}\label{Direct $l^{2}-$image}
Let $p: \tilde X\to X$ be a covering map between complex manifolds. Let $G$ be the group  of deck transformations. Let $N(G):=N(G,\CC)$ be the its left von Neumann algebra (the bi-commutant of the set of left translations acting on $l^{2}(G,\CC)$) and let $\N(G)$ (or $\N(G,\CC)$) be the sheaf of rings it defines. Let $\N(G,\RR)$ be the sheaves of rings defined by $N(G,\RR)$ (\ref{real structures}).

According to Campana-Demailly \cite{CamDem}, if $\F$ is a coherent analytic sheaf on $X$, there exists a sheaf 
$\p\F$ called $l^{2}-$direct image such that $\p(.)$ is an exact functor from the 
category of coherent analytic sheaves on $X$ to the category of sheaves on $X$(\cite{CamDem} 
prop2.6). The sheaf $\p\O_{X}$ is the sheaf associated to the presheaf $V\to \O(p^{-1}(V))\cap L^{2}(p^{-1}(V),\CC)$. We change from notations of \cite{CamDem}, our $\p\F$ is written there $\p
\tilde\F$. Campana-Demailly \cite{CamDem} Cor 2.7 prove: 
\begin{lemma}\label{local isomorphism}For any analytic coherent sheaf $\F$, the morphism  $\p\O\otimes\F\to \p\F$ is an isomorphism.
\end{lemma}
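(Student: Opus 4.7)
The plan is to reduce to the case of free coherent sheaves by a local presentation argument, then compare cokernels via the five lemma.

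First, the statement is local on $X$, so I would fix a point $x\in X$ and work in a small neighborhood. By coherence, on a sufficiently small open set $U\subset X$, the sheaf $\F$ admits a finite presentation
\begin{eqnarray*}
\O_{U}^{m}\overset{\varphi}{\longrightarrow}\O_{U}^{n}\longrightarrow \F_{|U}\longrightarrow 0\,.
\end{eqnarray*}

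Next I would apply two functors to this presentation. Since $\p$ is exact on the category of coherent analytic sheaves (the property stated just before the lemma) and is additive, applying $\p$ yields an exact sequence
\begin{eqnarray*}
(\p\O_{U})^{m}\longrightarrow(\p\O_{U})^{n}\longrightarrow\p\F_{|U}\longrightarrow 0\,.
\end{eqnarray*}
On the other hand, the functor $\p\O\otimes_{\O}-$ is right exact, so applied to the presentation it gives
\begin{eqnarray*}
\p\O_{U}\otimes\O_{U}^{m}\longrightarrow \p\O_{U}\otimes\O_{U}^{n}\longrightarrow \p\O_{U}\otimes\F_{|U}\longrightarrow 0\,,
\end{eqnarray*}
and the canonical identifications $\p\O\otimes\O^{k}\simeq(\p\O)^{k}$ turn the first two terms into the same free $\p\O$-modules as above.

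Finally, the natural morphism $\p\O\otimes\F\to\p\F$ is compatible with these presentations: it fits into a commutative diagram whose two left vertical arrows are the identity maps on $(\p\O_{U})^{m}$ and $(\p\O_{U})^{n}$. By the five lemma (or simply by the universal property of cokernels), the third vertical arrow $\p\O\otimes\F_{|U}\to\p\F_{|U}$ is an isomorphism. Since this holds on a neighborhood of every point, the morphism $\p\O\otimes\F\to\p\F$ is a global isomorphism.

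The only nonformal ingredient is the exactness of $\p$ on coherent analytic sheaves, which I would invoke as a black box from Campana--Demailly; everything else is diagram chasing. I expect no genuine obstacle: the argument is the standard trick for upgrading an isomorphism between representable functors at $\O$ to an isomorphism between right-exact functors on all coherent sheaves.
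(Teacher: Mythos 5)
Your argument is correct and is the expected one: the paper itself gives no proof of this lemma, citing it as Corollary 2.7 of Campana--Demailly, and your reduction to free sheaves via a local finite presentation --- using the exactness of $\p$ on coherent sheaves (which is exactly the Campana--Demailly Proposition 2.6 quoted just above the lemma) together with the right exactness of $\p\O\otimes_{\O}(-)$ and a cokernel comparison --- is precisely how that corollary follows from that proposition. The only point worth making explicit is why the two left vertical arrows in your diagram are identities: the comparison morphism is obtained by extension of scalars from the natural map $\F\to\p\F$, so on a free sheaf it reduces to the canonical identification $\p\O\otimes\O^{k}\simeq(\p\O)^{k}\simeq\p(\O^{k})$, and the squares commute because $\p(\varphi)$ and $\mathrm{id}\otimes\varphi$ are both given by multiplication by the pullback of the same holomorphic matrix; with that observation the diagram chase closes.
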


This isomorphism defines on  $\p{\F}$  a structure of $\N(G)-$sheaf compatible 
with the natural structure of $\ZZ[G]-$sheaf.
\begin{definition} Let $K$ be a subring of $\CC$. Let $\p K$  be the locally constant sheaf
defined by the presheaf $$U\to \{ f\in L^{2}(p^{-1}(U), \CC)\,\,, f \text{ is } K-\text{valued and locally 
constant}\}\,.$$
\end{definition}
Then  $\p(\RR)$ is a $\N(G,\RR)-$module, $\p({\CC})$ is a $\N(G,
\CC)-$module. When $p:\tilde X\to X$ is Galois, $\p \CC$ is isomorphic to $l^{2}(G,\CC)\otimes_{\ZZ[G]}p_{!}(\ZZ_{\tilde X})$ as sheaves of left $\N(G,\CC)-$modules.
\begin{definition} Let $\F\to X$ be a coherent analytic sheaf (or a constant sheaf). \\ The $l^{2}-$cohomology groups $H^{.}(X,\p\F)$ of $p:\tilde X\to X$ with values in $p^{*}\F$ are the cohomology groups of the sheaf $\p\F$ over $X$. 
\end{definition}

\begin{lemma} 
Let $D:E\to F$ be a differential operator with holomorphic coefficients between 
holomorphic vector bundles. Then there exists an operator $\p D:\p E\to \p F$ .
\end{lemma}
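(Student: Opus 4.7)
The plan is to lift $D$ to $\tilde X$ and show the lift preserves the local $L^2$ holomorphic condition that defines the direct image. Pulling back by $p$ produces a differential operator $\tilde D := p^* D : \tilde E \to \tilde F$ with holomorphic coefficients. Since $p$ is a covering of complex manifolds and the coefficients of $D$ are holomorphic on $X$, the coefficients of $\tilde D$ are $G$-invariant; consequently $\tilde D$ commutes with the deck transformation action. Once we check that $\tilde D$ sends locally $L^2$ holomorphic sections of $\tilde E$ to locally $L^2$ holomorphic sections of $\tilde F$, $G$-equivariance will immediately promote $\tilde D$ to an $\N(G)$-linear sheaf morphism $\p D : \p E \to \p F$.

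To verify this $L^2$ preservation, I would argue locally. On a small coordinate open $U \subset X$ trivializing both $E$ and $F$, write $D$ as a matrix of scalar operators $\sum_\alpha a_\alpha \partial^\alpha$ with $a_\alpha \in \O(U)$. A section of $\p E$ over $U$ is (via Lemma~\ref{local isomorphism}) given on each connected component of $p^{-1}(U)$ by a holomorphic tuple belonging to $\p\O(U)$, that is, holomorphic on $p^{-1}(U)$ and in $L^2$ over every relatively compact subset. The coefficients $a_\alpha$ pulled back to $\tilde X$ are $G$-invariant and locally bounded on $\tilde X$, so multiplication by $a_\alpha$ preserves the local $L^2$ property.

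The real content is that the differentiations $\partial^\alpha$ preserve local $L^2$-integrability for holomorphic functions, which is false in general for smooth sections. This is the Cauchy integral estimate: for a polydisc $\Delta(z, r) \subset \tilde X$ and $u$ holomorphic on a neighborhood of $\Adh{\Delta(z,r)}$, one has $|\partial^\alpha u(z)| \leq C_\alpha r^{-|\alpha|-n} \|u\|_{L^2(\Delta(z,r))}$. Covering any relatively compact $V \subset\subset p^{-1}(U)$ by finitely many polydiscs contained in $p^{-1}(U)$ yields a uniform bound $\|\partial^\alpha u\|_{L^\infty(V)} \leq C_{V} \|u\|_{L^2(V')}$ for some slightly larger $V' \subset\subset p^{-1}(U)$; this places each $\partial^\alpha u$ in $L^2_{\text{loc}}$ and hence $\tilde D u$ in $\p F(U)$.

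The main obstacle is exactly this analytic step: a generic differential operator does not map $L^2$ to $L^2$. What rescues the situation is the combination of holomorphic coefficients and holomorphic sections, which reduces everything to the Cauchy estimate. Once this is established, defining $\p D$ is a routine gluing of local definitions, independence of the trivialization follows from functoriality of pullback, and the compatibility with the $\N(G)$-structure comes from the $G$-equivariance of $\tilde D$ together with the local isomorphism $\p E \simeq \p\O \otimes_\O E$ of Lemma~\ref{local isomorphism}, since $\p\O$ is precisely the sheaf whose $G$-structure gives the $\N(G)$-module structure on $\p E$.
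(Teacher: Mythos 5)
Your overall route is the same as the paper's: trivialize locally, use the $\O$-module structure of $\p E$ and $\p F$ to reduce to $D=\partial^{\alpha}$ acting on components in $\p\O$ (via Lemma \ref{local isomorphism}), and then invoke Cauchy inequalities to see that holomorphic differentiation preserves the relevant $L^2$ condition. The pointwise estimate you quote, $|\partial^{\alpha}u(z)|\leq C_{\alpha}r^{-|\alpha|-n}\|u\|_{L^{2}(\Delta(z,r))}$ with a constant depending only on $r$, is exactly the right tool and is what the paper means by ``a consequence of Cauchy inequalities.''

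There is, however, a gap in how you globalize it. The defining condition for a section of $\p\O(U)$ is not $L^2$-integrability over relatively compact subsets of $p^{-1}(U)$ (which is automatic for any holomorphic function and carries no information), but $L^2$-integrability over $p^{-1}(K)$ for every compact $K\subset\subset U$ --- a set meeting \emph{every} sheet of the covering, hence non-compact whenever $G$ is infinite. Your argument ``cover $V\subset\subset p^{-1}(U)$ by finitely many polydiscs'' therefore only establishes $L^2_{loc}$ on the total space and does not show that $\tilde D u$ lies in $\p F(U)$; the uniformity over the infinitely many sheets is precisely the content of the lemma. The repair uses the same estimate but organizes it downstairs: fix $K\subset\subset K'\subset\subset U$ and a radius $r$ (measured in the chart on $U$) so that every polydisc of radius $r$ centered over $K$ lifts into the sheet over $K'$; the Cauchy constant is then the \emph{same} on every sheet $g\cdot U$, and integrating over $K$ in each sheet and summing over $g\in G$ gives $\|\partial^{\alpha}u\|_{L^{2}(p^{-1}(K))}\leq C\,\|u\|_{L^{2}(p^{-1}(K'))}$, which is the required statement. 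The same remark applies to the coefficients $a_{\alpha}$: you need them bounded on $p^{-1}(K)$, which holds because they are pulled back from $X$ (equivalently, because $\p E$ is an $\O_X$-module), not merely because they are ``locally bounded on $\tilde X$.''
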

\begin{proof}In local trivialisation $\O^n\simeq E$, $\O^m\simeq F$, $D$ is given 
as $\sum_{|I|\leq r}a_{I}\partial_{I}$ with $a_{I}$ an holomorphic function. But $\p E$, and 
$\p F$ are $\O$ modules, hence  it is enough to study $D=\partial_{I}$. The 
claim is then a consequence of the Cauchy inequalities.
\end{proof}
\begin{lemma}\label{squareholomorphicpoincare}$$0\to\p\un{\CC}\to\p\O\overset
{d}{\to}\p\Omega^1\ldots\overset{d}{\to}\p\Omega^n\to 0$$ is well-defined and 
exact.
\end{lemma}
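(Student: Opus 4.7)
The plan is to verify the well-definedness of the complex, and then prove exactness by reducing to the classical holomorphic Poincar\'e lemma on a trivialising polydisc while controlling the $L^{2}$ norms sheet by sheet.

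First, for well-definedness: the exterior derivative $d:\Omega^{q}\to\Omega^{q+1}$ is a differential operator with (constant) holomorphic coefficients between holomorphic vector bundles, so the preceding lemma produces $\p d:\p\Omega^{q}\to\p\Omega^{q+1}$. The identity $\p d\circ \p d=0$ is checked locally on $\tilde X$, where the operator is the usual exterior derivative. The inclusion $\p\un{\CC}\to\p\O$ is well defined because a locally constant $L^{2}$ function on $p^{-1}(U)$ is automatically holomorphic and lies in the kernel of $\p d$.

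Next, for exactness, I would work locally on $X$. Fix $x\in X$ and choose a polydisc $\Delta\ni x$ small enough to be contained in a trivialising chart for the covering $p$, so that $p^{-1}(\Delta)=\sqcup_{g\in G}\Delta_{g}$ with each $\Delta_{g}\to\Delta$ biholomorphic. Via this identification, a section of $\p\Omega^{q}$ over $\Delta$ becomes a family $(\omega_{g})_{g\in G}$ of holomorphic $q$-forms on $\Delta$ such that $\sum_{g\in G}\|\omega_{g}\|^{2}_{L^{2}(\Delta)}<+\infty$ (the $L^{2}$-condition being taken with respect to any fixed smooth hermitian metric, all being equivalent on the relatively compact $\Delta$). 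For $q\geq 1$ and $d$-closed $(\omega_{g})$, I would fix a strictly smaller polydisc $\Delta'\subset\subset\Delta$ and apply the classical holomorphic Poincar\'e homotopy operator $K$ (associated to radial contraction in $\Delta$) to each $\omega_{g}$; $K$ is an integral operator with continuous kernel on $\Delta'$, so Cauchy--Schwarz gives a constant $C=C(\Delta,\Delta')$, independent of $g$, with $\|K\omega_{g}\|_{L^{2}(\Delta')}\leq C\|\omega_{g}\|_{L^{2}(\Delta)}$. Setting $\eta_{g}:=K\omega_{g}$, one has $d\eta_{g}=\omega_{g}$ on $\Delta'$ and
\[
\sum_{g}\|\eta_{g}\|^{2}_{L^{2}(\Delta')}\leq C^{2}\sum_{g}\|\omega_{g}\|^{2}_{L^{2}(\Delta)}<+\infty,
\]
so $(\eta_{g})$ defines a section of $\p\Omega^{q-1}$ over $\Delta'$ which is a primitive of $\omega|_{\Delta'}$. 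Exactness at $\p\O$ is the observation that a $d$-closed $(f_{g})$ in $\p\O(\Delta)$ has each $f_{g}$ equal to some constant $c_{g}\in\CC$, and the $L^{2}$-condition then reads $(c_{g})\in l^{2}(G)$, which is precisely a section of $\p\un{\CC}$ over $\Delta$.

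The main technical point is the uniform $L^{2}$-bound for the Poincar\'e homotopy operator: the shrinking $\Delta'\subset\subset\Delta$ is unavoidable at the level of bounds, but harmless for exactness since the question is stalk-wise local, and the uniformity over $g\in G$ is automatic because the homotopy is applied sheet by sheet with the same kernel. This compatibility of the classical proof with the $l^{2}(G)$-weighting built into $\p$ is the only nontrivial step.
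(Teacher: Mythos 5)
Your proof is correct, and you should know that the paper does not actually prove this lemma: well-definedness is delegated to the preceding lemma on differential operators with holomorphic coefficients (via Cauchy inequalities), and for exactness the author merely remarks, in the ``Question'' following the statement, that it would become a formal consequence of extending the $L^2$-direct-image functor $\p(\cdot)$ to coherent $D_X$-modules. Your sheet-by-sheet radial-homotopy argument therefore supplies a self-contained proof of something the paper leaves implicit, and it is in the same spirit as what the paper does where it \emph{does} give details: the proof of Lemma \ref{holomorphic to de rham} runs Takegoshi's $\dbar$-Neumann estimates uniformly over the sheets of $p^{-1}(U)\simeq U\times G$, and Proposition \ref{filteredquasiisomorphism} applies an explicit continuous left inverse $S_{m}$ from Griffiths--Harris fibrewise with $l^2$ control; your argument is the exact analogue for the holomorphic de Rham complex. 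One small repair: the radial (Cartan) homotopy $K$ is not an integral operator with a continuous kernel on $\Delta'$ --- it integrates $\omega$ over the segment $[0,z]$, not over the domain --- so Cauchy--Schwarz against a kernel is not quite the right justification for the uniform bound. The clean route is: for $\Delta'\subset\subset\Delta$ a concentric (hence star-shaped) polydisc, one has $\sup_{\Delta'}|K\omega_{g}|\leq C\,\sup_{\Delta'}|\omega_{g}|$, and the mean-value (Bergman) inequality for holomorphic functions gives $\sup_{\Delta'}|\omega_{g}|\leq C(\Delta',\Delta)\,\|\omega_{g}\|_{L^{2}(\Delta)}$; combining the two yields the $g$-independent bound $\|K\omega_{g}\|_{L^{2}(\Delta')}\leq C\|\omega_{g}\|_{L^{2}(\Delta)}$ that your argument needs. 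With that adjustment everything goes through, including the identification of $\Ker(d:\p\O\to\p\Omega^{1})$ with $\p\un{\CC}$ via $(c_{g})\in l^{2}(G)$, since each sheet $\Delta_{g}$ is connected and carries the same volume in the pullback metric.
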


Remark:
It seems natural to extend the functor direct $l^{2}-$images to an exact functor from the category of $D_{X}-$coherent modules to the category of $\N(G)-$sheaves. 
\subsubsection{Link to singular cohomology} \label{Link to singular cohomology}
In this section we recall that the cohomology of a locally constant sheaf is isomorphic to the singular cohomology with local coefficients and to the equivariant cohomology of the universal cover $\tilde X$ with values in a $\pi_{1}(X)-$module (\Eil, \Ste, \Whi). We refer to Dimca \cite{Dim} section 2.5 for a more detailed presentation.

A bundle of groups $L$ (or local system of groups) on $X$ is a covariant functor from the fundamental groupo\"{i}d of  $X$ to the category of abelian groups (\cite{Whi} p. 257). 

A locally constant sheaf $\L$ defines a bundle of groups $L$ for  a locally constant sheaf over a simply connected space is constant (\cite{MebNar} I.2). Hence a path $\gamma:[0,1]\to X$ defines the isomorphism $\L_{\gamma(0)}\simeq \gamma^{*}(\L)_{0}\simeq \gamma^{*}(\L)_{1}\simeq \L_{\gamma(1)}$ with $\L_{x}$ the stalk of $\L$ at a point $x\in X$. 

A bundle of groups $L$ on $X$ and a point $x_{0}\in X$ define a $\pi_{1}(X,x_{0})-$module: let $\gamma_{x_{0}}$ be the constant path at $x_{0}$. Then $L(x_{0}):=L(\gamma_{x_{0}})$ is a $\pi_{1}(X,x_{0})-$module through the representation $\rho_{L}: \pi_{1}(X,x_{0})\ni \gamma\mapsto L(\gamma)\in Hom(L(\gamma_{x_{0}}),\, L(\gamma_{x_{0}}))$.

A $\pi_{1}(X)-$module $M$ defines a locally constant sheaf: the sheaf of cross sections of the fiber bundle $\tilde X\times_{\pi_{1}(X)}M\to X$.

Let $K$ be the kernel of the monodromy representation $\rho_{L}$. Then the pullback of  the bundle of groups $L$  and of the locally constant sheaf $\L$  are constant on $p_{K}: X_{K}\to X$, the covering with fundamental group $K$.  $L(x_{0})$ is a $\pi_{1}(X,\, x_{0})/K-$module.

Let $C^{p}_{s}(X,L)$ be the group of singular cochains with values in $L$: This is the set of functions $c$ which assigns to a singular simplex $\sigma:\Delta_{p}\to X$ an element $c(\sigma)\in L(\sigma(e_{0}))$. This is a group under addition of functional values. This lead to a complex $(C^{.}_{s}(X,L),\delta)$ whose cohomology $H_{sing}^{k}(X,L):=H^{k}(C^{.}_{s}(X,L),\delta)$ is by definition the singular cohomology  group of $X$ with values in the local system $L$ (see  \cite{Whi} p. 270).

Let $\L$ be a locally constant sheaf over $X$ with associated local system $L$. Let $(\C^{.}_{s}(X,\L),\delta)$ be the differential sheaf associated to the presheaf $U\to (C^{.}_{s}(U,L_{|U}),\delta)$ (equivalent to the definition given in \cite{Bre} I.7). It defines a $\Gamma(X,.)-$acyclic resolution of $\L$ and provides an isomorphism between the sheaf cohomology groups of $\L$ and the singular cohomology groups of $L$ (see \cite{Bre} Chap. III).

Let $X_{K}\to X$ be the cover of $X$ with group of deck transformations $\pi_{1}(X,\, x_{0})/K$. Let $(C_{.}(X_{K}),\delta)$ be its singular chain complex. Then $(Hom_{\CC[\pi_{1}(X,\, x_{0})/K]}(C_{.}(X_{K}),L(x_{0})),\delta)$ is a complex whose cohomology $H^{q}_{e}(X_{K},L_{x_{0}})$ is by definition the group of equivariant cohomology of $X_{K}$ with values in $L(x_{0})$.
From \cite{Eil} \para. 24, we have an isomorphism $H^{k}_{e}(X_{K},L(x_{0}))\simeq H_{sing}^{k}(X,L)$.
\subsection{Mixed Hodge structures}
We recall or adapt definitions and fundamental results from Deligne's mixed Hodge structures (\cite{Del}, \cite{Deltrois}). The main points are contained in two theorems: the theorem on strictness of morphisms and the theorem on degenerescence of the two spectral sequences associated to a mixed Hodge complex.
\subsubsection{ }\label{pseudo-morphism}
We recall (see Peters-Steenbrink \cite{PetSte} p. 49) that a pseudo-morphism $f:K^{.}\dto L^{.}$ between two complexes $K^{.}$ and $L^{.}$ in an abelian category is a chain of morphisms $$K^{.}\overset{f}{\to}K^{.}_{1} \overset{ f_{1} }{\ot}K^{.}_{2}\overset{f_{2}}{\to}\ldots\overset{f_{n}}{\to}K^{.}_{n+1}=L^{.}$$ such that $f_{1},\ldots,f_{n}$ are quasi-isomorphisms. It induces a morphism in the derived category. One says that $f:K^{.}\dto L^{.}$ is a pseudo-isomorphism when $f$ is a quasi-isomorphism. When $K^{.}$ and $L^{.}$ are filtered complexes it is understood that morphisms are filtered and that a quasi-isomorphism is a filtered quasi-isomorphism ($Gr_{F}^{.}(f_{i})$ is a quasi-isomorphism).

For further reference,  
we recall (\cite{Del} $1.4.6$) that if $(K^{.},d^{.})$ is a complex in an abelian category then: 
\begin{trivlist}
\item[-] The canonical filtration $\canfilt$ on $K$ is defined by: $\tau_{\leq p}K^{n}$ is equal to $K^{n}$ if $n<p$, to $Ker(d^{p})$ if $n=p$ and is the null object if $p<n$. \label{filtration tau}
\item[-] The trivial filtration   is defined by: $(\sigma_{.\geq p}K)^{n}$ is equal to the null object if $n<p$ and $K^{n}$ if $p\leq n$.
\end{trivlist}
 A quasi-isomorphism $f:K^{.}\to L^{.}$ defines a filtered quasi-isomorphism $f:(K^{.},\canfilt)\to (L^{.},\canfilt)$.


%
\begin{definition}\label{definition of MHS}Let $R$ be a ring which is also an $\RR-$algebra. Let $\tau$ be a (hereditary) torsion theory on $\Mod(R)$. 
\item[1)]A mixed Hodge structure $H=(H_{R},W,F)$ in $\Mod(R)_{/\tau}$
is given by

\item[i)] a left $R-$module $H_{R}$,
\item[ii)] a filtration $W$ on $H_{R}$ in $\Mod(R)_{/\tau}$,
\item[iii)] a filtration $F$ on $H_{R}\otimes_{\RR}\CC$ in $\Mod(R\otimes_{\RR}\CC)_{/\tau\otimes\CC}$ such that $W_{\CC},F,\bar F$(\ref{conjugate filtration}) are opposed (see \cite{Del} 1.2) in $\Mod(R\otimes_{\RR}\CC)_{/ \tau\otimes\CC}$.
\item[2)] A morphism of mixed Hodge stucture $f:H\to H'$ in $\Mod(R)_{/\tau}$  is a morphism \\$f\in Hom_{\Mod(R)_{/\tau}}(H_{R}, H'_{R})$ such that $f$ is compatible with $W$ and $ f_{\CC}$ is compatible with $F$. 
\end{definition}
 Theorem 1.2.10 of Deligne \cite{Del} implies:
\begin{theorem}\label{MHSabelian} The category of mixed Hodge structure in $\Mod(R)_{/\tau}$ is abelian. A morphism $f:H\to H'$ between MHS in $\Mod(R)_{/\tau}$ is strict for the filtrations.
\end{theorem}

In this article, we use only mixed Hodge complexes over $\RR-$algebras. This reflects the use of $l^{2}(G,\RR)$. 

\begin{definition}\label{Hodgecomplex}Let $R$ be a ring which is also an $\RR-$algebra. Let $\tau'$ and $\tau$ be torsion theories on $\Mod(R)$ such that $\tau'$ is smaller than $\tau$.
\item[] A $(\tau',\tau)-$Hodge complex  $(K_{R},(K_{R\otimes \CC},F))$ of weight $m$ is given by
\item[1)] A bounded below complex  of modules $K_{R}$ in $\Mod( R)_{/\tau'}$ 
\item[2)] A bounded below filtered complex of modules $(K_{R
\otimes\CC},F)$ in  $\Mod(R\otimes_{\RR}\CC)_{/\tau'\otimes\CC}$. 
\item[3)] A pseudo-morphism of bounded below complexes $\alpha:K_{R}
\dto K_{R\otimes_{\RR}\CC}$ 
(comparison morphism)  in $\Mod(R)_{/\tau'}$ such that $\alpha\otimes Id:K_
{R}\otimes\CC\dto K_{R\otimes \CC}$ is a pseudo-isomorphism. 

The isomorphism $H(K_{R})
\otimes\CC\tilde\to H(K_{R\otimes\CC})$ (in $\Mod(R\otimes\CC)_{/\tau'\otimes\CC'}$) defines a real structure on $H(K_{R\otimes
\CC})$. 
\item[] 
One requires that 
\item[1)] $d$ is strictly compatible with $F$ in $\Mod(R\otimes\CC)_{/\tau\otimes\CC}$.  
\item[2)]  $F$ and $\bar F$ (\ref{conjugate filtration}) are $m+k-$opposed on $H^k(K_{R\otimes\CC})\simeq H^
{k}(K_{R})\otimes\CC$  in $\Mod(R\otimes\CC)_{/\tau\otimes\CC}$.
\end{definition}

\begin{definition}(Following \cite{PetSte})
 \item[] Let $R$ be a ring which is also an $\RR-$algebra. Let $\tau',\tau$ be  torsion theories on $R$ such that $\tau'$ is smaller than $\tau$. 
\item[] A $(\tau',\tau)-$mixed Hodge complex $((K_{R},W), (K_{R\otimes \CC},W,F),\beta)$ consists in 
\item[1)] A bounded below filtered complex  $(K_{R},W)$ in $\Mod(R)_{/\tau'}$, 
\item[2)] A bounded below bi-filtered complex $(K_{R\otimes \CC},W,F))$ in $\Mod({R\otimes\CC})_{/\tau'\otimes\CC}$  and a pseudo-morphism $\beta:(K_{R},W)\dto (K_{R\otimes \CC},W)$ (comparison morphism) in the category of bounded below filtered complexes  in $\Mod(R)_{/\tau'}$ inducing a pseudo-isomorphism  $\beta\otimes Id_{\CC}:(K_{R}\otimes\CC,W)\dto (K_{R\otimes \CC},W)$.
\item[3)] One requires that for each $n$, $(Gr^{W}_{n}(K_{R}),(Gr^{W}_{n}(K_{R\otimes \CC},F))$ with pseudo-morphism
 $$Gr^{W}_{n}(\beta):Gr^{W}_{n}(K_{R})\dto Gr^{W}_{n}(K_{R\otimes\CC})$$ is a $(\tau',\tau)-$Hodge complex of weight $n$ .
 %
\end{definition}
A $(\tau',\tau)-$mixed Hodge complex will also be called a mixed Hodge complex mod$(\tau',\tau)$.  
\begin{example}
\item[1)]
 If $\tau'=(\{0\},\Mod(R))$ is the trivial torsion theory (no non zero torsion submodule), then a $(\tau',\tau)-$mixed Hodge complex will be referred as a $\tau-$mixed Hodge complex or a mixed Hodge complex modulo $\tau$. Therefore complexes and pseudo-morphisms are data in $\Mod(R)$ and properties of degenerescence are assumed in $\Mod(R)_{/\tau}$.
\item[2)] When $\tau'=\tau$, we will speak of mixed Hodge complex in $\Mod(R)_{/\tau}$. Hence each mixed Hodge complex modulo $\tau$ defines a mixed Hodge complex in $\Mod(R)_{/\tau}$.
\end{example}

The following theorem contains the  ''lemme des deux filtrations'' (\cite{Del} 1.3 and \cite{Deltrois} 7.2): 
\begin{theorem}\label{lemme des deux filtrations} Let $((K_{R},W), (K_{R\otimes \CC},W,F))$ be a mixed Hodge complex mod$(\tau',\, \tau)$. Then the recurrent filtration and the direct filtration induced by $F$ on $E_{r}^{p,q}(K_{R\otimes \CC},W)$ are equal in $\Mod(R\otimes\CC)_{/\tau\otimes\CC}$. The  sequence $0\to E_{r}(F^pK,W)\to E_{r}(K,W)\to E_{r}(K/F^pK)\to 0$ is exact for any $1\leq r\leq +\infty$ and for any $p$. The weight spectral sequence $(E_{r}(K_{R},W))_{r}$ degenerates at  $E_{2}$ in $\Mod(R)_{/\tau}$.  The Hodge spectral sequence $(E_{r}(K,F))_{r}$ degenerates at $E_{1}$ in $\Mod(R\otimes\CC)_{/\tau\otimes\CC}$.
\end{theorem}
\begin{proof}
 We have the following data within a torsion theory $\tau'$ smaller than $\tau$:
There exists a left fraction 
$$
\begin{diagram}
\node{} \node{(K'_{R\otimes\CC},W)}\node{}\\
\node{(K_{R},W)} \arrow{ne,t}{\beta_{1}}\node{} \node{(K_{R\otimes\CC},W,F)}\arrow{nw,tb}{\beta_{2}}{\tilde{qi}}
\end{diagram}
$$ with $\beta_{1}$ a morphism  in $\Mod(R)_{/\tau'}$ and $\beta_{2}$ a quasi-isomorphism in $\Mod(R\otimes_{\RR}\CC)_{/\tau'\otimes\CC}$ such that $\beta_{1}\otimes 1_{\CC}$ is a filtered quasi-isomorphism.
If, $r\geq 1$, the morphisms
\begin{eqnarray}
 E_{r}(\beta_{1}):E_{r}(K_{R},W)\to E_{r}(K'_{R\otimes\CC},W)\\
E_{r}(\beta_{1}\otimes 1_{\CC}):E_{r}(K_{R}\otimes \CC,W)\to E_{r}(K'_{R\otimes\CC},W)\\
E_{r}(\beta_{2}):E_{r}(K_{R\otimes\CC},W)\tilde\to E_{r}(K'_{R\otimes\CC},W) 
\end{eqnarray}
define morphisms of spectral sequences
\begin{eqnarray}
E_{r}(\beta)=  E_{r}(\beta_{2})^{-1} \circ E_{r}(\beta_{1})\\
E_{r}(\beta_{\CC})=  E_{r}(\beta_{2})^{-1} \circ E_{r}(\beta_{1}\otimes 1_{\CC})
\end{eqnarray}
Then $E_{r}(\beta_{1})\otimes1_{\CC}\simeq E_{r}(\beta_{1}\otimes 1_{\CC})$ is an isomorphism. Moreover the morphism of spectral sequences 
$(E_{r}(\beta))_{r}$ defines a real structure $\alpha_{r}$ on $E_{r}(K_{R\otimes\CC},W)$ such that  $d_{r}$ is real, for $$d_{r}\simeq (E_{r}(\beta)\otimes 1_{\CC}) (d_{r}\otimes 1_{\CC})\,.$$
Note that the real structure induced on $E_{r+1}\simeq H(E_{r})$ by $E_{r}$ is the same than its real structure. The differential
$d_{r}$ is compatible with the direct filtrations and their conjugates.

One reduces modulo $\tau$: 
One obtains a real structure modulo $\tau$ on each term of the spectral sequence, with $d_{r}$ real and compatible with the direct filtrations.
 
\item[1)] By hypothesis, $F_{d}=F_{d^*}=F_{rec}$ and their conjugates define a Hodge structure modulo $\tau$ of weight $-p+(p+q)=q$ on $E_{1}^{p,q}(W)=H^{p+q}(Gr_{-p}^{W})$ . But $d_{1}$ is real and compatible with $F$ so that it is a morphism of Hodge structures in $\Mod(R)_{/\tau}$. It is therefore strict for $F$  in $\Mod(R\otimes\CC)_{/\tau\otimes\CC}$. 
\item[2)]Proposition $(7.2.5)$ in \cite{Deltrois}~ implies $F_{d}=F_{d^*}=F_{rec}$ modulo $\tau\otimes\CC$ on $E_{2}(W)$. From \cite{Del} $(1.2.10)$, generalised in \ref{MHSabelian}, the category of mixed Hodge structures in $\Mod(R)_{/\tau}$ is abelian. Hence $(E_{2}^{p,q},\alpha_{2}^{p,q},F_{rec})$ is a  Hodge structure of weight $q$ in $\Mod(R)_{/\tau}$. The differential $d_{2}$ is a morphism of Hodge structures in $\Mod(R)_{/\tau}$ for it is real and compatible with $F_{rec}$ and its conjugate. This implies that $d_{2}$ is strict modulo $\tau\otimes\CC$.
But $d_{2}: E_{2}^{p,q}\to E_{2}^{p+2,q-1}$ is a morphism of Hodge structures of different weights. It must vanish.

An induction argument implies that $d_{r}=0$ if $r\geq2$.
\item[3)]  One concludes from section $(7.2)$ of \cite{Deltrois}: The following sequence is exact for any $1\leq r\leq +\infty$ and any $p\in\ZZ$: $$0\to E_{r}(F^pK,W)\to E_{r}(K,W)\to E_{r}(K/F^pK,W)\to 0\,,$$ and the spectral sequence $E(K,F)$ degenerates at $E_{1}$. 
\end{proof}
Note that the abelian category $\Mod(R)_{/\tau}$ admits inductive limits and that  the localisation functor $\Mod(R)\to \Mod(R)_{/\tau}$ commutes with inductive limits (\cite{Gab}~ prop. 9 p. 378). Hence, we consider the category $\M(R)_{/\tau}$ of sheaves with values in $\Mod(R)_{/\tau}$. 

\begin{definition}\label{definition of CHC}
 On a topological space $X$, a $(\tau',\tau)-$cohomological Hodge complex of weight $m$ $(\K_{R},(\K_{R\otimes \CC},F))$ consists in
\item[1)] A bounded 
below complex  $\K_{R}$ in $\M(R)_{/\tau'}$,
\item[2)] A bounded below filtered complex  $(\K_{R\otimes\CC},F)$ in $\M(R\otimes\CC)_{/\tau'\otimes\CC}$,
\item[3)] A pseudo-morphism of bounded below complexes $
\alpha:\K_{R}\dto \K_{R\otimes\CC}$ (first comparison morphism) in $\M(R)_{/\tau'} $
such that $\alpha\otimes Id:\K_{R}\otimes\CC\dto \K_{R\otimes \CC}$ is a pseudo-isomorphism in $\M(R\otimes\CC)_{/\tau'\otimes\CC}$ and
\item[] $R\Gamma(\K)$ is a $(\tau',\tau)-$Hodge complex of weight $m$. 
\end{definition}
\begin{definition}\label{definition of CHMC}
A $(\tau',\tau)-$cohomological mixed Hodge complex of sheaves \\$\K=((\K_{\RR},W);\,(\K_{\CC},F,W),\beta) $ is given by complexes of sheaves of $N(G)_{/\tau'}-$modules and pseudomorphism $\beta: (\K_{\RR},W)\dto (\K_{\CC},W)$ in that category, such that $\beta\otimes 1_{\CC}$ is a quasi-isomorphism and such that for all $m\in \ZZ$, $Gr_{W}^{m}\K$ is a $(\tau',\tau)-$cohomological  Hodge complex of sheaves. 
\end{definition}

\begin{definition} When $\tau'=(0,\Mod(R))$ is the trivial torsion theory, $(\K_{R},(\K_{R\otimes \CC},F))$ (resp. $((\K_{\RR},W);\,(\K_{\CC},F,W),\beta)$) is called a $\tau-$cohomological Hodge complex 
of sheaves of weight $m$ (resp. a $\tau-$cohomological mixed Hodge complex of sheaves).
\end{definition}
In this article, we will mostly deal with $\tau'=(0,\Mod(R))$ the trivial torsion theory. Hence we will use complexes of sheaves of $N(G)-$modules.

\section{The Hodge to De Rham spectral sequence.}

\subsection{The local Hodge to De Rham spectral sequence}
From now on, $p:\tilde X\to X$ will be a covering of a complex connected manifold $X$ by a complex manifold $\tilde X$, not necessarily connected. Let $G$ be the group of covering transformations.
\begin{definition}\label{$G-$cover}
When $G$ acts transitively, one says that $p:\tilde{X}\to X$ is a $G-$covering.
\end{definition}
\subsubsection{Sobolev spaces}\label{Sobolev spaces}
(see \cite{Shu} p. 511).
 Fix a hermitian (later K\"{a}hler) metric on $X$ and take its pullback on $\tilde X$. 
 
 Let $\Lambda^{k}_{\RR}:=\Lambda^{k} T^{*}_{\RR}(X)$, $\Lambda^{k}:=\Lambda^{k}_{\RR}\otimes\CC$ and $\Lambda^{p,q}:=\Lambda^{p} T^{*(1,0)}_{\CC}(X)\otimes\Lambda^q T^{*(0,1)}_{\CC}(X)$ be respectively the bundles of real $k$-forms, of complex $k-$forms, and of bi-degree $(p,q)-$forms on $X$. 
Let $\A^{k}_{\RR}$, $\A^{k}$ and $\A^{p,q}$ be the associated sheaves of differential forms. 
Let $(E,h_{E})$ be a hermitian complex vector bundle on $X$ (or a riemannian real vector bundle). Let $(\tilde E,h_{\tilde E})$ be its pullback by $p$.

\begin{definition}
We define the following classical spaces of sections of $\tilde E$, following \cite{Shu} \para. 3:
\item[1)] The Hilbert space of square integrable sections   $(L^{2}(\tilde X, \tilde E),\,||.||_{L^{2}(\tilde X,\tilde E)})$.
\item[2)] The Frechet space of smooth sections with compact support 
$\C^{\infty}_{c}(\tilde X,\tilde E)$.
 \item[3)] The space of distributional sections $\mathcal{D}'(\tilde X,\tilde E)$.
\item[4)] The Sobolev space $S^{j}(\tilde X,\tilde E)$,  $j\in \NN$, is 
the space of $u\in \mathcal{D}'(\tilde X,\tilde E)$ such that $\forall\, 0\leq i\leq j,\,\,\tilde  \nabla^{i}u\in   L^{2}(\tilde X,\tilde E)$ (with $\nabla$ a connection on $E$). Then $||u||^{2}_{S^{j}(\tilde X,\tilde E)}:=\sum_{0\leq i\leq j}||\tilde \nabla^{i}u||^{2}_{L^{2}(\tilde X,\tilde E)}$.
\item[5)] Denote also $S^{+\infty}(\tilde X,\tilde E)=\cap_{j\in \NN}S^{j}(\tilde X,\tilde E)$.
\end{definition}
 
\subsubsection{Local Sobolev spaces uniform with respect to $p$.}

 Let $U$ be an open subset in $X$. Let $j\in \NN\cup\{+\infty\}$.
Define the local Sobolev space uniform with respect to $\p S^{j}_{loc}(U,E)$ as the set $\{\alpha\in \mathcal{D}'(p^{-1}(U),\tilde E),\,\forall \theta\in\C^
{\infty}_{c}(U),\,  (\theta\circ p) \alpha\in S^j(\tilde X,\tilde E)\}$.

\subsubsection{Sheaf of uniform Sobolev spaces}

Let $j\in \NN\cup\{+\infty\}$, let $\p\cS^{j}\A^{p,q}(E)$  be the sheaf on $X$ associated to the presheaf $U\to \p S^{j}_{loc}(U,\Lambda^{p,q}\otimes \tilde E)$ and set $\p\cS^{j}\A^{n}(E)=\oplus_{p+q=n}\p\cS^{j}\A^{p,q}(E)$.

Let $D:\C^{\infty}(X,E)\to \C^{\infty}(X,E')$ be a differential operator on $X$ acting
on hermitian vector bundles. 
Let $L^{2}(p^{-1}(U),\tilde E)\cap \Dom(D)$ be the space of square summable sections $\alpha$ such that 
$D\alpha$ is square summable.
Let  $\p(\E \cap \Dom(D))$ be  the sheaf generated by the presheaf $U\to [ L^{2}(p^{-1}(U),\tilde E)\cap \Dom(D)]$. 

Note that the above sheaves do not depend on the smooth metrics on $X$ and $E$
and that these are sheaves of $\N(G)-$modules.

Let $\p\cS^{j}\A^{k}_{\RR}$ be the sheaf of $\N(G,\RR)-$modules on $X$ associated to the presheaf of real forms 
$U\to \p S^{j}_{loc}(U,\Lambda^{k}_{\RR})$.

\begin{lemma}\label{resolutions standards}
\item[1)] The complex $(\p\Omega^{.},d)$ is a $\N(G)-$resolution of $\p\CC$.
\item[2)]Let $k\in\NN\cup\{+\infty\}$. Then $ (\p\cS^{k-.}\A^{.}_{\RR},d)$ is a $\N(G,\RR)-$resolution of $\p\RR$ and $ (\p\cS^{k-.}\A^{.},d)$ is a $\N(G)-$resolution of $\p\CC$.
\item[3)]  The sheaf $\p\cS^{j}\A^{p,q}\simeq \p\cS^j\A^{0}\otimes_{\A}\A^{p,q}$ is a 
fine $\N(G)-$sheaf.
\end{lemma}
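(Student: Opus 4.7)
The plan is to reduce all three parts to a local analysis on trivializing open sets $U\subset X$ for the covering $p$, on which $p^{-1}(U)=\sqcup_{g\in G}gU_0$ is a disjoint union of isometric copies of a fundamental slice $U_0$. On such neighborhoods the $L^2$ direct image sheaves are controlled by uniform-in-$g$ estimates on each slice, which makes the Poincar\'e-type arguments essentially slice-wise.

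For (1), the statement is a repackaging of the preceding lemma \ref{squareholomorphicpoincare}, which provides exactness of $0\to\p\CC\to\p\O\to\p\Omega^1\to\cdots\to\p\Omega^n\to 0$. It remains to check the $\N(G,\CC)$-module structure and $\N(G,\CC)$-linearity of $d$. The first follows from lemma \ref{local isomorphism} via the identification $\p\Omega^r\simeq\p\O\otimes_{\O}\Omega^r$; the second because $\p d$ is built from the holomorphic exterior differential, which commutes with the left $G$-action on fibres.

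For (2), the substantive content is a uniform $L^2$-Sobolev Poincar\'e lemma. On a star-shaped trivializing neighborhood $U\subset X$, I would employ the classical Cartan homotopy operator $K$ (integration along radii paired with contraction by the radial vector field), applied slice-wise on $p^{-1}(U)=\sqcup_g gU_0$. Being given by integration against a smooth compactly supported kernel, $K$ defines a bounded map $S^s\A^r(gU_0)\to S^{s+1}\A^{r-1}(gU_0)$ on each slice with norm independent of $g$ (the metric on $\tilde X$ being pulled back from $X$), hence extends to a bounded $\N(G)$-linear operator $\p\S^s\A^r(U)\to\p\S^{s+1}\A^{r-1}(U)$ satisfying $dK+Kd=\mathrm{id}-\pi_0$, with $\pi_0$ the projection onto locally constant forms. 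These locally constant forms, being of uniform Sobolev class, are exactly sections of $\p\CC$ (or $\p\RR$ in the real case). The case $k=+\infty$ follows by intersecting over all $k$.

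For (3), local triviality of the smooth vector bundle $\Lambda^{p,q}T^*X$ as a finite free $\A$-module yields the identification $\p\S^j\A^{p,q}\simeq\p\S^j\A^0\otimes_\A\A^{p,q}$ (transition functions are smooth matrices acting by pointwise multiplication, which preserves uniform Sobolev regularity upstairs and commutes with the deck action). Fineness then follows because $\p\S^j\A^{p,q}$ is a module over the fine sheaf $\A$: any smooth partition of unity $\{\rho_\alpha\}$ on $X$ lifts through $p^*$ to a $G$-invariant partition on $\tilde X$, and multiplication by $\rho_\alpha$ provides an $\N(G)$-linear endomorphism with arbitrarily small support. The main obstacle is the uniformity of Sobolev estimates across the deck-translated slices in (2), morally automatic from the pullback metric but requiring explicit control of the homotopy kernel's derivatives in a suitable trivialization.
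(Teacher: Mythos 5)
The paper never actually prints a proof of this lemma: it sits between lemma \ref{squareholomorphicpoincare} and lemma \ref{holomorphic to de rham}, and the only model the author supplies for its analytic content is the proof of lemma \ref{holomorphic to de rham}, where exactness in the scale $\p\S^{k-*}$ is obtained slice-wise from Takegoshi's estimates for the $\dbar$-Neumann operator $N$ on strictly pseudoconvex domains, the point being that the solution $\dbar^{*}N f$ gains one derivative by elliptic regularity, uniformly over the deck translates. Against that background, your parts (1) and (3) are fine: (1) really is a repackaging of lemma \ref{squareholomorphicpoincare} together with lemma \ref{local isomorphism}, and the partition-of-unity argument for fineness in (3) is standard and correct.

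Part (2) has a genuine gap. In the complex $(\p\S^{k-*}\A^{*},d)$ an $r$-form carries Sobolev index $k-r$, so exactness at degree $r$ requires solving $d\beta=\alpha$ with a gain of one derivative ($\alpha\in S^{k-r}\Rightarrow\beta\in S^{k-r+1}$), uniformly over the slices $gU_{0}$; note also that $k-r$ is negative for $r$ large, so the operator must act on currents. The classical Cartan homotopy operator you invoke (integration along radii contracted with the Euler field) does not have this property: its Schwartz kernel is a measure carried by the ray family $\{(x,tx):t\in[0,1]\}$, not a smooth compactly supported function, and it is precisely because the unregularized operator fails to map $H^{s}$ into $H^{s+1}$ that the averaged operators of Bogovskii and of Mitrea--Mitrea--Monniaux and Costabel--McIntosh were introduced. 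So the sentence ``being given by integration against a smooth compactly supported kernel, $K$ defines a bounded map $S^{s}\A^{r}\to S^{s+1}\A^{r-1}$'' is false as stated, and it is exactly the step that carries the content of the lemma. The gap is repairable in two ways, both compatible with the rest of your slice-wise scheme: either replace $K$ by the regularized Poincar\'e homotopy $K_{\phi}\omega(x)=\int\phi(y)K_{y}\omega(x)\,dy$ (an average of Cartan operators over base points $y$), which is a pseudodifferential operator of order $-1$ on a star-shaped chart and hence bounded $S^{s}\to S^{s+1}$ for every real $s$ with constants independent of the slice; or imitate the paper's own proof of lemma \ref{holomorphic to de rham}, using the Green operator of the de Rham Laplacian with absolute boundary conditions on a ball to write a closed form as $d(d^{*}N\alpha)$ with the required Sobolev gain. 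With either fix, the remaining points you make (uniformity from the pulled-back metric, identification of $\ker d$ on $0$-forms with $\p\RR$ resp. $\p\CC$ via $l^{2}$-summability over $G$, and intersection over $k$ for the case $k=+\infty$) go through.
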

\begin{lemma}\label{holomorphic to de rham} Let $k\in\NN$, $k\geq n$. Let $F$ be the Hodge filtration.
\item[1)] The morphism $(\p\Omega^{.},d,F)\to (\p\cS^{k-.}\A^{.},d,F)$ is a filtered quasi-isomorphism.
 \item[2)] The following complex of $\N(G)-$sheaves is exact: $$0\to\p\Omega^{p}\overset{i}{\to}\p\cS^k\A^{p,0}\overset{\dbar}{\to}
\p\cS^{k-1}\A^{p,1}\overset{\dbar}{\to}\ldots \to\p\cS^{k-n}\A^{p,n}\to 0\,.$$
\end{lemma}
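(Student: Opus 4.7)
I would reduce (1) to (2) via graded pieces of the Hodge filtration, then prove (2) by a standard local $\dbar$-Poincar\'e argument on polydisks over which the cover trivializes, taking advantage of a Sobolev gain for the $\dbar$-solution operator.

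\textbf{Reduction of (1) to (2).} The Hodge filtration on $(\p\Omega^{*},\partial)$ is the stupid filtration $F^{p}\p\Omega^{*}=\p\Omega^{\geq p}$, whose graded pieces are $\mathrm{gr}^{p}_{F}\p\Omega^{*}=\p\Omega^{p}[-p]$ equipped with the zero differential. On $(\p\S^{k-*}\A^{*},d)$, splitting by bidegree gives $\mathrm{gr}^{p}_{F}(\p\S^{k-*}\A^{*})$ equal to the complex $(\p\S^{k-p-*}\A^{p,*},\dbar)$ shifted by $-p$. Hence (2) is precisely the statement that the morphism of filtered complexes induces a quasi-isomorphism on every graded piece, and a standard spectral sequence argument in $\Mod(\N(G,\CC))$ then yields (1).

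\textbf{Local setup for (2).} The statement is local on $X$. Cover $X$ by coordinate polydisks $U$ small enough that $p^{-1}(U)=\bigsqcup_{g\in G}gV$ with each $gV\to U$ biholomorphic; sections of $\p\S^{s}_{\mathrm{loc}}\A^{p,q}$ over $U$ then correspond to $\ell^{2}(G)$-indexed families of local Sobolev $(p,q)$-forms on $V$, with the $N(G)$-module structure coming from left translation on the index. The analytic input I would invoke is the following $\dbar$-Poincar\'e lemma with Sobolev gain: on a sufficiently small polydisk $V\subset\CC^{n}$ and for every $s\in\RR$, there exists a continuous linear operator $K_{q}\colon S^{s}(V,\A^{p,q})\cap\ker\dbar\to S^{s+1}(V,\A^{p,q-1})$ with $\dbar K_{q}=\mathrm{id}$, for $1\leq q\leq n$. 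This is classical and may be obtained either by H\"ormander's $L^{2}$-estimates on a strictly pseudoconvex domain followed by elliptic regularity for $\dbar\dbar^{*}+\dbar^{*}\dbar$, or by explicit Bochner--Martinelli--Koppelman kernels whose mapping properties on Sobolev scales are standard.

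\textbf{Assembly on the cover.} For $q\geq 1$, given $\alpha\in\p\S^{k-q}_{\mathrm{loc}}\A^{p,q}$ on $U$ with $\dbar\alpha=0$, set $\beta_{g}=K_{q}(\alpha|_{gV})$ on each component and $\beta=\sum_{g}\beta_{g}$. Uniformity of $K_{q}$ (the same operator norm on every isometric copy $gV$) combined with the defining property $\sum_{g}\|\alpha|_{gV}\|_{S^{k-q}}^{2}<\infty$ on relatively compact sub-polydisks yields $\beta\in\p\S^{k-q+1}_{\mathrm{loc}}\A^{p,q-1}$ with $\dbar\beta=\alpha$; equivariance makes the construction well defined as a morphism of $\N(G,\CC)$-sheaves. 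At the left end, the kernel of $\dbar\colon\p\S^{k}\A^{p,0}\to\p\S^{k-1}\A^{p,1}$ consists of uniformly locally $L^{2}$ holomorphic $p$-forms, which by elliptic regularity applied componentwise lie in $\p\Omega^{p}$.

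\textbf{Main obstacle.} The real work is in securing the Sobolev gain of the $\dbar$-solution operator together with operator-norm bounds that are uniform across the infinitely many copies $gV$ of the fundamental domain; the hypothesis $k\geq n$ is exactly what guarantees that after the $n$-fold loss of derivatives along the complex one never exits the $L^{2}$-range in which these tools apply, and it is also what makes the resolution in (2) terminate cleanly at the top form.
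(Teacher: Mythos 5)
Your proposal is correct and follows essentially the same route as the paper: reduce the filtered quasi-isomorphism to exactness of the graded $\dbar$-complexes, then solve $\dbar$ locally with a one-derivative Sobolev gain on a strictly pseudoconvex chart and apply the same solution operator uniformly on each sheet of the trivialized cover $\pi^{-1}(U)\simeq U\times G$. The only cosmetic difference is the source of the gaining solution operator (you cite H\"ormander or Bochner--Martinelli--Koppelman kernels, the paper uses $\dbar^{*}N$ with Takegoshi's Neumann operator), which does not change the argument.
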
 
\begin{proof} One notes that $(2)$ is equivalent to $(1)$ for $Gr_{F}^{p}(\p\cS^{k-.}\A^{.},d,F)$ is the Dolbeault complex $(\p\cS^{k-1}\A^{p,1},\dbar)$. One will proves the more general fact that $$(\p\Omega^{.},d,F)\to (\p\cS^{k-.}\A^{.},d,F)\to (\p\A^{.}\cap \Dom(d),F,d)$$  are filtered 
quasi-isomorphism. Indeed the $E_{1}-$term of the spectral sequence of the third complex is  $\frac{\p\A^{p,q}
\cap \Dom(d)\cap\ker (\dbar)}{\dbar(\p\A^{p,q-1}\cap \Dom( d))}\,.$ The assertions are local over the base manifold $X$. One may work over an open chart $U$ of  $X$ which is biholomorphic to some ball $B(0,2)$ in $\CC^n
$. Let $x$ be the center of the chart.
 Then the covering $p^{-1}(U)\to U$ is isomorphic to $U\times p^{-1}(x)\to U$. The standard estimates for the resolution of the $\dbar-$operator on a strictly pseudoconvex domains will implies the lemma. 
  Let $f$ be a  $(p,q)-$form, $q\geq 1$, on the strictly pseudoconvex domain $U
\subset\subset \CC^n$ which belongs to $\Dom( d)\cap \Ker (\dbar)\subset \Ker(\dbar)$. Then $f\in \Dom
(\dbar)$. Let $N: L^{2}(U,\Lambda^{p,q})\to     L^{2}(U,\Lambda^{p,q})$ be the (bounded) $\dbar-$Neuman operator (see \cite{Tak} p. 280-282) so that $Ran(N)\subset Dom(\dbar\dbar^*+ \dbar^*\dbar )$ and $f=\dbar\dbar^*Nf+\dbar^*\dbar Nf$. The last term is vanishing 
for $\dbar N=N\dbar$ on $\Dom(\dbar)$. Then $f=\dbar(\dbar^*Nf)$ and $f\in H^s_{loc}(U)$ implies $(\dbar^*Nf)\in L^{2}(U,\Lambda^{p,q})$, $Nf\in H^{s+2}_{loc}(U)$ (usual Sobolev space in euclidian space). Hence if $U_{1}\subset\subset U$, the map $H^{s}(U)\cap \Ker(\dbar)_{L^{2}(U,\Lambda^{p,q})}\ni f\mapsto\dbar^*Nf\in  H^{s+1}(U_{1})$ is continuous.

 Let $B(0,1)\simeq U_{1}\subset\subset U$. 
Let $\alpha\in (\p\A^{p,q}
\cap \Dom( d)\cap\ker (\dbar))(U)$, $q\geq1$, which belongs to $\p\cS^{s}\A^{p,q}_{2}(U)$ for some $s\geq 0$. Then   $\beta=(\dbar^*N\alpha_{| U_{1}
\times\{y\}})_{y\in p^{-1}(x)} \in [L^2(p^{-1}(U_{1})\cap \Dom(\dbar)]\cap \p\cS^{s+1}\A^{p,q-1}(U_{1})$ is such that $\dbar\beta=\alpha_{|U_{1}}$.
This proves that   $$0\to\p\Omega^{p}\overset{i}{\to}\p(\A^{p,0}\cap \Dom( d))\overset
{\dbar}{\to}\p(\A^{p,1}\cap \Dom( d))\overset{\dbar}{\to}\ldots\to \p(\A^{p,n}\cap \Dom( d))\to 0$$ 
and
  $$0\to\p\Omega^{p}\overset{i}{\to}\p\cS^k\A^{p,0}\overset{\dbar}{\to}\p\cS^{k-1}\A^{p,
1}\overset{\dbar}{\to}\ldots \to\p\cS^{k-n}\A^{p,n}\to 0$$ are exacts. Hence $(\p\cS^{k-.}\A^{.},d,F)\to (\p\A^{.}\cap \Dom( d),F,d)$ is a filtered quasi-isomorphism.
\end{proof}   
\subsection{The Global Hodge to De Rham spectral sequence.}

\subsubsection{Global Sobolev spaces}\label{identity of Sobolev spaces in bounded geometry}
We assume now that $X$ is a hermitian manifold of bounded geometry (\cite{ShubinNantes} Appendix 1). An example is a covering of a compact hermitian manifold.

Let $E$ be a uniformly bounded hermitian vector bundle on $X$ (loc. cit.), then the  smooth sections with compact support are dense in the Sobolev space $S^{m}(\tilde X,\tilde E)$ ($m\in \NN$).
Moreover any $\C^{\infty}-$bounded uniformly elliptic differential operator is essentially self-adjoint (\cite{ShubinNantes} prop. 4.1).
Hence   $A:=(1+\Delta)^{\frac{1}{2}}: L^{2}(\tilde X,\oplus_{p}\Lambda^{p})\to L^{2}(\tilde X,\oplus_{p}\Lambda^{p})$ (defined through the functionnal calculus) defines the isomorphisms $A^{m}:S^{k}(\tilde X,\oplus_{p}\Lambda^{p})\to S^{k-m}(\tilde X,\oplus_{p}\Lambda^{p})$ (see e.g. Roe \cite{Roe} th.5.5).
From  the (non unitary) isomorphisms
\begin{eqnarray*}
(S^{j}(\tilde X,\oplus_{p}\Lambda^{p}),\, ||.||_{S^{j}(\tilde X,\oplus_{p}\Lambda^{p})}) &\overset{\simeq}{\to} (D(A^j),\, ||A^{j}.||_{L^{2}(\tilde X,\oplus_{p}\Lambda^{p})})
\end{eqnarray*}

define a new Sobolev norm by $||\alpha||_{j}=||A^j\alpha||_{L^2(\tilde X,\oplus_{p}\Lambda^{p})}$.

If $j\geq 1$, let 
\begin{eqnarray*}
d: (S^{j}(\tilde X,\Lambda^{p}), ||.||_{j}) &\to & (S^{j-1}(\tilde X,\Lambda^{p+1}), ||.||_{j-1})\\
 \dbar:=\dbar_{q}: (S^{j}(\tilde X,\Lambda^{p,q}), ||.||_{j}) &\to &  (S^{j-1}(\tilde X,\Lambda^{p,q+1}), ||.||_{j-1})
\end{eqnarray*}
be the bounded operators induced by the differentials $d$ and $\dbar$.    Then\\ $A^k: (S^{j}(\tilde X,\Lambda^{.}),||.||_{j}) \to (S^{j-k}(\tilde X,\Lambda^{.}), ||.||_{j-k})$ is an 
isometric isomorphism and
operators $\partial$, $\dbar$ and $A$ commute if the metric is K\"{a}hler. 

\subsubsection{}\label{the global Hodge to DeRham spectral sequence}  

The Hodge to De Rham spectral sequence is the spectral sequence for the filtered complex $(S,F)=(S^
{k-.}(\tilde X,\Lambda^{.}),d,F)$. Then
 $F^{p} S^{k-r}(\tilde X,\Lambda^{r})=\oplus_
{p'+q=r,\, p'\geq p}\;S^{k-(p'+q)}(\tilde X,\,\Lambda^{p',q})$.
Hence, $(E^{p,q}_{0}(S,F),d_{0})\simeq_{N(G)} (S^
{k-p-q}(\tilde X, \Lambda^{p,q}),\dbar)$ and the Kodaira's decomposition (\cite{Shu} p. 499) reads $$(E_{1}^{p,q}(S,F),d_{1})\simeq_{N(G,\CC)} \frac{ \Ker (\dbar_
{q})}{\Im( \dbar_{q-1})}\simeq_{N(G,\CC)}\frac{\H^{p,q}_{\dbar}(\tilde X)\oplus \Adh{ \Im(\dbar_{q-1})} }{\Im (\dbar_
{q-1})}\simeq_{N(G,\CC)} \H^{p,q}_{\dbar (2)}(\tilde X)\oplus \frac{\Adh{ \Im(\dbar_{q-1})} }{\Im (\dbar_
{q-1})}\,.$$

 \subsection{The degenerescence of the Hodge to De Rham spectral sequence.}
Let $\H_{d(2)}^{.}(\tilde X)$ be the space of square integrable $\Delta_{d}-$harmonic forms and $ \H^{p,q}_{\dbar(2)}(\tilde X)$ be the  space of square integrable $\Delta_{\dbar}-$ harmonic $(p,q)-$forms. Let $(\H^{.}_{d(2)}(\tilde X),F)$ be the complex with trivial differential, and Hodge filtration.
Assume the metric is K\"{a}hler. Then for all $r\geq 0$,
\begin{eqnarray*}
 E_{0}^{p,q}(\H^{.}_{d(2)}(\tilde X),F)= & \H^{p,q}_{ \dbar (2)}(\tilde X) &=E_{r}^{p,q}(\H^{.}_{d(2)}(\tilde X),F)\,.
\end{eqnarray*}

\begin{lemma}\label{degenerescence lemma}
Assume the metric is K\"{a}hler and of bounded geometry. Let $ \H^{p,q}_{\dbar(2)}(\tilde X)$ be the space of square integrable harmonic $(p,q)-$forms. Fix an integer $k$ greater than $\dim_{\RR}X$.  Let $H^{p,q}_{\dbar (2)}(\tilde X):=H^{q}((S^{k-.}(\tilde X,\Lambda^{p,.}),\dbar))$. Let $\tau$ be a torsion theory such that $\H^{p,q}_{\dbar (2)}(\tilde X)\to H^{p,q}_{\dbar (2)}(\tilde X)$ is an isomorphism in $\Mod(N(G,\CC))_{/\tau}$.  Then:
\item[i)]
The spectral sequence for $(S,F)=(S^
{k-.}(\tilde X,\Lambda^{.}),d,F)$  degenerates at $E_{1}$ in $\Mod(N(G,\CC))_{/\tau}$. The differential $d$ is strictly compatible with $F$ in $\Mod(N(G,\CC))_{/\tau}$.
\item[ii)] $E^
{p,q}_{1}(S,F)\simeq E^{p,q}_{\infty}(S,F)\simeq \H^{p,q}_{\dbar (2)}(\tilde X)$ in $\Mod(N(G,\CC))_{/\tau}$.
\end{lemma}
\begin{proof} 
\item[i)] Note that  $i:(\H^{.}_{d(2)}(\tilde X),F)\to  (S^{k-.}(\tilde X,\Lambda^{.}),d,F)$ is a morphism of filtered $N(G)-
$modules. 
Let $\tau$ be a torsion theory on $N(G)$ such that 
 $E_{1}(i)$ is an 
isomorphism in $\Mod(N(G))_{/\tau}$. Then $E_{r}(i)$ is an isomorphism for any $r\geq 1$. But the spectral sequence of $(\H^{.}_{d(2)}(\tilde X),F)$ degenerates so that the spectral sequence for $(S^
{k-.}(\tilde X,\Lambda^{.}),d,F)$ degenerates at $E_{1}$  in $\Mod(N(G))_{/\tau}$.
\item[ii)] The assertion follows for $(E_{1}^{p,q}(S,F),d_{1}) \simeq_{N(G,\CC)} \H^{p,q}_{\dbar (2)}(\tilde X)\oplus \frac{\Adh{ \Im(\dbar_{q-1})} }{\Im (\dbar_
{q-1})}\,.$
 \end{proof}
 \begin{definition}\label{Dolbeault torsion theory} The torsion theory  generated by $\C=\{\Cok(\H^{p,q}_{\dbar (2)}\to H^{p,q}_{\dbar (2)}),\,\, p,q\geq 0\,\}$ is the smallest torsion theory on $\Mod(N(G))$ which satisfies the above lemma. Let $\tau_{\dbar}$ (or $\tau_{\dbar,\tilde X}$) be the torsion theory it defines on $\Mod(N(G))$. 
\end{definition}

\begin{remark}
 The use of the unitary isometry $A^m$ between various Sobolev spaces proves that  the torsion theory $\tau_{\dbar}$ does not depend on the order $k$ in the Sobolev complex $(S^
{k-.}(\tilde X,\Lambda^{.}),d,F)$. Moreover, following Bruning Lesch \cite{BruLes} Th. 2.12 (smoothing of cohomology), it can be shown that the torsion theory $\tau_{\dbar}$  is indeed an invariant of the elliptic complex $(\C^{\infty}_{0}(\tilde X,\oplus_{p}\Lambda^{p}), d)$ and the complete K\"{a}hler metric. 
\end{remark}

\subsection{The case of a compact K\"{a}hler manifold $X$}
\begin{lemma} Let $X$ be a compact complex hermitian manifold and let  $E$ be a hermitian vector bundle. Then $\Gamma(X,\p\cS^{j}\A^{p,q}(E))=S^{j}(\tilde X,\Lambda^{p,q}\otimes \tilde E)$.
\end{lemma}

\begin{definition}[\cite{Shu} prop 1.13, \cite{Luc} chap. 1]
A bounded complex $(L^.,d^.)$ of Hilbert $G-$modules is $G-$Fredholm if $\oplus_{i}d_{i}:\oplus_{i}L_{i}\to \oplus_{i}L_{i}$ is $G-$Fredholm (see \ref{fredholm}).
\end{definition}
In the following lemma, the manifold $\tilde X$ is not necessarily connected.
\begin{lemma}[\Ati, see also \cite{Shu}, \cite{Luc}] Let $\tilde X\to \tilde X/G=X$ be a $G-$covering of a compact complex manifold (see \ref{$G-$cover}).
Then the complexes of Hilbert modules 
\begin{eqnarray*}
( S^{k-.}
(\tilde X,\,\Lambda^{p,.}),\dbar) &\text{ and }&( S^{k-.}(\tilde X,\Lambda^{.}),d)
\end{eqnarray*}
are $G-$Fredholm. 
\end{lemma}
\begin{corollary}\label{example of torsion modules} 
With the same hypothesis,

\item[1)] The $N(G)-$module 
$ \Adh{ \Im (\dbar_{ q}) }  / \Im( \dbar_{ q}) $  is  a $\dimG-$torsion module. 
\item[2)] $\forall x\in \Adh{\Im(\dbar_{q})}$, there exists $ r\in N(G)$ such that $ker (r)=0$ and $rx\in \Im(\dbar_{q})$. Therefore $$\U(G)\otimes_{N(G)} \frac{\Adh{\Im(\dbar_{q})}}{\Im(\dbar_{q})}=0\,.$$
\end{corollary}

\begin{proof}  \item[1)] Lemma 2.12 of \cite{Shu} implies that $\Op=\dbar_{q}: H_{1}= \Adh{\Im(\dbar^{*}_{q
+1})}\to H_{2}=\Adh{\Im( \dbar_{q})}$ is $G-$Fredholm, hence  lemma 1.15 of \cite{Shu} implies 
that $\Im (\Op)$ is $G-$dense in its closure: $\forall\epsilon >0$, there exists $L_{\epsilon}\subset \Im (\Op)$, a closed $G-$invariant subspace, such that $\dimG \Adh{\Im( \Op)}^{H_{2}}\ominus L_{\epsilon}\leq \epsilon$. In this example, we may take $L_{\epsilon}:= \Im(\dbar\circ 1_{[\eta_{\epsilon},+\infty[}(\Delta_{\dbar}))$ (functional calculus) with $\eta_{\epsilon}>0$ small enough. 

 This is equivalent to 
$\dimG \Im (\Op)=\dimG \Adh{\Im (\Op)}$ hence $\dimG \frac{\Adh{\Im  
(\Op)}}{\Im (\Op)}=0$.
\item[2)] Apply the lemma \ref{T-lemma}.
\end{proof}

Analogue proof holds for an elliptic complex of vector bundles: 
\begin{theorem} 
 Let $\tilde X\to \tilde X/G=X$ be a $G-$cover of a compact complex manifold.
Let $(E^{.},d^{.})$ be an elliptic complex ($d^{.}$ is a differential operator of order one) between vector bundles on $X$. Let $(S^{j-.}(\tilde X,\tilde E^{.}),d^{.})$ be the associated Sobolev complex on $p: \tilde X\to X$.
\item[1)]  The complex $(S^{j-.}(\tilde X,\tilde E^.),d)$ is $G-$Fredholm.
\item[2)] The module $\frac{\Adh{\Im( d)}}{\Im (d)}$ is of $G-$dimension zero and $\U(G)\otimes_{N(G)} \frac{\Adh{\Im (d)}}{\Im (d)}=0$. 
\end{theorem}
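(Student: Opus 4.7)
The plan is to imitate exactly the argument given in the previous Corollary \ref{example of torsion modules} for the $\dbar$-complex, replacing $\dbar_{q}$ by the first-order elliptic differential $d$ of the complex $(E^{.},d)$. The three ingredients to recycle are: Atiyah's Hilbert-module $G$-Fredholm statement for elliptic operators on compact quotients, Shubin's Lemmas 2.12 and 1.15 (as used already for $\dbar_{q}$), and the $T$-lemma \ref{T-lemma}.

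For part (1), I would first recall that ellipticity of $(E^{.},d)$ means that the symbol sequence is pointwise exact off the zero section, which is equivalent to saying that the associated Laplacians $\Delta_{i}=d_{i}^{*}d_{i}+d_{i-1}d_{i-1}^{*}$ are elliptic second-order operators on each bundle $E^{i}$. Pulled back to $\tilde X$, each $\Delta_{i}$ is a $G$-equivariant elliptic operator on $p^{*}E^{i}$ whose quotient lives on the compact base $X$; Atiyah's theorem (as recalled from \Shu, \Luc) then guarantees that the spectral projection $1_{[0,\lambda]}(\Delta_{i})$ has finite $G$-trace for $\lambda>0$ small enough, and that $\Delta_{i}$ is $G$-Fredholm. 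Using the isometric isomorphism $A^{s}:S^{j,.}(\tilde X)\to S^{j-s,.}(\tilde X)$ to absorb the Sobolev shift in the grading $S^{j-i}(\tilde X,p^{*}E^{i})$, one deduces that each individual $d_{i}$ is $G$-Fredholm between Hilbert $G$-modules, hence so is the total operator $\oplus_{i}d_{i}$, which is what (1) asserts.

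For (2), once the $G$-Fredholm property is in hand, the same two-step argument as in the Corollary \ref{example of torsion modules} applies verbatim. First, Lemma 2.12 of \Shu yields that $d_{i}:\Adh{Im d_{i}^{*}}\to \Adh{Im d_{i}}$ is $G$-Fredholm; then Lemma 1.15 of \Shu provides closed $G$-invariant subspaces $L_{\epsilon}\subset Im d_{i}$ (concretely $L_{\epsilon}=Im(d_{i}\circ 1_{[\eta_{\epsilon},+\infty[}(\Delta_{i}))$) with $\dimG(\Adh{Im d_{i}}\ominus L_{\epsilon})\leq \epsilon$, so that $\dimG Im d_{i}=\dimG \Adh{Im d_{i}}$ and thus $\dimG \frac{\Adh{Im d_{i}}}{Im d_{i}}=0$; summation gives the statement for the whole complex. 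Second, to kill the module after tensoring with $\U(G)$, I apply the $T$-lemma to each $x\in \Adh{Im d}$ with $T=d$ and $\zeta=x$: it supplies $r\in N(G)$ injective with dense range such that $rx\in Im d$. Since such an $r$ is a weak isomorphism and therefore becomes invertible in $\U(G)$, the class of $x$ vanishes in $\U(G)\otimes_{N(G)}\frac{\Adh{Im d}}{Im d}$; as $x$ is arbitrary, the tensor product is zero. The only genuine obstacle is the first step (the $G$-Fredholm statement), since once it is established everything reduces to a mechanical transcription of the $\dbar$-case; this obstacle is mild as it is the content of the cited results of Atiyah, Shubin and L\"uck applied to the Laplacians $\Delta_{i}$.
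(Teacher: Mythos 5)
Your proposal is correct and follows exactly the route the paper intends: the paper gives no separate argument for this theorem, stating only that the ``analogue proof'' of Corollary \ref{example of torsion modules} holds, and your transcription (Atiyah/Shubin for the $G$-Fredholm property of the Laplacians $\Delta_{i}$, Shubin's Lemmas 2.12 and 1.15 for $G$-density of $Im\, d_{i}$ in its closure, and the $T$-lemma \ref{T-lemma} for vanishing after tensoring with $\U(G)$) is precisely that analogue. Nothing is missing.
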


This implies that if $d:L^2(\tilde X,E)\to L^2(\tilde X, F)$ acts as an unbounded elliptic operator, then $\U(G)\otimes\frac{\Adh{\Im (d)}}{\Im( d)}=0$: One uses conjugation by $(1+d^*d)^{-1}$ and $(1+dd^{*})^{-1}$. One may also considers currents in negative Sobolev scale (e.g. Dirac mesure on a point).

\begin{corollary}[A $\partial\dbar-$lemma]\label{A ddbar-lemma} Let $p:\tilde X\to (X,\omega)$ be a $G-$covering of a compact K\"{a}hler manifold. Let $\alpha$ be a $d-$closed square integrable $(p,q)-$form on $\tilde X$ which is orthogonal to the harmonic forms. Then there exists a weak isomorphism $r\in N(G)$, there exists a square integrable form $\gamma$ on $\tilde X$ such that $r\alpha=\partial\dbar \gamma$. 
\end{corollary}
\begin{proof} Above corollary implies that there exists a weak isomorphism $r\in N(G)$  and $\gamma_{1}\in\Adh{\Im\dbar}$, $\gamma_{2}\in \Adh{\Im\dbar^{*}}$ such that $r\alpha=\partial (\gamma_{1}+\gamma_{2})$. Then there exists a weak isomorphism $r'$ such that   $r'\gamma_{1}=\dbar\gamma_{3}$ and $r'\gamma_{2}=\dbar^{*}\gamma_{4}$. Hence $r'r\gamma-\partial\dbar\gamma_{3}=\partial\dbar^{*}\gamma_{4}$ is $\dbar-$closed. But the metric is K\"{a}hler, hence $\partial^{*}\dbar=\dbar\partial^{*}$. Therefore $\partial\dbar^{*}\gamma_{4}\in \Ker(\dbar)\cap\Ker(\dbar)^{\perp}$ is vanishing.
\end{proof}

The combination of sheaf theory and torsion theory enables to recover a result of Dodziuk \cite{Dod} in the case of Hermitian manifold. Standard sheaf theory proves  that unreduced De Rham $l^{2}-$cohomology groups and unreduced simplicial $l^{2}-$cohomology groups are isomorphic. The use of torsion theory is needed to provide an isomorphism between the reduced cohomology groups. 
\begin{corollary}\label{Dodziuk} Let $p:\tilde X\to \tilde X/G=X$ be a $G-$covering. Then the combinatorial reduced $l^{2}-$cohomology and the analytical reduced
 $l^{2}-$ cohomology are isomorphic in $N(G)_{/\tau_{dim}}$.
\end{corollary}
\begin{proof} Let $f:K\to X$ be a $C^{1}-$triangulation of $X$ (\cite{Whi}): $K$ is a rectilinear complex in some euclidian space and $f$ is a $C^{1}-$map which is a homeomorphism. In what follows, we identify $K$ and $M$.

Let $K'$ be the first barycentric subdivision of $K$ (\cite{Lef}, \cite{SeiThr}). Let $v\in K_{0}$ be a vertex in $K$. Let $F_{v}=St(v,K')$  be the closed star of $v$ in $K'$.
The set $F_{v_{0}}\cap\ldots\cap F_{v_{k}}$ is non empty if and only if $[v_{0},\ldots,v_{k}]$ is a simplexe of $K$. 
 Hence the simplicial complex defined by the nerve of the closed covering $\M=\{F_{v},\, v \text{ a vertex of }K \}$ is identified with $K_{.}$ the simplicial complex of $K$. 
Let $S=[v_{0},\ldots,v_{k}]\in K_{k}$ be a $k-$dimensional simplex of $K$. Define $F_{S}:= F_{v_{0}}\cap\ldots\cap F_{v_{k}}$. Then $F_{S}=S^{*}$  is equal to the dual cell of $[v_{0},\ldots,v_{k}]$ (which is homeomorphic to a $(n-k)-$closed ball).

 If $\F$ is a sheaf on $X$, let $(\C^{.}(\M,\F),\delta)$ be the differential sheaf $U\to \Pi_{S\in K_{.}}\F(F_{S}\cap U)$ (\cite{God} II.5.2). 
  Then  $(\C^{.}(\M,\p\CC),\delta)$ is a resolution of $\p\CC$, 
  $(\C^{.}(\M,\p\CC),\delta)\to (\C^{.}(\M,\p\cS^{k-.}\A^{.}),\delta+d)$ and $(\p\cS^{k-.}\A^{.},d)\to (\C^{.}(\M,\p\cS^{k-.}\A^{.}),\delta+d)$ are quasi-isomorphisms of $\N(G)-$sheaves.  Each term in theses complexes are $\Gamma(X,.)-$acyclic for $F_{S}$ is contractible and $\p\CC$ is locally constant; and the sheaves  $\p\cS^{k-.}\A^{.}$ are fine. Hence (\cite{God} II.5.2):
  $$H^{k}(X,\p\CC)\simeq H^{k}_{\delta}(\Gamma(X,\C^{.}(\M,\p\CC)))\simeq H^{k}_{d(2)}(\tilde X)\,.$$

   Let $(\tilde K_{.},\delta)$, $\tilde K'$ be the pullback simplicial structures, and define $\tilde F_{\tilde S}:=\tilde F_{\tilde v_{0}}\cap\ldots\cap \tilde F_{\tilde v_{k}}$. Then $ \tilde F_{\tilde S}=\tilde S^{*}$ is equal to the dual cell of $\tilde S$. Hence 
   $$ H^{k}_{\delta}(\Gamma(X,\C^{.}(\M,\p\CC)))\simeq_{N(G)} H^{k}(Hom_{\CC[G]}(C_{.}(\tilde K_{.}),l^{2}(G)))\,$$
(we  use the right $\CC[G]-$module structure in $\tilde K_{.}$ and the $N(G)-\CC[G]-$bi-module structure in $l^{2}(G)$). 
 The reduction with respect to the torsion dimension $\tau_{dim}$ gives the result, for the modules $\frac{\Adh{\Im( d)}}{\Im (d)}$ defined by the 
 combinatorial differential (see \ref{definition of dim tor}) or analytical differential (see above) have $N(G)-$dimension zero. 
\end{proof}

Note that Dodziuk proves that the isomorphism is given by integration of harmonic forms on the cells of a pull back of a triangulation. 

\subsection{Pure Hodge structures}
\begin{theorem}\label{structurepure}Let $X$ be a connected Hermitian manifold and $p:\tilde X\to X$ be a covering, not necessarily connected, with covering transformations group $G$. 
Let $k\geq n=\dim_{\CC}X$.
\item[1)\, i)] The morphism $((\p\Omega^{.},d),\, F)\to ((\p\cS^{k-.}\A^{.},d),\,F)$ is a filtered quasi-isomorphism of $\N(G)-$sheaves such that $Gr_{F}\p\cS^{k-.}\A^{.}$ is $\Gamma-$acyclic.
\item[ii)] The morphism $(\p\Omega^{.},d) \to (\p\cS^{k-.}\A^{.},d)$ is a quasi-isomorphism 
of $\N(G)-$sheaves. It defines a real structure on $\HH(X,(\p\Omega^{.},d))$ compatible with the real structure given by the Godement resolution and the
pseudo-isomorphism (\ref{pseudo-morphism}) $\C^{.}(\p\CC)\dto (\p\Omega^{.},d) $.
\item[2)] Assume the manifold $X$ is compact. Then the
 Fr\"{o}licher spectral sequence $$H^q(X,\p\Omega^p)\abut \HH^{p+q}(X,(\p\Omega^{.},d))\simeq_{N(G,\CC)} H^{p+q}(X,\p\CC)\,,$$ is isomorphic to the Hodge to De Rham spectral sequence $H^{p,q}_{\dbar(2)}(\tilde X)\abut H^{p+q}_{d(2)}(\tilde X)$ (see \ref{the global Hodge to DeRham spectral sequence}). 
 \item[3)] Assume the manifold $X$ is a compact K\"{a}hler manifold.
 Let $\tau$ be a torsion theory on $\Mod(N(G))$ greater than $\tau_{\dbar}$. Then the Fr\"{o}licher spectral sequence
  degenerates in $\Mod(N(G))_{/\tau}$. Hence 
 $d$ is strict for $F$ in $\Mod(N(G))_{/\tau}$: $$Gr_{F}^{p}H^{p+q}(X,\p\CC)\simeq_{\Mod(N(G))_{/\tau}}H^{q}(X,\p\Omega^{p})\,.$$
\item[4)] Assume moreover that $\tau$ is real. Then the Hodge filtration on the hypercohomology\\  $F^{.}\HH(X,\p\Omega^{.}):=\im(\HH(X,F^.\p\Omega^{.})\to \HH(X,(\p\Omega^{.},d)))$  and its complex conjugate $\bar F$ are
$k-$opposed on $\HH^{k}(X,\p\Omega^{.})$ in $\Mod(N(G))_{/\tau}$.
 It defines a pure Hodge structure of 
weight $k$ on  $H^{k}(X,\p\RR)$ in $\Mod(N(G))_{/\tau}$
(see definition \ref{definition of MHS}).
\end{theorem}
\begin{proof}\item[1)\, i)]  was proved in lemma \ref{holomorphic to de 
rham}. 
\item[ii)] $(\p\Omega^{.},d) \to (\p\cS^{k-.}\A^{.},d)$ is a quasi-isomorphism for $\p\CC\to (\p\Omega^{.},d)$ and $\p\CC\to (\p\cS^{k-.}\A^{.},d)$ are resolutions.
But $(\A^{.}_{\CC},d)=(\A^{.}_{\RR},d)\otimes_{\ZZ}\CC$ and the following diagram is commutative: $$
\begin{diagram}\node{\p\RR}\arrow{e,t}{}\arrow{s,l}{}\node{(\p\Omega^{.},d)}\arrow{s,l}{}\\
\node{(\p\cS^{k-.}\A^{.}_{\RR},d)}\arrow{e,t}{}\node{(\p\cS^{k-.}\A^{.}_{\CC},d)}
\end{diagram}
$$
 \item[(2)] Sheaves $\p\cS^{k-.}\A^{.}_{\RR}$ and $\p\cS^{k-.}\A^{.}_{\CC}$ are $\Gamma(X,.)-$acyclic and $\Gamma(X,\p\cS^j\A^{.})= S^{j}(\tilde X,\Lambda^{.})$ for $X$ is compact. Hence $\HH^{.}(X,\p\Omega^{.})\simeq_{N(G)} H^{.}(S^{k-.}(\tilde X,\Lambda^{.}),d)$. 

Note that $\p\cS^j\A^{p,q}\tilde{\to}Gr_{F}^p(\p\cS^j\A^{p
+q})$ is $\Gamma(X,.)-$acyclic.
Hence the spectral sequence for $(\HH^{.}(X,\p\Omega^{.}), F)$ is (isomorphic) given 
by the spectral sequence of $N(G)-$modules 
$$H^{p+q}(S^{k-.}(\tilde X,\Lambda^{.}),d)\Leftarrow E_{1}^
{p,q}=H^{q}(S^{k-.-p}(\tilde X,\,\Lambda^{(p,.)},\dbar))\,.$$ 

\item[3)] From lemma \ref{degenerescence lemma}, the above spectral sequence degenerates in $\Mod(N(G))_{/\tau}$ and $\H^{(i,n-i)}_{d(2)}(\tilde X)\to E_{1}^{(i,n-i)}\simeq  E_{\infty}^{(i,n-i)}\,$ is an isomorphism. Hence $\oplus_{i\geq p}\H^{(i,n-i)}_{d(2)}(\tilde X)\to F^{p}H^{n}(X,\p\CC)$ is an isomorphism.
\item[4)] If moreover $\tau$ is real then 
$\Adh{ \oplus_{i\geq q}\H^{(i,n-i)}_{d(2)}(\tilde X) } =\oplus_{i\geq q} \H^{(n-i,i)}_{d(2)}(\tilde X)\to \bar F^{q}H^{n}(X,\p\CC)$ (\ref{conjugate filtration}) is also an isomorphism. From \cite{Del} (1.2.4), we conclude that in $\Mod(N(G))_{/\tau}$, the filtrations $F$ and $\bar F$ on $H^{n}(X,\p\CC)$ are $n-$opposed.
\end{proof}
\begin{example}\item[1)] \label{structurepure2}
Let $\alpha:\p\RR\to (\p\Omega^{.},d^{.})$ be the natural map. We have seen that $(\p\RR,(\p\Omega^{.},F),\alpha)$ is a 
CHC of sheaves of weight $0$ (\ref{definition of CHC}) modulo $\tau_{dim}$ when $G$ is Galois, or $\gamma^{*}\tau_{\dbar}$ (\ref{ring extension}) in general. In the following diagram, the maps $R\Gamma(i)$ (Hodge to De Rham) and $m$ ($\Gamma-$acyclic sheaves) are 
filtered quasi-isomorphisms.
$$
\begin{diagram}
\node{R\Gamma(\p\RR)}\arrow{e,t}{R\Gamma(\alpha)}\node{R\Gamma(\p\Omega^
{.},F)}\arrow{e,t}{R\Gamma(i)}\node{R\Gamma(\p \cS^{k-.}\A^{.},d,F)}\\
\node{}\node{}\node{\Gamma(\p \cS^{k-.}\A^{.},d,F)}\arrow{n,l}{m}
\end{diagram}
$$
Therefore if $r\geq 1$, $$  E_{r}(m,F)^
{-1}\circ E_{r}(R\Gamma(i),F): E_{r}(R\Gamma(\p\Omega^{.},F))\to E_{r}(\Gamma(\cS^{k-.}\A^{.},d,F))$$ defines an 
isomorphism of spectral sequences. The second degenerates modulo $\tau_{dim}$ when $G$ is Galois ($E_{1}\simeq E_{\infty}$ in $\Mod(N(G))_{/\tau_{dim}}$). 
\item[1bis)] Let $\tau$ be a torsion theory such that $(\p\RR,(\p\Omega^{.},F),\alpha)$ is a CHC of sheaves of weight $0$ modulo $\tau$.
Let $k,m\in \ZZ$. One checks that $((2i\pi)^{k}\ZZ\otimes\p\RR[m],(\p\Omega^{.}[m],F[k+m]),\alpha[m]. (2i\pi)^{k} )$, where $F[k]^{p}:=F^{p+k}$ is the filtration shifted $k-$steps to the left, is a CHC of sheaves of weight $m-2k$ modulo $\tau$. Note the new conjugation induced by $\alpha (2i\pi)^{k}$ is $(-1)^{k}$ times the original one.
\item[2)] Assume that $\tilde X\to X$ is such that each connected component $
\tilde X_{i}$ of $\tilde X$ is compact and $G$ is transitive on fibers. Let $G_{i}$ be 
the stabiliser of $\tilde X_{i}$ and let $p_{i}$ be the restriction of $p$ to $X_{i}$.

 Then $d$ and  $\dbar$ have closed ranges. Indeed $$S^
{k}(\tilde X,\,\Lambda^{p})\ni\alpha\mapsto (g\mapsto g^{*}\alpha_{|\tilde X_{i}})\in l^{2}(G,  S^{k}(\tilde X_{i},\,\Lambda^{p}))^{G_{i}}$$ is a $G-$equivariant isometric isomorphism which commutes 
with $d$ and $\dbar$.
 But the functor of invariant with respect to $G_{i}$ is exact on $\QQ[G_{i}]-$modules, hence $H^{k}_{(2)}(\tilde X)\simeq l^{2}(G,H^{k}(\tilde X_{i}))^{G_{i}}$. The cohomology is reduced, and the Hodge structure is isomorphic to that of  $\tilde X_{i}$ twisted by that of $l^{2}(G,\CC)$.

If $\F$ is a sheaf for which $\p\F$ is defined then the $l^{2}-$cohomology is separated and 
$$H^{k}_{(2)}(X,\p\F) \simeq_{N(G)} l^{2}(G,H^{k}(X_{i},p_{i}^{*}\F))^{G_{i}}$$ and $$\dimG    l^{2}(G,H^{k}(X_{i},p_{i}^{*}\F))^{G_{i}} = \frac{\dim_{\CC}H^{k}(X_{i},p_{i}^{*}\F)}{|G_{i}|}\,.$$
\end{example}
\subsection{Addendum: Smoothing of cohomology} 
 It is well known that the cohomology of currents is isomorphic to the cohomology of smooth forms. Here we impose moreover that forms are square integrable in the fiber of $p$.

\begin{lemma}\label{smoothingofcohomology}
\item[1)] Let $U$ be some open set in $X$. Then $H^{.}(\Gamma(U,\p\cS^{\infty}\A^{.}),d)\to H^{.}(\Gamma(U,\p\cS^{k-.}\A^{.}),d)$ is an isomorphism. 
\item[2)] Assume  that $X$ is a K\"{a}hler manifold of bounded geometry. Then for any closed $l-$form $\alpha$ in $ \Dom[(1+\Delta)^{\frac{r}{2}}]$ on $\tilde X$, there exists $(\beta,\gamma)\in S^{r-1}(\tilde X,\Lambda^{l-1})\times S^{\infty}(\tilde X,\,\Lambda^{l})$ such that $\alpha=d\beta+\gamma$.
 \end{lemma}
 \begin{proof}\item[1)] Note that $(\p\cS^{\infty}\A^.,d)\to (\p\cS^{k-.}\A^.,d)$ is a quasi-isomorphism. The  result follows for theses sheaves are flabby.
 \item[2)] This follows from \cite{BruLes} th.2.12 and (\ref{identity of Sobolev spaces in bounded geometry}) above.
\end{proof}
We refer to \cite{BruLes} th.2.12 or th.3.5 for more general results on smoothing of cohomology in Hilbert complexes.
\subsection{An application: Mixed Hodge structure on the $l^{2}-$cohomology of a covering of a normal crossing divisor}
 Let $Y=Y_{1}\cup \ldots \cup Y_{p}$ be a compact normal crossing divisor in a smooth K\"{a}hler manifold.  Let $p:\tilde Y\to Y$ be a covering of $Y$. It induces covering maps $p:=p_{q}:\tilde Y_{q}\to Y_{q}$ with $Y_{q}=\sqcup_{i_{0}<\ldots<i_{q}}Y_{i_{0}}\cap\ldots \cap Y_{i_{q}}$. 

A classical generalisation of the Mayer-Vietoris argument relates the cohomology of $Y$ in  any coefficient systems to the cohomology of the subspace $Y_{q}$. This defines a mixed Hodge structure on $H^{.}(Y,\p\CC)$ in $\Mod(N(G))_{/\tau}$ for each $H^{.}(Y_{q},\p\CC)$ carries a Hodge structure in $\Mod(N(G))_{/\tau}$, if $\tau$ is big enough:

\subsubsection{} We follow the notations of \cite{Elz} section 3.5.
 Let $0\leq j\leq q$ and consider an increasing sequence of integers $1\leq i_{0}<\ldots<i_{q}\leq p$. The natural injections $\cap_{0\leq k\leq q}Y_{i_{k}}\to \cap_{0\leq k\leq q, \, k\not= q}Y_{i_{k}}$ define a closed immersion $\lambda_{j,q}: Y_{q}\to Y_{q-1}$. Let $\Pi:=\Pi_{q} :Y_{q}\to Y$ be the canonical projection.

The map $\lambda_{j,q}$ induces the morphism  
 $\lambda^{*(2)}_{j,q}:{\Pi}_{*}(p_{q-1})_{*(2)}\F_{Y_{q-1}}\to {\Pi}_{*}(p_{q})_{*(2)}\F_{Y_{q}}$ of sheaves over $Y$, with $\F_{Y_{q}}$ one of the sheaf $\CC_{Y_{q}}$, $\RR_{Y_{q}}$ or $\Omega^{.}_{Y_{q}}$.  One defines a boundary map $\delta^{*(2)}_{q-1}=\sum_{j\in [0,q]}(-1)^{j}\lambda^{*(2)}_{j,q}$.

Let $( \Pi_{*}\p\RR_{Y_{.}},\delta_{.}^{*(2)}, W_{.})$ be the complex of sheaves with increasing weight filtration \\ $W_{-q}( \Pi_{*}(p_{.})_{*(2)}\RR_{Y_{.}})=\sigma_{.\geq q}  \Pi_{*}(p_{.})_{*(2)}\RR_{Y_{.}}\,.$

Let $s(\p\Omega^{.}_{Y_{..}})$ be the single complex associated to the double complex of sheaves defined by 
$\p\Omega^{p}_{Y_{q}}=\Pi_{*}{p_{q}}_{*(2)}\Omega^{p}_{Y_{q}}$ and differential $d+\delta_{q}^{*(2)}$. 

The weight filtration on $s(\p\Omega^{.}_{Y_{..}})$ is the opposite of the filtration by the second index:  $W_{-q}s(\p\Omega^{.}_{Y_{..}})=s(\sigma_{..\geq q}\p\Omega^{.}_{Y_{..}})$.

The Hodge filtration corresponds to the filtration by the first index: \\ $F^{p}s(\p\Omega^{.}_{Y_{..}})=s(\sigma_{..\geq q}\p\Omega^{.}_{Y_{..}})$.
Let $i$ be the morphisms of sheaves induced by $i:\RR\to \O$.
\begin{theorem} Let $\tau$ be a real torsion theory on $\Mod(N(G))$ such that for any $q$, $\tau$ is greater than $\tau_{\dbar_{Y_{q}}}$.
\item[1)] Then $(R\Gamma(Y,\p\RR), (R\Gamma(Y, \Pi_{*}{p_{.}}_{*(2)}\RR_{Y_{.}}),W), (R\Gamma(Y, s(\p\Omega^{.}_{Y_{.}}),W,F),R\Gamma(i))$ is a  $\tau-$mixed \\Hodge complex.
\item[2)] The weight spectral sequence $H^{p}(Y_{q},{p_{q}}_{*(2)}\CC_{Y_{q}})\abut H^{p+q}(Y,\p\CC)$ with differential $d^{p,-q}_{1}=\delta_{q}^{*(2)}$ is a spectral sequence of Hodge structures in $\Mod(N(G))_{/\tau}$ and degenerates at $E_{2}$.
\item[] The Hodge spectral sequence $\HH^{q}(Y, (\Pi_{*}{p_{.}}_{*(2)}\Omega^{p}_{Y_{.}},\delta))\abut H^{p+q}(Y,\p\CC)$ degenerates at $E_{1}$. 
Hence $\HH^{n}(Y, F^{p}s(\p\Omega^{.}_{Y_{..}}))\to \HH^{n}(Y, s(\p\Omega^{.}_{Y_{..}}))$ is a monomorphism in $\Mod(N(G))_{/\tau}$.

\item[3)] The filtration $W[n]_{.}=W_{.-n}$ such that \\ $W[n]_{.} H^{n}(Y,\p\CC)=Im(\HH^{n}(Y,(\sigma_{..\geq n-.}\p\CC_{Y_{..}},\delta))\to H^{n}(Y,\p\CC))$ and \\ $F^{.}H^{n}(Y,\p\CC)\simeq \HH^{n}(Y,s(\sigma_{.\geq p}\p\Omega^{.}_{Y_{..}}))$ induces a mixed Hodge structure on $H^{n}(Y,\p\CC)$ in $\Mod(N(G))_{/\tau}$.
\end{theorem}
\begin{proof} This is a consequence of the theorem \ref{lemme des deux filtrations} and general theorems on cohomological mixed Hodge complexes (see \cite{Deltrois}, \cite{PetSte} Th.3.18 \rm I) and II)): 

Let $\K_{Y}:=((\p\RR,(\Pi_{*}{p_{.}}_{*(2)}\RR_{Y_{.}},W), (s(\p\Omega^{.}_{Y_{..}}),W,F), i)$.
The complex of sheaves\\ 
$(\Pi_{*}\p\CC_{Y_{.}},\delta_{.}^{*(2)})$
 is a resolution of $\p\CC$ for it is isomorphic to the usual Mayer-Vietoris resolution of $Y$ (\cite{Elz} 3.5.4) tensorised by the locally constant sheaf $\p\CC$.
Theorem \ref{structurepure} implies that  $(R\Gamma(Y, Gr^{W}_{-q}\K_{Y}),F)$ is a $\tau-$Hodge complex of weight $-q$ such that $H^{n}R\Gamma(Y, Gr^{W}_{-q}\K_{Y})\simeq H^{n-q}(Y_{q},\p\CC)$.   But $i\otimes 1_{\CC}$ is a quasi-isomorphism. Hence $\K_{Y}$ is a  $\tau-$cohomological Hodge complex of sheaves and $(R\Gamma(Y,\p\RR), (R\Gamma(Y, \Pi_{*}{p_{.}}_{*(2)}\RR_{Y_{.}}),W), (R\Gamma(Y, s(\p\Omega^{.}_{Y_{.}}),W,F),R\Gamma(i))$ is a $\tau-$mixed Hodge complex.
\end{proof}

\begin{example}
 Assume the irreducible components of $p^{-1}(Y)$ are compact. Then $\tau$ may be the trivial torsion theory.

\end{example}


\section{Mixed Hodge structures on the complement of a normal crossing divisor.}
We follow now the strategy of Deligne \cite{Del} to put a mixed Hodge structure on the $l^{2}-$cohomology groups associated to $p: \tilde X\setminus p^{-1}(D)\to X\setminus D$. 

In section \ref{Local setting}, we prove that the Leray spectral sequence which abuts to $H^{.}(X\setminus D,\p\CC)$ is isomorphic to the weight spectral sequence associated to the complex of square integrable forms with logarithmic singularity along the pullback of $D$.

In section \ref{global setting}, we use the existence of the Hodge structure mod$\tau$ on the $l^{2}-$cohomology of the compact manifolds $X$, $D_{i},\, D_{i}\cap D_{j},\ldots $ as it was developed in the previous section. We prove that the weight spectral sequence is a spectral sequence of Hodge structures mod$\tau$ which degenerates at $E_{2}$. 

In section \ref{interpretation}, we give a description of the first page in the weight spectral sequence in term of the homology associated to the $l^{2}-$Gysin morphisms of  the inclusions $\ldots\to D_{i}\cap D_{j}\to D_{i}\to X$. Section \ref{premiers exemples} will provide additional informations.

\subsection{Local setting}\label{Local setting}

We refer to \cite{Del} and \cite{PetSte} Chap. 4. for this section.
Let $X$ be a  complex manifold and $D\subset X$ be a normal crossing 
divisor. Let $j:U=X\setminus D\to X$ be the injection. Let $\dlog{.}{D}$ be the $\O_{X}-$subsheaf of $j_{*}\Omega^{.}_{U}$ of 
meromorphic forms with logarithmic poles on $D$: $\alpha\in \dlog{p}{D}$ if $\alpha
$ and $d\alpha:=j_{*}dj^{*}\alpha$ have pole of order at most one on $D$. 
Let $(z):V\to D(0,1)^n$ be a holomorphic chart centered at $x\in X$ such that   $D\cap V=\{z_{1}.\ldots . z_{k}=0\}$.  Then $\frac{dz_{1}}{z_{1}},\ldots,\, \frac{dz_{k}}{z_{k}},dz_{k+1},
\ldots,dz_{n}$ is a free basis of $\dlog{1}{D}_{x}$ and $\dlog{p}{D}=\wedge^{p}\dlog
{1}{D}$ is a locally free $\O_{X}-$sheaf.

Let $(\Lambda^{.}\{ \frac{dz_{1}}{z_{1}},\ldots, \frac{dz_{k}}{z_{k}}\})$ be the free antisymetric $\CC-$algebra built on $\frac{dz_{1}}{z_{1}},\ldots,\, \frac{dz_{k}}{z_{k}}$. The weight filtration $W$ is defined by 
$$W_{m}\dlog{p}{D}=\left\lbrace
\begin{array}{ll}
0 & \text{for } m<0\\
\Omega_{X}^{p-m}\wedge \dlog{m}{D} &  \text{for } 0\leq m\leq p\\
\dlog{p}{D} &  \text{for }  p\leq m\\
\end{array}\right. $$
 
 \subsubsection{Residues}
Let $D=\cup_{t\in T}D_{t}$ be the decomposition of $D$ into smooth irreducible 
components. If $I$ is a subset of $T$, let  $a_{I}:D_{I}=\cap_{t\in I}D_{t}\to X$ be the natural injection. Let $m\in \NN\setminus\{0\}$. Then set $D_{m}=\sqcup_{|I|=m}D_{I}$ and $a_{m}=\sqcup_{|I|=m} a_{I}$. Define $D_{\emptyset}=D_{0}=X$,  $a_{\emptyset}=a_{0}=Id$.
Let $ t\in T$. The residu map 
$$Res_{t}=Res_{D_{t}}:\dlog{.}{D}\to (a_{t})_{*}\Omega_{D_
{t}}^{.-1}(log (D_{t}\cap(\sum_{j\not= t}D_{j}))\, $$ is defined in a coordinate neighborhood $(U,(z))$:  $Res_{D_{t}}(\eta\wedge 
\frac{dz_{i}}{z_{i}}+\eta')=\eta_{|D_{t}}$ if $D_{t}\cap U=\{z_{i}=0\}$ and $\eta$ and $\eta'$ do not contain $\frac{dz_{i}}{z_{i}}$.
 Then  $Res_{D_{t}}$ is $\O_{X}-$linear and  commutes with $d$.

Let $I=(t_{1},\ldots,t_{k})$ be an ordered $k-$tuple.
Define $$Res_{I}=Res_{t_{1}}\circ \ldots \circ Res_{t_{k}}:\dlog{.}{D}\to (a_{I})_{*}
\Omega_{D_{I}}^{.-m}(log D_{I}\cap(\sum_{j\not\in I}D_{j}))\,$$ Then $Res_{I}$ is $\O_
{X}-$linear and commutes with $d$.
Let $\Lambda^{.} T$ be the free antisymetric $\CC-$algebra on $T$. We still denote by $\Lambda^{.}T$ the constant sheaf on $X$ it defines. Let $I\in T^k$, set $\Lambda^{I}t:=t_{1}\wedge\ldots\wedge t_{k}$. Let $I$ be a $k-$tuples of distincts elements and $J=\sigma(I)$ be a permutation of $I$.  Then $$\wedge^{I} t\otimes Res_{I}=\wedge^{J}t\otimes Res_{J}$$ 
for 
$Res_{\sigma(I)}=\epsilon(\sigma)Res_{I}$ with $\epsilon(\sigma)$ the signature of $\sigma$.
If  $I=(t_{1},\ldots,\, t_{l})\in T^{l}$, set $\{I\}=\{t_{1},\ldots,\, t_{l}\}\subset T$.  
Then if $\card\{I\}=l$, there is a well-defined map $$Res_{\{I\}}:=\wedge^{I}t\otimes Res_{I}: \dlog{.}{D}\to (a_{I})_{*} \Omega_{D_{I}}^{.-l}(Log D_{I}\cap(\sum_{j\not\in I}D_{j}))\otimes \Lambda^{l}T\,.$$ 
One sets $Res_{m}=\oplus_{|\{I\}|=m}Res_{\{I\}}$.

 Choice of an ordering on $T$   
   gives a trivialisation 
  $Res_{m}\simeq \oplus_{t_{1}<\ldots<t_{m}}Res_{(t_{1},\ldots,t_{m})}\,.$
From \cite{Del} 3.1.5.2, the map
\begin{eqnarray}\label{isomorphisme residu}
Res_{m}:(Gr^{W}_{m}\dlog{.}{D},d)\to (a_{m})_{*}(\Omega^{.-m}_{D_{m}},d)\otimes \Lambda^{m}T
\end{eqnarray}
 is an isomorphism of complexes.

Let $p: \tilde X\to X$ be a covering. If $f:Y\to X$ is a continuous map, let $f^{*}p:\tilde Y\to 
Y$ be the induced covering. 

\begin{lemma}\label{lemme d'isomorphisme residu}The residu morphism $Res_{m}$ induces an isomorphism
 $$\p Res_{m}:(Gr^{W}_{m}\p\dlog{.}{D},d)\to (a_{m})_{*}(\ap{m}\Omega^{.-
m}_{ D_{m}},d)\otimes \Lambda^{m}T\,.$$
\end{lemma}
\begin{proof} 
From \cite{CamDem}, the functor $\p$ is exact from the category of coherent sheaves of $
\O-$modules to the category of sheaves on $X$. Moreover the natural map $\p (a_{m})_{*}\Omega^{.- m}_{ D_{m}}\to (a_{m})_{*}\ap{m}\Omega^{.-m}_{ D_{m}}$ is an isomorphism (\cite{CamDem} prop. 2.9). Therefore (\ref{isomorphisme residu}) implies that $$0\to  (\p {W}_{m-1}\dlog{.}{D},d)\to  (\p{W}_{m}\dlog{.}{D},d) \to (a_{m})_{*}(\ap{m}\Omega^{.-m}_{ D_{m}},d)\otimes \Lambda^{m}T\to 0$$ is exact. A computation in  a local chart prove that the last maps define a map of differential complexes.
\end{proof}

From lemma \ref{squareholomorphicpoincare}, the following complexes are well 
defined (see \ref{filtration tau} for the $\canfilt-$filtration). 
\begin{proposition}\label{filteredquasiisomorphism}
\item[1)] The maps of filtered complexes 
\begin{eqnarray}(\p\dlog{.}{D},W,d)\overset{\alpha}{\leftarrow} (\p\dlog{.}{D},
\canfilt,d)\overset{\beta}{\to} (j_{*}(j^{*}p)_{*(2)}\Omega^{.}_{X\setminus D},\canfilt,d) 
\end{eqnarray}
 are filtered quasi-isomorphisms. 
 \item[2)] 
 This defines an isomorphism between the Leray spectral sequence for $j_{*}(j^{*}p)_{*(2)}\CC$ and the spectral sequence for the hypercohomology of the filtered complex $(\p\dlog{.}{D},W,d)$.
 \item[3)]One deduces the $N(G)-$isomorphisms:  
 \begin{eqnarray*}
 \lefteqn{ \HH^{.}(X,(\p\dlog{.}{D},d)) \simeq \HH^{.}(X, j_{*}(\jp\Omega_{X\setminus D}^{.},d) )  } \\
&& \simeq \HH^{.}(X\setminus D,(\jp\Omega_{X\setminus D}^{.},d))\simeq H^{.}(X\setminus D,\p\CC)\, .
 \end{eqnarray*}
\end{proposition}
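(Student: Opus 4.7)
The strategy follows \Del's classical proof of the weight-filtration comparison for $(\Omega^{.}_X(\log D), W, \tau)$, adapted to the $L^2$ direct-image setting. The core tools available are: (i) the residue isomorphism $Res_m: (Gr^W_m \dlog{.}{D}, d) \overset{\sim}{\to} a_{m*}(\Omega^{.-m}_{D(m)}, d)$ of complexes of coherent sheaves, stated just above the proposition; (ii) the $L^2$ holomorphic Poincar\'e lemma (lemma \ref{squareholomorphicpoincare}) on every complex submanifold; (iii) exactness of $\p$ on coherent analytic sheaves together with the description $\p\F \simeq \p\O \otimes \F$ of lemma \ref{local isomorphism}, which in particular yields compatibility with closed immersions $a_I: D_I \to X$.

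For the map $\alpha$, I would compare the two filtrations on $\p\dlog{.}{D}$ at the level of graded pieces. Applying the exact functor $\p$ to the residue isomorphism, and using that $a_I$ is a closed immersion so that $\p a_{I*}\F \simeq a_{I*}(a_I^* p)_{*(2)}\F$ for coherent $\F$ on $D_I$, gives
\begin{eqnarray*}
Gr^W_m \p\dlog{.}{D} & \simeq & a_{m*}(a_m^* p)_{*(2)}\Omega^{.-m}_{D(m)}\,.
\end{eqnarray*}
The $L^2$ Poincar\'e lemma applied stratum-wise shows this is a resolution of $a_{m*}(a_m^* p)_{*(2)}\un{\CC}_{D(m)}[-m]$, so its cohomology is concentrated in degree $m$. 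Consequently $Gr^{\tau}_m \p\dlog{.}{D} \simeq \H^m(\p\dlog{.}{D})[-m]$ coincides with $Gr^W_m \p\dlog{.}{D}$, and $\alpha$ is a filtered \qi.

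For $\beta$, I would again work with $\tau$-graded pieces. The $L^2$ holomorphic Poincar\'e lemma on $U$ identifies $(j^* p)_{*(2)}\Omega^{.}_U$ with a resolution of $(j^* p)_{*(2)}\un{\CC}_U$, so
\begin{eqnarray*}
Gr^{\tau}_m j_*(j^* p)_{*(2)}\Omega^{.}_U & \simeq & R^m j_* (j^* p)_{*(2)}\un{\CC}_U[-m]\,.
\end{eqnarray*}
The comparison then reduces to a local statement in a standard chart $V \simeq \Delta^n$ with $D \cap V = \{z_1 \cdots z_k = 0\}$: one identifies this higher direct image with $a_{m*}(a_m^* p)_{*(2)}\un{\CC}_{D(m)}$, compatibly with residues, via the classical description of the punctured polydisc as homotopy equivalent to $(S^1)^k$ together with a computation of its $L^2$ cohomology on the cover in terms of the logarithmic forms $dz_{i_1}/z_{i_1} \wedge \cdots \wedge dz_{i_m}/z_{i_m}$.

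Part 2 is formal from part 1: a filtered \qi induces an isomorphism of the associated spectral sequences from $E_1$ on, identifying the $W$-weight spectral sequence of $\p\dlog{.}{D}$ with the $\tau$-Leray spectral sequence of $j_*(j^* p)_{*(2)}\un{\CC}_U$. Part 3 follows by applying $\HH(X,-)$ along the chain of \qi s, together with $\p\CC \to \p\Omega^{.}$ being a resolution (lemma \ref{squareholomorphicpoincare}) and the identification $(j^* p)_{*(2)}\un{\CC}_U = (\p\CC)_{|U}$. The main obstacle is precisely the local computation of $R^{.} j_* (j^* p)_{*(2)}\un{\CC}_U$ near a normal-crossing point: the $L^2$ integrability condition on the infinite cover interacts non-trivially with the local fundamental group $\ZZ^k$ of the punctured polydisc, and one must check that every $L^2$ cohomology class on $p^{-1}(V \setminus D)$ is represented, up to a coboundary, by a logarithmic form pulled back from $V$, in a way compatible with Deligne's residue. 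This is the one step where something genuinely beyond the coherent sheaf formalism of \CamDem{} is needed.
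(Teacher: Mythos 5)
Your overall route coincides with the paper's: for $\alpha$ you apply the exact functor $\p$ to the residue isomorphism and use the $L^2$ holomorphic Poincar\'e lemma on the $\tilde D_I$ to see that both graded pieces compute $a_{m*}(a_m^*p)_{*(2)}\un{\CC}[-m]$, and for $\beta$ you reduce to a local statement on a standard chart. But you stop exactly at the point that carries all of the analytic content: you assert that one ``must check'' that every $L^2$ class on $p^{-1}(V\setminus D)$ is represented by a logarithmic form pulled back from $V$, and you never check it. Without that step the claim that $\beta$ (equivalently $f_2\circ\beta$) is a quasi-isomorphism is unproved, and parts 2) and 3) collapse with it. Moreover your diagnosis of where the difficulty lies is slightly off: a standard chart $V$ is simply connected and evenly covered, so $p^{-1}(V\setminus D)\simeq (V\setminus D)\times G$ and there is \emph{no} interaction with the local fundamental group $\ZZ^k$ of the punctured polydisc --- no monodromy enters. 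The real issue is uniformity: the classical Poincar\'e-type argument on $V\setminus D$ must be carried out with operator bounds that are independent of the sheet, so that it can be applied simultaneously on all $G$-indexed copies without destroying $l^2$-summability.

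The paper fills this gap by an explicit construction: it defines the residue map $R_m$ on closed forms by integration over the torus cycles $\{|z_{i_1}|=\epsilon_1,\ldots,|z_{i_m}|=\epsilon_m\}$, recalls from \GriHar\ that $d\,\Gamma(V\setminus D,\Omega^{m-1})=\Ker R_m$ together with an explicit \emph{continuous} left inverse $S_m$ of $d$ on $\Ker R_m$ which maps logarithmic forms to logarithmic forms, and then observes that, under the trivialisation $p^{-1}(V)\simeq\cup_{g\in G}\,gV_1$, continuity of $S_m$ allows it to be applied componentwise to produce $R_{m(2)}$ and $S_{m(2)}$ on the $L^2$ direct image. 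This yields
\begin{eqnarray*}
l^2(G,\CC)\otimes\Bigl(\Lambda^m\{\tfrac{dz_1}{z_1},\ldots,\tfrac{dz_k}{z_k}\}\Bigr)\;\simeq\; H^{m}(\Gamma(V,\p\dlog{.}{D}),d)\;\simeq\; H^{m}(\Gamma(V\setminus D,\p\Omega^{.}),d)\,,
\end{eqnarray*}
which is precisely the statement you left open. If you want to complete your argument, you should either reproduce this bounded-left-inverse construction or supply an equivalent uniform estimate; the homotopy equivalence of $V\setminus D$ with $(S^1)^k$ by itself gives the answer over $\CC$ but not the boundedness needed to pass to $\p$.
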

\begin{proof}
\item[ ] The corresponding statement for a trivial covering map is proposition 3.1.8 of \cite{Del}. Let $(z):V\to D(0,1)^{n}$ be a chart in $X$ such that $D\cap V=\{z_{1}=\ldots =z_{k}=0\}$. Let $x$ be the center of the chart.

Define a residue map
$R_{m}:\Gamma(V\setminus D,\Omega^m)\cap \Ker (d)\to    \CC^{c(m,k)}$ through integration on  $m-$cycles $\{|z_{i_{1}}|=\epsilon_{1},\ldots, |z_{i_{m}}|=\epsilon_{m}\}$, $1\leq i_{1}<\ldots <i_{m}\leq k$. 

It is known (see Griffiths-Harris \cite{GriHar}) that  $d \Gamma(V\setminus D,\Omega^{m-1})=\Ker (R_{m})$. An explicit (continuous) left inverse $S_{m}$ is constructed in \cite{GriHar} p 451-452. The construction given there works for any closed holomorphic form, and is not restricted to polar singularities. It maps logarithmic forms to logarithmic forms. 

Fix a trivialisation of $G-$coverings $p^{-1}(V)\simeq  V\times p^{-1}(x)$. 
This defines\\ $R_{m(2)}: \Gamma(p^{-1}(V\setminus D),\p\Omega^m)\cap \Ker (d)\to l^{2}(p^{-1}(x))^{c(m,k)}$ and\\ $S_{m(2)}:\Ker (R_{m(2)})\to  \Gamma(p^{-1}(V\setminus D),\p\Omega^{m-1})$ a left inverse of $d$ which maps logarithmic forms to logarithmic forms.
Then $$l^{2}( p^{-1}(x),\CC)\otimes (\Lambda^m\{ \frac{dz_{1}}{z_{1}},\ldots \frac{dz_{k}}{z_{k}}\})\to H^{m}(\Gamma(V,\p\dlog{.}{D}),d)\to H^{m}(\Gamma(V\setminus D,\p\Omega^{.}),d))$$ are isomorphisms.

\item[1)]
Maps $\alpha$ and $\beta$ are filtered morphisms.
We proved that $\beta: (p_{2*}\dlog{.}{D},d)\to (j_{*}(j^{*}p)_{*(2)}\Omega^{.}_{U},d)$ is a quasi-isomorphism hence a filtered quasi-isomorphism for the canonical filtration.

Now $\tilde D_{m}\to D_{m}$ is a covering of a 
manifold. Hence, using  (\ref{lemme d'isomorphisme residu}), the sheaf $$\H^{i}(Gr^{W}_{m}(\p\dlog{.}
{D},d))\simeq \H^{i}(\ap{m}\Omega^{.-m}_{ D_{m}},d)$$ is vanishing  if $i\not=m$  and is 
isomorphic (as $\N(G)-$sheaf) to $\ap{m}\CC_{ D_{m}}$ if $i=m$ (Lemma \ref
{squareholomorphicpoincare}). 
This implies that  $\H^{m}((\p\dlog{.}
{D},d))\simeq a_{*}\ap{m}\CC_{ D_{m}}$ as $\N(G)-$sheaves. Hence $\alpha$ is a filtered quasi-isomorphism.  
\item[2)]

Note that $(\jp\Omega^{.}_{U},d)$ is a $\N(G)-$resolution of $\jp\CC$ 
 by $j_{*}-$acyclic sheaves: This follows from \cite{CamDem} th. 3.6 
  and $j$ is a Stein morphism. Then $R^{m}j_{*}[\jp\CC]$ is isomorphic to $\H^m(j_{*}(\jp\Omega^{.}_{U},\partial))$ as $\N(G)-$sheaves. This defines a pseudo-isomorphism \\$(Rj_{*}\jp\CC,\canfilt)=(j_{*}\C^{.}(\jp\CC,\canfilt))\dto (j_{*}\jp\Omega^{.}_{X\setminus D},\partial)$
 which composed with \\$E_{r}(\alpha)\circ E_{r}(\beta)^{-1}$ induces an isomorphism of spectral sequences.
 \item[3)] The quasi-isomorphism $(\p\dlog{.}{D},d)\to (j_{*}\jp\Omega_{X\setminus D}^{.},d)$ implies 
 \begin{eqnarray*}
 \HH^{.}(X,\p\dlog{.}{D})\simeq \HH^{.}(X, j_{*}\jp\Omega_{X\setminus D}^{.})\simeq \HH^{.}(X\setminus D,\jp\Omega_{X\setminus D}^{.})\\ \simeq H^{.}(X\setminus D,\jp\CC) \text{   as $N(G)-$modules. }
 \end{eqnarray*}

\end{proof}
\begin{definition}\label{normal crossing Dolbeault torsion theory}Let $\tau_{\dbar,\tilde D_{.}}$ be the Serre category generated by $\cup_{I\subset T}\tau_{\dbar,\tilde D_{I}}$ (see \ref{Dolbeault torsion theory}) 
\end{definition}
 
\begin{lemma}\label{definition de tilde beta}
\item[1)] 
 In the following diagram, the maps $f_{1}$ and $f_{2}$ are filered quasi-isomorphisms:
$$
\begin{diagram}
\node{(Rj_{*}\jp\RR,\canfilt)} \arrow{e,t}{i}\node{(Rj_{*}\jp\CC,\canfilt)} \arrow{e,t}{f_{1}}
\node{(Rj_{*}\jp\Omega^{.}_{X\setminus D},\canfilt,d)} \\
\node{(\p\dlog{.}{D},W,d)}
 \node{(\p\dlog{.}{D},\canfilt,d)}\arrow{w,t}{\alpha}\arrow{e,t}{\beta}\node{(j_{*}\jp\Omega^
{.}_{X\setminus D},\canfilt,d)}\arrow{n,t}{f_{2} }
\end{diagram}
$$ 
\item[2)]This defines the second comparison morphism $\tilde\beta :(Rj_{*}\jp\RR,\canfilt)\dto (\p\dlog
{.}{D},W,d)$  such that $\tilde\beta\otimes1_{\CC}:(Rj_{*}\jp\CC,\canfilt)\dto (\p\dlog{.}
{D},W,d)$ is a pseudo-isomorphism.
\item[3)] Let $\tau$ be a real torsion theory greater than $\gamma^{*}\tau_{\dbar,\tilde D_{.}}$ (\ref{ring extension}). Assume that $X$ is a compact K\"{a}hler manifold.

Then $\K:=[(Rj_{*}\jp\RR,\canfilt=W);(\p\dlog{.}{D},W,F),\beta]$ is a $\tau-$mixed Hodge 
complex of $\N(G,\RR)-$sheaves. Hence $R\Gamma(Gr^{W}_{m}\K)$ is a Hodge complex of weight $m$ in $\Mod(N(G,\CC))_{/\tau}$ (see \ref{Hodgecomplex}).
\end{lemma}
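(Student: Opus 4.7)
The plan is to first establish the two auxiliary quasi-isomorphisms $f_1$ and $f_2$, assemble them with $\alpha$, $\beta$ to obtain $\tilde\beta$, and then verify the mixed Hodge complex condition by reducing the graded pieces to the pure case treated in Theorem \ref{structurepure} on each stratum $\tilde D_I$.

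For (1), the morphism $f_1: (Rj_*\jp\CC,\tau) \to (Rj_*\jp\Omega^{.}_U,\tau)$ is built by applying Godement resolution componentwise to the quasi-isomorphism $\jp\CC \to (\jp\Omega^{.}_U,\partial)$ supplied by Lemma \ref{squareholomorphicpoincare} applied to $U$. As $\tau$ denotes the canonical filtration, compatibility is automatic. For $f_2$, we use the Camdem theorem (\cite{CamDem} Thm.~3.6) stating that $\jp\Omega^q_U$ is a sheaf of $\O_U$-modules and, since $j:U\to X$ is a Stein morphism, the sheaves $\jp\Omega^q_U$ are $j_*$-acyclic. Consequently $j_*\jp\Omega^{.}_U \to Rj_*\jp\Omega^{.}_U$ is a quasi-isomorphism, and again the canonical filtrations on both sides are compatible.

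For (2), one defines $\tilde\beta$ as the roof
\begin{equation*}
(Rj_*\jp\RR,\tau) \overset{i}{\to} (Rj_*\jp\CC,\tau) \overset{f_1}{\to} (Rj_*\jp\Omega^{.}_U,\tau) \overset{f_2}{\longleftarrow} (j_*\jp\Omega^{.}_U,\tau) \overset{\beta}{\longleftarrow} (\p\dlog{.}{D},\tau) \overset{\alpha}{\to} (\p\dlog{.}{D},W)
\end{equation*}
in the category of filtered $\N(G,\RR)$-sheaves; after tensoring with $\CC$, each intermediate arrow is a filtered quasi-isomorphism by (1), by Proposition \ref{filteredquasiisomorphism}, and because $i\otimes 1_\CC$ is the canonical identification. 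Hence $\tilde\beta\otimes 1_\CC$ is a pseudo-isomorphism.

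The content of (3) is the pure Hodge structure on each $Gr^W_p$. First, identify the real graded piece: $Gr^W_p(Rj_*\jp\RR,\tau)$ is (quasi-isomorphic to) $R^p j_* \jp\RR[-p]$, which a local Mayer--Vietoris computation in a standard chart $(V,(z_i))$ identifies with $a_{p*}\p\RR_{\tilde D_p}[-p]$ (the real Gysin system). On the complex side, the residue isomorphism recalled in the introduction gives $Gr^W_p(\p\dlog{.}{D},d) \simeq a_{p*}\p\Omega^{.-p}_{\tilde D_p}$, compatibly with the Hodge filtration shifted by $p$. Complexification of the real identification and resolution of $a_{p*}\p\CC_{\tilde D_p}$ by $a_{p*}\p\Omega^{.}_{\tilde D_p}$ make $\tilde\beta$ restrict on $Gr^W_p$ to the expected comparison morphism on $\tilde D_p$. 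Since $a_p$ is finite and $R\Gamma$ commutes with it, $R\Gamma(Gr^W_p\p\K)$ is identified with the $[-p]$-shifted, Tate-twisted de Rham complex of the Kähler covering $\tilde D_p\to D_p$. Under the hypothesis $\tau\geq\gamma^*\tau_{\dbar,\tilde D_{.}}$, Definition \ref{normal crossing Dolbeault torsion theory} ensures that the Hodge-to-de Rham spectral sequence of each $\tilde D_I$ degenerates in $\Mod(N(G))_{/\tau}$, so Theorem \ref{structurepure} applies to $\tilde D_p$ and provides a pure Hodge structure. Accounting for the shift $[-p]$ and the Tate twist built into the residue, the weight comes out to $p$ in the conventions of Definition \ref{Hodgecomplex}.

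The main obstacle is not the formal assembly of $\tilde\beta$ but the bookkeeping in step (3): checking that the residue isomorphism on the complex side and the local Leray identification on the real side match, after complexification, to define a single pseudo-morphism on $Gr^W_p$, and that the resulting Hodge filtration shifted by $p$ corresponds to the correct Tate twist so that Theorem \ref{structurepure} delivers a Hodge complex of weight $p$ rather than of weight $p+$(something unintended). Because all relevant torsion-theoretic inputs have already been isolated in Definition \ref{normal crossing Dolbeault torsion theory} and Theorem \ref{structurepure}, the analytic part is mechanical once the combinatorial Gysin/residue comparison is set up carefully.
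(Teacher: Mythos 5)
Your proposal is correct and follows essentially the same route as the paper: $f_1$ and $f_2$ via the $L^2$ holomorphic Poincar\'e lemma and the $j_*$-acyclicity of $\jp\Omega^{.}_U$ ($j$ Stein, \cite{CamDem} Thm.~3.6), $\tilde\beta$ as the composite roof, and the graded pieces handled by the residue isomorphism together with the Tate-twisted pure Hodge complex on $\tilde D_p$ from Theorem \ref{structurepure}. The only cosmetic difference is that where you invoke a local Mayer--Vietoris computation for the real Gysin identification $R^pj_*\jp\RR\simeq a_{p*}\p\RR_{\tilde D_p}$ with its $(2i\pi)^{-p}$ normalisation, the paper simply cites Deligne's Prop.~3.1.9.
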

\begin{proof}
\item[1)]  According to \ref{Godement-resolution}, 
   $$Rj_{*}(\jp\RR)\otimes\CC\overset{i\otimes1_{\CC}}{\to} Rj_{*}(\jp\CC)$$ is a $
\N(G)-$isomorphism. This defines a filtered isomorphism for the canonical 
filtration.
From Prop. \ref{filteredquasiisomorphism} and exactness of the Godement resolution, $f_{1}$ is a quasi-isomorphism.
Also $f_{2}$ is quasi-isomorphism. 
Hence $f_{1},\,f_{2}$ are  filtered quasi-isomorphisms for the canonical filtration. 
\item[2)] Then $\tilde\beta$ is defined through $$E_{1}(\tilde\beta)=E_{1}(\alpha)E_{1}(f_{2}
\circ \beta)^{-1}E_{1}(f_{1})E_{1}(i) $$ so that $E_{1}^{-m,l}(\tilde\beta,{\canfilt}_{-})$   maps $\H^{m}(j_{*}
\jp\RR)$ to $ \H^{-m+l}((Gr^{W}_{m}\dlog{.}{D},d))$ if $2m=l$, all terms vanish otherwise. 
 Then $\tilde\beta\otimes 1_{\CC}$ is a pseudo-isomorphism ($E_{1}(\tilde\beta\otimes 1_{\CC})$ is an isomorphism).
 
 \item[3,a)] The following lemma is proved as in \cite{Del} prop. 3.1.9.
 \begin{lemma}The residue morphism $$\p Res_{m}: Gr^{W}_{m}\p\dlog{.}{D}\to a_
{m*}\ap{m}\Omega_{ D_{m}}^{.-m}\otimes\Lambda^{m}T$$ maps $R^m{j_{*}\p\RR}[-m]\overset{qi}{\to}Gr^{\canfilt}_
{m}Rj_{*}\p\RR$ to 
$ (2i\pi)^{-m} a_{m*}\ap{m}\RR \otimes\Lambda^{m}T[-m]$.
\end{lemma}

\item[3,b)] According to example \ref{structurepure2}, $\K_{D_{m}}$ defined by  
\begin{eqnarray}\label{definition du complexe residuel}
&& ((2i\pi)^{-m} \ZZ\otimes\ap{m}\RR_{ D_{m}}[-m], (\ap{m}\Omega_{ D_{m} }[-m],  F_{D_{m}}[-m],d),(2i\pi)^{-m}.\alpha_{ D_{m}})
 \end{eqnarray} 
 is a $\N(G,\RR)-$Hodge complex of sheaves of weight $m$ modulo $\tau$.  From lemma \ref{lemme d'isomorphisme residu}, we deduce that   $(R^m{j_{*}\jp\RR}[-m],  Gr^{W}_{m}\p\dlog{.}{D}, Gr^{\canfilt}_{m}(\tilde\beta))$ is a cohomological Hodge complex of sheaves of weight $m$ modulo $\tau$: 
 $R\Gamma(Gr^{W}_{m}\K)\simeq R\Gamma(\K_{ D_{m}}) $ is a Hodge complex of weight $m$. 
\end{proof}
\subsection{Global setting}\label{global setting}
One gets our first theorem on mixed Hodge structure on $l^{2}-$cohomology (compare with \cite{Deltrois} \para 8 and \cite{PetSte} th. 3.18):

\begin{theorem}\label{theoreme de structure mixte}
Let $p:\tilde X\to X$ be a covering of a compact K\"{a}hler manifold with covering transformations group $G$.
Let $\Gamma$ be  the global section functor over $X$, from the category of $\N(G,\RR)-$sheaves of modules (resp. $\N(G)-$sheaves of modules) to the 
category of  $N(G,\RR)-$modules (resp.  $N(G)-$ modules).
Let $R\Gamma$ be its derived functor realized through the Godement resolution. 

\item[] Assume that a torsion theory $\tau$ on $N(G,\RR)$ is chosen so that for each $p\in\ZZ$, $$R
\Gamma(Gr^{W}_{p}\K):=[(R\Gamma( R^pj_{*}\p\RR)),\, (R\Gamma (Gr^{W}_{p}\p\dlog{.}
{D},F)),\, R\Gamma(Gr^{W}_{p}\tilde\beta)]$$ is a Hodge complex in $N(G,\RR)_{/\tau}$ , then
          $$R\Gamma(\K):=[ (\Gamma Rj_{*}\jp\RR,\canfilt), (R\Gamma(
\p\dlog{.}{D},W,F), R\Gamma(\tilde\beta)]$$ is a mixed Hodge complex in $N(G,\RR)_{/\tau}$. Therefore:
\item[i)] The spectral sequence for  $(R\Gamma(\p\dlog{.}{D}),W,d)$, which $E_{1}^{p,q}-$term is \\$\HH^{q-p}(Gr^{W}_{p}\p\dlog{.}{D},d)\simeq \HH^{-2p+q}(D_{p},(\ap{p}\Omega^{.}_{D_{p}},d))$, 
degenerates at $E_{2}$ in $N(G,\CC)_{/\tau\otimes\CC}$.
\item[ii)] The differential $d_{1}$ on $E_{1}^{-p,q}(R\Gamma(\p\dlog{.}{D}),W)\simeq H^
{-2p+q}(D_{p},\ap{p}\CC)$ is real, and is a morphism of Hodge structures in $\Mod(N(G))_{/\tau}$.
\item[iii)]Through the isomorphism $$Gr^{\canfilt}_{p}H^{-p+q}(X\setminus D, \p\RR)\otimes\CC\simeq Gr^{W}
_{p}\HH^{-p+q}(X,(\p\dlog{.}{D},d))\,,$$ the Hodge filtration induces a Hodge structure (modulo
$\tau$). It is the same than the Hodge structure induced by the isomorphism $E_{2}(R
\Gamma(\p\dlog{.}{D}),W)\simeq E_{\infty}(R\Gamma(\p\dlog{.}{D}),W)$.
\item[iv)] The spectral sequence in $N(G,\CC)_{/\tau\otimes\CC}$ 
$$E_{1}^{p,q}(R\Gamma(\p\dlog{.}{D}),F)\simeq H^{q}(X,\p
\dlog{p}{D})\abut \HH^{p+q}(X,\p\dlog{.}{D})$$ degenerates at $E_{1}$. 
\item[v)] Define the weight filtration $W_{.}$ on $H^{k}(X\setminus D,\p\CC)$, to be the shifted filtration $k$ step to the left of the filtration induced by the Leray spectral sequence: $$W_{.}H^{k}(X\setminus D,\p\RR)=\im(\HH^{k}(X, {\canfilt}_{.-k}Rj_{*}\p\RR)\to H^{k}(X\setminus D,\p\RR))\,.$$ It is equal to  $\im( \HH^{k}(X, (W_{.-k}\p\dlog{..}{D},d))\to \HH^{k}(X\setminus D,\p\CC))$.

Let $F$ be the Hodge filtration on $H^{k}(X\setminus D,\p\CC)$, then $(H^{k}(X\setminus D,\p\RR), W, F)$ is a mixed Hodge structure in $\Mod(N(G))_{/\tau}$.

\end{theorem}
\begin{proof} Recall that $\C^{.}(.)$ is an exact functor to the category of flabby sheaves and that $R\Gamma^{.}(.):=\Gamma(X,\C^{.}(.))$ is an exact functor from the category of complex of $\N(G)-$sheaves on $X$ to the category of $N(G)-$complexes.
Consider the following diagram:
$$
\begin{diagram}
\node{(R\Gamma Rj_{*}\jp\RR,\canfilt)} \arrow{e,t}{R\Gamma (i)} \arrow{s,l,..}{R\Gamma
(\tilde\beta)}\node{(R\Gamma Rj_{*}\jp\CC,\canfilt)} \arrow{e,tb}{R\Gamma(f_{1})}{g_{1,
\CC}} \node{(R\Gamma Rj_{*}\jp\Omega^{.}_{U},\canfilt,d)} \\
\node{(R\Gamma \p\dlog{.}{D},W,d)}\arrow{e,tb}{g_{3}}{R\Gamma(\beta)}\node{(R
\Gamma(\p\dlog{.}{D}),\canfilt,d)}\arrow{e,t}{R\Gamma(\alpha)}\arrow{ne,t}{g_{2}}  \node{(R\Gamma (j_{*}\jp\Omega^{.},\canfilt,d)}\arrow{n,l}{R\Gamma(f_{2})} 
\end{diagram}
$$

Set $g_{1}=R\Gamma(f_{1})\circ R\Gamma(i)$. Then  $g_{1,\CC}=g_{1}\otimes 1_{\CC}$.
Refering to the proof of theorem \ref{lemme des deux filtrations}, defines $\beta_{2}=g_{2}\circ g_{3}$ and $\beta_{1}=g_{1}$.
This defines  pseudo-morphisms 
\begin{eqnarray}
\beta=R\Gamma(\tilde\beta):(R\Gamma Rj_{*}\jp\RR,\canfilt)\dto (R\Gamma \p\dlog{.}{D},W,d)\\
\beta_{\CC}: (R\Gamma Rj_{*}\jp\CC,\canfilt)\tilde\dto (R\Gamma \p\dlog{.}{D},W,d)\,.
\end{eqnarray}
Using the exactness of $R\Gamma$ and the previous lemma, one sees that $\beta_{\CC}$ is a pseudo-isomorphism. 

Moreover $$E_{1}^{p,q}(R\Gamma\K) \simeq H^{p+q}(R
\Gamma(Gr^{W}_{p}\K))\simeq H^{p+q}(R
\Gamma(\K_{D_{p}})) $$
 has a Hodge structure of weight $q$ modulo $\tau$ (see (\ref{definition du complexe residuel})). One concludes from  theorem \ref{lemme des deux filtrations} and section $7.2$ of \cite{Deltrois}. 

 %
\end{proof} 

\begin{example}\item[1)]\label{torsion mixte}Let $\tau_{\dbar,\tilde D_{.}}$ be the Serre category generated by $$\{\frac{\Adh{\Im  \dbar}}{\Im  \dbar},\, \dbar:\oplus_{p,q,I}S^{k-q,(p,q)}(p^{-1}( D_{I}))\to \oplus_{p,q,I}S^{k-q-1,(p,q+1)}(p^{-1}( D_{I}))\,\}$$ where $k$ is an integer big enough and $D_{\emptyset}=\tilde X$. Then $\gamma^{*}\tau_{\dbar,\tilde D_{.}}$ (\ref{ring extension}) is the smallest torsion theory which fulfills the above assumptions. 
\item[2)] Assume that $p:\tilde X\to X$ is a Galois covering, then the torsion theory associated to the dimension function or the $\U(G)-$torsion modules fulfills the above assumptions.
\end{example}

\subsection{Interpretation in equivariant cohomology}

We give three isomorphisms for the cohomology groups of the local system $\p\CC$ over $X\setminus D$ (see \ref{Link to singular cohomology}). For simplicity, we assume that $p:\tilde X\to X$ is a Galois covering.
\subsubsection{} 
One knows that $H^{k}(X\setminus D,\p\CC)$ is isomorphic to 
 $H^{k}(Hom_{\CC[G]}(C_{.}(p^{-1}( X\setminus D)),\, l^{2}(G)))$, the singular equivariant cohomology of $p^{-1}( X\setminus D) $ with coefficient in $l^{2}(G)$ (\cite{Eil}).

\subsubsection{}
$D$ admits a basis of neighborhood $V$ such that $V$ retracts onto $D$ and $X\setminus D$ retracts onto $X\setminus  V$. 
One may assume that $\bar V$ and $X\setminus V$ are manifolds with boundary.
Using the invariance of cohomology of a locally constant sheaf through homotopy (\cite{MebNar} cor.I.3.5), we get that $H^{k}(X\setminus D,\p\CC)\overset{i^{*}}{\to} H^{k}(X\setminus V,\p\CC)$ is an isomorphism.
Using a triangulation $T$ 
of the manifold with boundary $X\setminus V$, we get an isomorphism $H^{k}(X\setminus V,\p\CC)\simeq H^{k}(Hom_{\CC[G]}(\tilde T,l^{2}(G)))$ as in cor. \ref{Dodziuk}.
\subsubsection{} To obtain a direct combinatorial description of the group $H^{k}(X\setminus D,\p\CC)$, we use the Leray cover defined by the open stars of a triangulation  of $X$:  Let $f:K\to X$ be a $C^{1}-$triangulation of $X$ (\cite{Whi}) such that $D$ is a subcomplex. Let $v\in K_{0}$ be a vertex in $K$. Let $U_{v}=\Int{St(v,K)}$  be the open star of $v$ in $K$ (\Mun \S.2).
The open covering $\M=\{U_{v},\, v\in K_{0}\setminus D\}$ 
of $X\setminus D$ is such that any finite intersection of its elements is acyclic for any locally constant sheaf.
Let $K(- D)$ be the subcomplex of simplexes $\sigma$ of $K$ such that $\sigma$ does not intersect $D$.
Then the nerve of $\M$ is isomorphic as a simplicial set to $K(- D)$.  Let $\widetilde{ K(-D)}$ and $\tilde K$ be the pullback simplicial complexes. Using Leray's theorem (\cite{Bre} III.4.13 or \cite{God} II.5.2.4), we get as in the proof of cor. \ref{Dodziuk},  
$$H^{k}(X\setminus D,\p\CC)\simeq_{N(G)} \check H^{k}(\M,\p\CC)\simeq_{N(G)} H^{k}(\Hom_{\CC[G]}(\widetilde{ K(-D)},l^{2}(G)))\,.$$
This description as the homology of a complex of  $G-$Hilbert modules allows one to define a reduced cohomology which is isomorphic to the corresponding harmonic space (\cite{Luc} 1.1.4, \cite{Shu}).
The two last complexes have finite Von Neumann dimensions. Hence their reduced cohomology are isomorphic to their cohomology in $\Mod(N(G))_{/\tau_{\U(G)}}$ or $\Mod(N(G))_{/\tau_{dim}}$. 
\begin{corollary}\label{MHS on simplicial} There exists a mixed Hodge structure on $H^{k}(Hom_{\CC[G]}(C_{.}(p^{-1}(X\setminus D)),\, l^{2}(G)))$ \\
in $\Mod(N(G))_{/\gamma^{*}\tau_{\dbar,\,\tilde D_{.}}}$. 
In particular in $\Mod(N(G))_{/\tau_{dim}}$, there exists a mixed Hodge structure on 
 $\bar H^{k}_{(2)}(\widetilde {K(-D)}) $.
\end{corollary}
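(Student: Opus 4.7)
The plan is to invoke the main structural Theorem \ref{theoreme de structure mixte} at the level of $\HH^{.}(X,\p\dlog{.}{D})$ and then transport the resulting MHS, first along the \v{C}ech identification already recalled immediately above the statement, and second along the Dodziuk-type comparison of combinatorial and analytical reduced $l^{2}$-cohomology given earlier in the paper.

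For the first assertion, I would apply Theorem \ref{theoreme de structure mixte} with $\tau=\gamma^{*}\tau_{\dbar,\tilde D_{.}}$ (Definition \ref{normal crossing Dolbeault torsion theory}). By construction this is the smallest real torsion theory killing every Kodaira obstruction $\frac{\Adh{Im\dbar}}{Im\dbar}$ on each $\tilde D_{I}$, so hypothesis (\ref{un}) of the introductory theorem holds and a MHS is produced on $\HH^{.}(X,\p\dlog{.}{D})$ in $\Mod(N(G))_{/\gamma^{*}\tau_{\dbar,\tilde D_{.}}}$. Composition with the $N(G,\CC)$-isomorphisms of Proposition \ref{filteredquasiisomorphism} and with the \v{C}ech identification recalled in the paragraph just above the statement,
\begin{eqnarray*}
\HH^{.}(X,\p\dlog{.}{D})\simeq H^{.}(X\setminus D,\jp\CC)\simeq H^{.}(Hom_{\CC[G]}(\ZZ[\widetilde{F\setminus D}],l^{2}(G)))\,,
\end{eqnarray*}
transports the bifiltration $(W,F)$ to the simplicial side, proving the first claim.

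For the second assertion, I would first check $\gamma^{*}\tau_{\dbar,\tilde D_{.}}\leq \tau_{dim}$: by Corollary \ref{example of torsion modules}(1) applied on each $\tilde D_{I}$, every generator of $\tau_{\dbar,\tilde D_{.}}$ has vanishing $G$-dimension, and this descends to $\gamma^{*}$ since complexification preserves dimension. Exactness of the induced reduction functor between the two quotient categories then pushes the MHS of the previous paragraph into $\Mod(N(G))_{/\tau_{dim}}$. Finally, the Dodziuk-type Corollary \Dod\ identifies, in $\Mod(N(G))_{/\tau_{dim}}$, the combinatorial $l^{2}$-cohomology $H^{.}(Hom_{\CC[G]}(\ZZ[\widetilde{F\setminus D}],l^{2}(G)))$ with the analytical reduced $L^{2}$-cohomology $\bar H^{.}_{(2)}(\widetilde{X\setminus D})$, yielding the second claim.

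The one point that requires care is this last Dodziuk-type step in the non-compact setting $\widetilde{X\setminus D}\to X\setminus D$: one needs a triangulation $\Sigma$ of $X$ making $D$ a subcomplex and such that the open stars of simplices of $F\setminus D$ form a Leray cover of $X\setminus D$ for $\jp\CC$, which is ensured by the simple-connectedness recalled just above the statement; then one needs to know that the natural integration map from the $L^{2}$ de Rham complex to the simplicial $l^{2}$-complex is a quasi-isomorphism modulo $\tau_{dim}$. This last property holds because, on both sides, the modules $\frac{\Adh{Im\,d}}{Im\,d}$ become trivial modulo $\tau_{dim}$ (by Atiyah Fredholmness, as used earlier for $\tilde X$), so the argument of the compact Dodziuk Corollary goes through simplex by simplex.
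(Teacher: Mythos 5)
Your argument for the first assertion is exactly the one the paper intends: the corollary is stated without a separate proof precisely because it is the composite of Theorem \ref{theoreme de structure mixte} applied with $\tau=\gamma^{*}\tau_{\dbar,\tilde D_{.}}$ (the Example following that theorem confirms this is the smallest admissible choice), the identifications of Proposition \ref{filteredquasiisomorphism}, and the \v{C}ech/simplicial identification recalled in the paragraph immediately preceding the statement. Your verification that $\gamma^{*}\tau_{\dbar,\tilde D_{.}}\leq\tau_{dim}$ via Corollary \ref{example of torsion modules}(1) and exactness of the further localisation is also the intended route to the second assertion.

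Where you diverge is the very last step. You interpret $\bar H^{.}_{(2)}(\widetilde{X\setminus D})$ as analytic reduced $L^{2}$ de Rham cohomology of the open covering manifold and insert a Dodziuk-type comparison to reach it. Under the reading most consistent with the surrounding text, this step is not needed: the complex $Hom_{\CC[G]}(\ZZ[\widetilde{F\setminus D}],l^{2}(G))$ is a bounded complex of finitely generated Hilbert $N(G)$-modules (the simplicial structure $\Sigma$ is finite), hence $G$-Fredholm, so its unreduced and reduced cohomologies already agree in $\Mod(N(G))_{/\tau_{dim}}$ and the second assertion follows from the first by pure localisation. If one does insist on the analytic reading, your proposed justification (``the argument goes through simplex by simplex'') is too quick: $L^{2}$ de Rham cohomology of $\widetilde{X\setminus D}$ depends on the choice of metric near $D$ (the restricted K\"ahler metric is incomplete), and the compact-base Dodziuk argument does not transfer without specifying the metric and re-establishing local Poincar\'e lemmas with $L^{2}$ control near the divisor; the paper itself does not supply this, and you should either drop the step or state it as an additional hypothesis on the metric. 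With that caveat, the proposal is correct and follows the paper's approach.
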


\subsection{Interpretation of $(E_{1},d_{1})$}\label{interpretation}
One aim of this section is formula (\ref{formulepourd1}) below which interprets $d_{1}$ through the Gysin morphisms. For this, we use a smooth logarithmic complex as in Griffiths-Schmid \cite{GriSch} p. 73.

\begin{definition} Let $\p\Slog{.}{D}$ be the subsheaf of $j_{*}[\jp\cS^{\infty}\A^{.}]$ such that a germ\\$\alpha$ belongs to $[\p\Slog{.}{D}]_{x}$ if
\begin{eqnarray}
h\alpha\in [\p\cS^{\infty}\A^{.}]_{x} &\text{ and } & h d\alpha\in [\p\cS^{\infty}\A^{.}]_{x}
\end{eqnarray}
where $h$ a defining equation for $D$ at $x$.
\end{definition}
From its definition, $(\p\Slog{.}{D},d)$ is a complex. Moreover 
\begin{lemma} $\p\Slog{.}{D}=\p\cS^{\infty}\A^{.}\otimes_{\O} \dlog{.}{D}$.
\end{lemma}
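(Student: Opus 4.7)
The equality is a local statement. I would work in a standard chart $(V,(z))$ around a point $x\in D$ with $D\cap V = \{z_1\cdots z_k=0\}$ and take $h = z_1\cdots z_k$ as defining equation. Over $V$, the sheaf $\dlog{.}{D}$ is $\O_V$-free with basis $\theta_L := \bigwedge_{l\in L}\eta_l$ (for $L\subset\{1,\ldots,n\}$), where $\eta_i = dz_i/z_i$ for $i\leq k$ and $\eta_i = dz_i$ for $i>k$; correspondingly $\p\S^{\infty}\A^{.}\otimes_{\O}\dlog{.}{D}$ is locally $\p\S^{\infty}\A^{0}$-free with basis $\{\theta_L\wedge d\bar z_K\}_{L,K\subset\{1,\ldots,n\}}$.

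For the inclusion $\supseteq$, I would take $\beta\in\p\S^{\infty}\A^{.}$ and $\omega\in\dlog{.}{D}$. By the very definition of $\dlog{.}{D}$, both $h\omega$ and $h\,d\omega$ are holomorphic, hence lie in $\p\S^{\infty}\A^{.}$; the Leibniz rule then gives
\[
h(\beta\wedge\omega) = \beta\wedge(h\omega),\qquad h\,d(\beta\wedge\omega) = d\beta\wedge(h\omega) + (-1)^{\deg\beta}\beta\wedge(h\,d\omega),
\]
both in $\p\S^{\infty}\A^{.}$, so $\beta\wedge\omega\in\p\Slog{.}{D}$.

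For the inclusion $\subseteq$, any $\alpha\in\p\Slog{.}{D}(V)$ is smooth on $U\cap V$ and decomposes uniquely as $\alpha = \sum_{L,K} f_{L,K}\,\theta_L\wedge d\bar z_K$ with $f_{L,K}\in\A^{0}(U\cap V)$; it suffices to prove that each $f_{L,K}$ lies in $\p\S^{\infty}\A^{0}(V)$. Using $h\theta_L = \bigl(\prod_{i\leq k,\,i\notin L} z_i\bigr)\,dz_L$, the first hypothesis translates into
\[
\sum_{L,K} f_{L,K}\Bigl(\prod_{i\leq k,\,i\notin L} z_i\Bigr)\,dz_L\wedge d\bar z_K \in \p\S^{\infty}\A^{.}(V),
\]
which is already enough when $L\supseteq\{1,\ldots,k\}$ and in general controls each $f_{L,K}$ up to at most a simple pole on the components $D_i$ with $i\leq k$ and $i\notin L$. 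Since $d\theta_L=0=d(d\bar z_K)$, one has $d\alpha = \sum df_{L,K}\wedge\theta_L\wedge d\bar z_K$; splitting $df_{L,K}$ into its $(1,0)$- and $(0,1)$-parts and imposing $hd\alpha\in\p\S^{\infty}\A^{.}$ yields additional smoothness constraints on the combinations $z_i\partial_i f_{L,K}$ and $\bar\partial_j f_{L,K}$. One concludes by descending induction on $|L\cap\{1,\ldots,k\}|$: at each step the first condition bounds the pole order of $f_{L,K}$, and the derivative information from the second condition, already established at higher levels of the induction, forces the residual singular terms to vanish.

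The main obstacle is that neither hypothesis in the definition of $\p\Slog{.}{D}$ suffices on its own: the control of $h\alpha$ only limits the holomorphic pole order of the $f_{L,K}$, while the $\bar\partial$-directions can only be tamed by exploiting the derivative control coming from $hd\alpha$. The technical heart of the proof is thus the inductive bookkeeping that interlocks the two conditions, stratum by stratum, through the logarithmic basis of $\dlog{.}{D}$, and the verification that at each step the $\p\S^{\infty}$-regularity (i.e.\ uniform control of all Sobolev seminorms along the $G$-action) is preserved when one removes a pole by absorbing it into an $\eta_i = dz_i/z_i$ factor.
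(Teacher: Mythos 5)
Your logarithmic frame $\{\theta_L\wedge d\bar z_K\}$ and the inclusion $\supseteq$ are fine (the paper does not even record that direction), but the inclusion $\subseteq$ --- the entire content of the lemma --- is not actually proved: the ``descending induction on $|L\cap\{1,\ldots,k\}|$'' that is supposed to interlock the two hypotheses is described, never executed, and that is exactly where the difficulty lives. Two concrete problems. First, the claim that $h\,d\alpha\in j_{*}[\jp\S^{\infty}\A^{.}]$ constrains the quantities $z_i\partial_i f_{L,K}$ and $\bar\partial_j f_{L,K}$ separately is unjustified: in $d\alpha=\sum df_{L,K}\wedge\theta_L\wedge d\bar z_K$ several pairs $(L,K)$ contribute to the same monomial after wedging, so only alternating combinations of derivatives are controlled (already for $\alpha=f\,dz+g\,d\bar z$ in one variable the second hypothesis only sees $z\partial_z g-\partial_{\bar z}(zf)$). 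Second, and more fundamentally, bounding the ``pole order'' of $f_{L,K}$ does not yield smoothness: a function $g$ with $z_1g$ smooth (e.g.\ $\bar z_1/z_1$) need not be smooth, so a genuine divisibility criterion is required and your sketch contains none. The paper supplies it: it rewrites the second hypothesis as smoothness of $(h\alpha)\wedge\frac{dh}{h}$, shows by a limiting argument that \emph{all} antiholomorphic derivatives $\frac{\partial^{|B|}}{\partial\bar z^{B}}\beta_I$ vanish on each component $\{z_i=0\}$ with $i\notin I$, and then invokes the Schwartz--Malgrange division theorem to conclude $\beta_I\in z^{K-I}\A^{.}(V)$. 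Nothing in your proposal plays this role.

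There is a second gap specific to the $\p$ (uniform over the covering) statement. Even granting smooth divisibility sheet by sheet, you must show that dividing by $z^{K-I}$ preserves the uniform control of all $\C^{m}$ (equivalently Sobolev) seminorms along the $G$-orbit of charts; you name this as the issue to be checked but give no mechanism. The paper obtains it from the open mapping theorem: $z^{K-I}\A^{.}(V)$ is a closed ideal of a Fr\'echet space, so multiplication by $z^{K-I}$ is open onto its image, which yields a single constant $C$ with $\|\alpha_I\|_{\C^{m_1}(K_1)}\leq C\,\|z^{K-I}\alpha_I\|_{\C^{m_2}(K_2)}$ valid simultaneously on every sheet. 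Without such a uniform estimate, smoothness of each $f_{L,K}$ on each sheet does not give $f_{L,K}\in\p\S^{\infty}\A^{0}(V)$, and the stated equality of sheaves does not follow.
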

\begin{proof} One first proves the corresponding result in a coordinate chart $(V,(z))$ for a trivial one-sheeted covering. This will give estimates that carries over to the local situation of a trivialised covering $p^{-1}(V)\simeq V\times p^{-1}(x)$.
\item[1)]
Let us first recall a proof in a coordinate chart  $(V,(z_{1},\ldots, z_{n}))$ : The hypothesis implies that $h\alpha$ and $(h\alpha)\wedge \frac{dh}{h}$extends as smooth forms. Assume that $h=z_{1}\ldots z_{k}$ in $(V,(z))$ and that $h\alpha=\sum_{I}\beta_{I}dz^{I}$ where $I$ is a subset of $K=\{1,\ldots,k\}$ and the forms $\beta_{I}$ are smooth on $V$ which belongs to the exterior algebra generated by $dz_{k+1},\ldots, dz_{n}$. Then  $\gamma=(h\alpha)\wedge \frac{dh}{h}=\sum_{i,I}\frac{\beta_{I}}{z_{i}}dz^{I}\wedge dz^{i}$
 is a smooth form on $V\setminus \{z_{1}\ldots z_{k}=0\}$ such that any of its partial derivative $\frac{\partial^{|A|+|B|}}{\partial z^{A}\partial \bar z^{B}}\gamma$
admits a limit on $V\cap D$. Let $p\in (z_{1}=0)\setminus (z_{2}.\ldots .z_{k}=0)$. Then  $$ 0=\lim_{V\setminus D\ni p'\to p} z_{1} \frac{\partial^{|B|}}{ \partial\bar z^{B}}\gamma=
\lim_{V\setminus D\ni p'\to p} \sum_{i,I}\frac{ z_{1}}{z_{i}} \frac{\partial^{|B|}}{\partial \bar z^{B}}\beta_{I}dz^{I}\wedge dz^{i}
= \sum_{I\not\ni 1}  \frac{\partial^{|B|}}{ \partial\bar z^{B}}\beta_{I}(p)dz^{I}\wedge dz^{1}\,.$$
 Hence if $1\not\in I$ then $\frac{\partial^{|B|}}{\partial \bar z^{B}}\beta_{I}(p)=0$.
 By continuity, this holds for any $p\in \{z_{1}=0\}$. This implies that $\beta_{I}\in z_{1}\A^{.}(V)$ (\Mal, Schwartz \cite{Sch} th.2).
In the same way we see that if $i\not\in I$ then $\beta_{I}\in z_{i}\A^{.}(V)$. Hence $\beta_{I}\in z^{K-I}\A^{.}(V)$ by \cite{Sch} th.3 and 
  $\alpha=\sum_{I}\frac{\beta_{I}}{z^{K-I}}\frac{dz^{I}}{z^{I}}:=\sum_{I}\alpha_{I}\frac{dz^{I}}{z^{I}}$ belongs to $\A^{.}\otimes_{\O} \dlog{.}{D}(V)$.
  
 \item[2)]
But $z^{K-I}\A^{.}(V)$ is a closed ideal, therefore the surjective maps
$ \alpha_{I}\to z^{K-I}\alpha_{I}$ are open :
for all compact subset $K_{1}\subset V$, all $m_{1}\in \NN$, all $I\subset \{1,\ldots, k\}$, there exists compact subsets $ K_{2}\subset K_{3}\subset V$, integers $m_{2}\leq m_{3}$ and positive numbers $C_{1},\,C_{2}$ such that 
$||\alpha_{I}||_{\C^{m_{1}}(K_{1})}\leq C_{1} ||z^{K-I}\alpha_{I}||_{\C^{m_{2}}(K_{2})}\leq C_{2}||h\alpha||_{H^{m_{3}}(K_{3})}$. The last norm is the Sobolev norm of order $m_{3}$.

This implies that $\p\cS^{\infty}\dlog{.}{D}=\p\cS^{\infty}\A^{.}\otimes_{\O} \dlog{.}{D}$.
\end{proof}  

\begin{lemma}The sheaf $\p\cS^{\infty}\A^{0}$ is a flat $\O_{X}-$module.
\end{lemma}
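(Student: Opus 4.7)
The plan is to verify flatness of $\p\S^\infty\A^0$ over $\O_X$ at the level of stalks. Since $\O_{X,x}$ is a regular local ring with parameters $z_1,\ldots,z_n$ coming from any holomorphic chart centered at $x$, it suffices by the Koszul criterion to show that $z_1,\ldots,z_n$ is a regular sequence on the $\O_{X,x}$-module $(\p\S^\infty\A^0)_x$.

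I will work in a small chart $V$ trivializing the cover, $p^{-1}(V)\cong V\times G$, so that a section $f\in\p\S^\infty\A^0(V)$ is encoded by a family $(f_g)_{g\in G}$ of smooth functions on $V$ satisfying $\sum_g\|f_g\|_{H^k(K)}^2<\infty$ for every compact $K\subset V$ and every $k\in\NN$. The $\O_X$-action is applied sheet-by-sheet, i.e.\ $z_i\cdot(f_g)=(z_i f_g)$. Injectivity of multiplication by $z_1$ is clear since each $f_g$ is smooth. For the induction step, given $f=(f_g)$ with $z_{k+1}f\in(z_1,\ldots,z_k)\p\S^\infty\A^0(V)$, one has $z_{k+1}f_g\in(z_1,\ldots,z_k)\A^0(V)$ on each sheet, and by Malgrange's flatness of $\A^0$ over $\O$ (cf.~\Mal, \Tou) there exist smooth $f_{g,j}$ with $f_g=\sum_{j\leq k}z_j f_{g,j}$.

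The main point, and the essential obstacle, is to check that the family $(f_{g,j})_g$ still lies in $\p\S^\infty\A^0(V)$, i.e.\ satisfies the uniform $l^2$-Sobolev summability over $G$. This rests on the explicit form of Malgrange's division: after reducing to the case of a function vanishing on $\{z_1=0\}$, one can represent the quotient through an integral such as $\tilde f(z)=\int_0^1\partial_{z_1}f(tz_1,z_2,\ldots,z_n)\,dt$, yielding a uniform estimate $\|f_{g,j}\|_{H^k(K)}\leq C_{K,K',k}\|f_g\|_{H^{k+1}(K')}$ on nested compact subsets $K\subset K'\subset V$, with constant $C_{K,K',k}$ independent of $g\in G$ because the division takes place on the base $V$ and commutes with the $G$-action on the sheets. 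Squaring and summing over $g$ then preserves the defining condition of $\p\S^\infty\A^0$, yielding the regular sequence property and hence flatness of the stalk; as this holds at every $x\in X$, the sheaf $\p\S^\infty\A^0$ is flat over $\O_X$.
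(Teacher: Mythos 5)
Your reduction to showing that $z_1,\ldots,z_n$ is a regular sequence on the stalk is where the argument breaks down. The implication ``maximal regular sequence $\Rightarrow$ flat'' (equivalently, $\Tor_{>0}^{\O_{X,x}}(\CC,M)=0\Rightarrow M$ flat) is valid for modules finitely generated over a Noetherian local $\O_{X,x}$-algebra, or more generally for modules ideally separated for the $\mathfrak m$-adic topology; the stalk $(\p\S^{\infty}\A^{0})_x$ is neither. Germs of flat functions lie in $\bigcap_{k}\mathfrak m^{k}(\p\S^{\infty}\A^{0})_x$, so the module is not $\mathfrak m$-adically separated and the local criterion of flatness does not apply. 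Concretely, over $R=\CC\{z_1,z_2\}$ the module $M=\CC\{z_1\}[z_1^{-1}]\oplus R$, with $z_2$ acting by $0$ on the first summand, admits $z_1,z_2$ as a regular sequence (indeed all its higher Koszul homology vanishes), yet $\Tor_1^{R}(R/(z_2),M)\supset\CC\{z_1\}[z_1^{-1}]\neq 0$, so $M$ is not flat. Flatness demands $\Tor_1^{\O_{X,x}}(\O_{X,x}/I,M)=0$ for \emph{every} finitely generated ideal $I$, and ideals of $\O_{X,x}$ are not generated by coordinate functions; this is precisely why the paper's proof starts from a finite presentation $\O^{k}(V)\to\O^{p}(V)\to\I(V)\to 0$ of an arbitrary finitely generated ideal and verifies the $\Tor_1$ criterion directly.

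There is a second, analytic, gap: your division step treats $z_1,\ldots,z_k$ as real coordinates. They are holomorphic, and $f\in z_1\C^{\infty}$ is \emph{not} equivalent to $f$ vanishing on $\{z_1=0\}$ (e.g.\ $\bar z_1\notin z_1\C^{\infty}$), so the Hadamard-type integral $\int_0^1\partial_{z_1}f(tz_1,\ldots)\,dt$ does not implement division by a holomorphic function in the smooth category, and no such elementary closed formula exists. The existence of a \emph{continuous linear} division operator is exactly the content of the closedness of $\I(V)\cdot\C^{\infty}(V)$ (Whitney's spectral theorem and the division theorems of Malgrange and Schwartz) combined with the open mapping theorem, and that continuity is what yields the uniformity in $g\in G$ you need to stay inside $\p\S^{\infty}\A^{0}$. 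The paper obtains this by splitting the presentation after tensoring with $\C^{\infty}(V)$ and pulling the splitting back to the covering; the correct skeleton is therefore: arbitrary finitely generated ideal, finite presentation, Malgrange's flatness downstairs to get exactness and closed image, a continuous splitting, and transport of that splitting to the uniform Sobolev setting --- not a stalkwise regular-sequence computation.
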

\begin{proof}The statement for a trivial one sheeted covering is the flatness theorem of Malgrange \cite{Mal}:
Let $\I_{x}\subset \O_{x}$ be a finitely generated ideal. Let $V$ be a neighborhood of $x$ such that $\I(V)$ admits a finite presentation $$\O^{k}(V)\overset{r}{\to}\O^{p}(V)\overset{g}{\to} \I(V)\to 0$$  (the map $r$ gives the module of relations and $g:h\to\sum_{i=1}^{p}h_{i}g_{i}$ is defined using generators of $\I(V)$).

The flatness of $\C^{\infty}(V)$ on $\O(V)$ implies that $(\C^{\infty}(V))^k\overset{r}{\to} (\C^{\infty}(V))^p\to \I(V).\C^{\infty}(V)$ is exact. Hence $\Im (r)$ is closed and the above exact sequence splits. If $V$ is small enough, the pullback of this splitting provides a splitting of $\p\cS^{\infty}\A^{0}(V)^k\to \p\cS^{\infty}\A^{0}(V)^p\to \I(V).\p\cS^{\infty}\A^{0}(V)$. By Tougeron \cite{Tou} I.4, we conclude that $Tor_{1}^{\O_{x}}(\O_{x}/I_{x},\p\cS^{\infty}\A^{0}_{x})=0$.    
\end{proof}
\begin{corollary}\label{smoothing of the logarithmic complex} 
\item[1)]The morphism $(\p\dlog{.}{D},d)\overset{ }{\to} (\p\Slog{.}{D},d)$ is a quasi-isomorphism.  Moreover the morphism $$(\p\dlog{.}{D},d, W, F)\to (\p\Slog{.}{D},d, W, F)$$ defines a bi-filtered $\Gamma-$acyclic resolution.
\item[2)] Let $I$ be a $p-$tuple of distinct elements in $T$. Then there is a  map $$Res_{I} : \p\Slog{.}{D}\to \p\Slog{.-p}{(D_{I}\cap\sum_{j\not\in I}D_{j})}\,.$$
\item[3)]
The maps in the following diagram are filtered quasi-isomorphisms:

$$
\begin{diagram}
\node{(Gr^{W}_{l}\p\dlog{.}{D},d, F)} \arrow{e,t}{Res_{l}}\arrow{s,l}{}\node{ a_{l*}( \ap{l}\Omega^{.}_{ D_{l}},d,F)[-l]} \arrow{s,l}{}\\
\node{(Gr^{W}_{l}\p\Slog{.}{D}, d, F)} \arrow{e,t}{Res_{l}} \node{a_{l*}( \ap{l}\cS^{\infty}\A^{.}_{D_{l}},d, F)[-l] }
\end{diagram}
$$
\end{corollary}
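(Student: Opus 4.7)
The overall strategy is to promote the identification $\p\Slog{.}{D} = \p\S^{\infty}\A^{.}\otimes_{\O} \dlog{.}{D}$ of the preceding lemma to statements about each of the two filtrations $W$ and $F$, and then to reduce every graded piece to the $\dbar$--Poincar\'{e} situation already handled in Lemma \ref{holomorphic to de rham} and Lemma \ref{squareholomorphicpoincare}. The proof splits along the three claims.

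For (1), I first check that the natural inclusion $\p\dlog{.}{D}\to \p\Slog{.}{D}$ respects both filtrations: $W$ because multiplying by $\p\S^{\infty}\A^{0}$ preserves pole order, and $F$ because $\p\S^{\infty}\A^{0}$ is of type $(0,0)$ and $\dlog{.}{D}$ is purely holomorphic. To see it is bifiltered quasi--isomorphism, I compute $Gr^{m}_{W}Gr^{p}_{F}$ on both sides. On the holomorphic side, D\'{e}ligne's residue (\Del 3.1.5.2) identifies $Gr^{m}_{W}\p\dlog{.}{D}$ with $a_{m*}\p\Omega^{.-m}_{\tilde D_{m}}$, and $Gr^{p}_{F}$ of this concentrates everything in a single degree; on the smooth side the same residue together with flatness of $\p\S^{\infty}\A^{0}$ over $\O$ identifies $Gr^{m}_{W}Gr^{p}_{F}\p\Slog{.}{D}$ with the Dolbeault complex $(a_{m*}(a\circ p)_{*(2)}\S^{\infty}\A^{p-m,\,.-p}_{\tilde D_{m}},\dbar)$, and the required quasi--isomorphism is exactly the smooth Dolbeault resolution of $\p\Omega^{p-m}_{\tilde D_{m}}$ on the submanifold $\tilde D_{m}\to D_{m}$, which is Lemma \ref{holomorphic to de rham} applied fibrewise over $D(m)$. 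Acyclicity of the target is immediate: each $\p\Slog{t}{D}$ is a module over the fine sheaf $\p\S^{\infty}\A^{0}$ (partitions of unity lift from $X$) hence is soft.

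For (2), I use the local description $\p\Slog{t}{D} = \p\S^{\infty}\A^{.}\otimes_{\O}\dlog{.}{D}$ to transport the residue maps from $\dlog{.}{D}$ to $\p\Slog{.}{D}$: in a standard chart $(V,(z))$ with $D\cap V=\{z_{1}\cdots z_{k}=0\}$, a section writes uniquely as $\sum_{K}\beta_{K}\wedge\frac{dz^{K}}{z^{K}}$ with $\beta_{K}\in \p\S^{\infty}\A^{.}$, and for a $p$--tuple $I$ one sets $Res_{I}(\alpha)=\beta_{I}|_{D_{I}}$ (up to sign). Well--definedness (independence of the chart and of the choice of defining function for $D_{i}\cap V$) is inherited from the corresponding well--definedness for $\dlog{.}{D}$ since multiplication by a local unit is a map of $\p\S^{\infty}\A^{0}$--modules. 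Compatibility with $d$ and $\O$--linearity also transfer from the holomorphic case. The target is $\p\Slog{t-p}{D_{I}\cap\sum_{j\notin I}D_{j}}$ because $\beta_{I}$ carries only logarithmic poles along the $D_{j}$ with $j\notin I$.

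For the diagram in (3), commutativity is tautological from the local formula for $Res_{l}$ and the inclusion of holomorphic into smooth logarithmic forms. The left vertical arrow is a filtered quasi--isomorphism as a special case of (1) (combining the $W$--degeneracy with the $F$--filtration). The right vertical arrow is Lemma \ref{holomorphic to de rham} applied to the pulled back covering $\tilde D_{l}\to D_{l}$ with the (shifted) trivial filtration. The horizontal arrows are the residue isomorphisms from (2), which, on the graded piece $Gr^{W}_{l}$, become the isomorphisms of complexes of sheaves used in Proposition \ref{filteredquasiisomorphism}. The main technical point will be to check that the smooth residue, composed with the inclusion of $\p\Omega^{.}$ into $(a\circ p)_{*(2)}\S^{\infty}\A^{.}$, realizes the usual Dolbeault resolution of $\p\Omega^{.-l}_{\tilde D_{l}}$ uniformly in $l$; once this commutation is established, all four corners compute the same $\N(G)$--cohomology and the filtered quasi--isomorphism assertion follows. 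I expect the delicate step to be the bifiltered part of (1), where one must verify that on each $Gr^{m}_{W}Gr^{p}_{F}$ the smooth logarithmic Dolbeault complex is still a resolution in the $L^{2}$--direct image category; the flatness of $\p\S^{\infty}\A^{0}$ over $\O$ and \CamDem~ Prop. 2.9 are what make this reduction to the compact $\tilde D_{m}$ case effective.
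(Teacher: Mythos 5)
Your argument is correct and follows exactly the route the paper intends: the paper states this corollary with no written proof, treating it as an immediate consequence of the two preceding lemmas ($\p\Slog{.}{D}=\p\S^{\infty}\A^{.}\otimes_{\O}\dlog{.}{D}$ and flatness of $\p\S^{\infty}\A^{0}$ over $\O_{X}$) combined with the residue isomorphism of Proposition \ref{filteredquasiisomorphism} and the Dolbeault resolution of Lemma \ref{holomorphic to de rham}, which is precisely the reduction you carry out. Your identification of the bigraded pieces, the softness argument for $\Gamma$-acyclicity, and the two-out-of-three argument for the square are all sound fillings-in of what the paper leaves implicit.
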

\subsubsection{Gysin morphisms.}\label{Gysin morphisms}
 Let $I\subset T$ be such that $\sharp I=p$.

 Fix a (hermitian) metric on $D_{I}$ and choose an orthogonal splitting $T(X)_{|D_{I}}\simeq T_{D_{I}}\oplus N_{D_{I}/X}$.  For any $\epsilon$ small enough, the exponential map induces a diffeomorphism
$N_{D_{I}/X}(\epsilon)\overset{e_{I}}{\to} U_{I}(\epsilon)$ from a tubular neighborhood of $D_{I}$ in $N_{D_{I}/X}$ into a neighborhood of $D_{I}$ in $X$. 
 Define a retraction $r_{I}: U_{I}(\epsilon)\to D_{I}$ by compositing $e^{-1}_{I}$ with the bundle projection $N_{D_{I}/X}\to D_{I}$. Let $n=\dim X$. One choose $\epsilon_{1},\ldots,\epsilon_{n}$ such that the domains of the maps $r_{I}:U_{I}(\epsilon_{p})\to D_{I}$, $\sharp I=p$, satisfy the conditions $\cap_{ J\subset I, \,\sharp J=q}U_{J}(\epsilon_{q})\subset U_{I}(\epsilon_{p})$. 
Hence the intersection of the tubular neighborhoods of $D_{1}$ and $D_{2}$ is contained in the tubular neighborhood of $D_{1}\cap D_{2}$ (for simplicity assume that $X$ is of bounded geometry).
Then $r_{I}$ is a smooth submersion and $r_{I}^{*}$ maps local Sobolev space of order $s$ to local Sobolev space of order $s$ (for tensor product of separated variables commutes with the Fourier transform). 

Let $s_{t}$ be a  section of $[D_{t}]$ vanishing on $D_{t}$, $t\in T$. Let $h_{t}$ be a hermitian metric on $[D_{t}]$, such that $|s_{t}(x)|=1$ if $x\not \in U_{t}(\epsilon_{1}')$ with $\epsilon_{1}'<\epsilon_{1}$. Let  $\eta_{t}=\frac{1}{2i\pi}\partial log|s_{t}|^{2}$. The Poincar\'{e}-Lelong formula reads $d[\eta_{t}]=[D_{t}]-\omega_{t}$ where $\omega_{t}$ is the first Chern form of $([D_{t}],\, h_{t})$. Assume that $X$ is compact. Then these currents are supported in a compact neighborhood of $D_{t}$. For $I\in T^p$, define $\eta_{I}=\eta_{t_{1}}\wedge\ldots\wedge \eta_{t_{p}}$. If $t\not \in \{I\}$, we have
\begin{eqnarray}\label{support} supp( \omega_{t}\wedge \eta_{I})\subset\subset U_{\{t\}\cup \{I\}}(\epsilon_{|I|+1})\,.
\end{eqnarray}
We lift this construction by the covering map. Note that  a connected component of $p^{-1}(U_{I}(\epsilon_{p}))$ contains only one connected component of $p^{-1}(D_{I})$.

Let us introduce the following notations: for $I\in T^p$ such that $\Card\{I\}=p$, denotes  
$r_{I}, \eta_{I}, \ldots$ the lifts $p^{*}(r_{\{I\}}), p^{*}\eta_{I},\ldots$ to $p^{-1}(U_{\{I\}})$.

Let $[\alpha]\in H^{q-2p}( D_{I}, \ap{I}\cS^{k-.}\A^{.})$. From \ref{smoothingofcohomology}, we may assume that $\alpha$ belongs to \\$\Gamma(p^{-1} (D_{I}),\A^{q-2p})\cap_{n\in\NN}\Dom[(1+\Delta_{p^{-1}( D_{I})})^n]$ so that  $r_{I}^{*}(\alpha)\eta_{I}\in \Gamma(\cap_{t\in I} U_{t}(\epsilon'_{1}),\p\Slog{q-p}{D})$. Denote by the same symbol the extension of $r_{I}^{*}(\alpha)\eta_{I}$ by zero to $\tilde X$. Then $r_{I}^{*}(\alpha)\eta_{I}$ is an element of $\Gamma(\tilde X, \Slog{q-p}{\tilde D})$ such that $Res_{I}(r_{I}^{*}(\alpha)\eta_{I})=\alpha$. From corollary \ref{smoothing of the logarithmic complex},  one represents a class in $\HH^{-p+q}(X,\p (Gr^{W}_{p}\dlog{.}{ D},d))$ by a form
$\frac{1}{p!}\sum_{I}r_{I}^{*}(\alpha_{I})\eta_{I}\in \Gamma(\tilde X,\Slog{q}{D})$,
with the convention that $I\to \alpha_{I}$ is antisymmetrical in $I$. Hence $\alpha_{I}=\epsilon(t)\alpha_{(t,\Rm{t}{I})}$ where $\epsilon(t)$ is the signature of the permutation $I\to (t,\Rm{t}{I})$.
Then, 
\begin{eqnarray*}
 d(r_{I}^{*}(\alpha_{I})\eta_{I})=0+(-1)^{q-2p}r_{I}^*(\alpha_{I})\sum_{t\in I}\epsilon(t)\omega_{t} \eta_{\Rm{t}{I}} & =  & (-1)^{q-2p}\sum_{t\in I}r_{I}^*(\alpha_{(t,\Rm{t}{I})})\omega_{t}\eta_{\Rm{t}{I}}\\
d \sum_{I}r_{I}^{*}(\alpha_{I})\eta_{I} &= &(-1)^{q-2p}\sum_{J}\sum_{t}r_{t,J}^*(\alpha_{(t,J)})\omega_{t}\eta_{J}\\
Res_{\{J\}}d\sum_{I}r_{I}^{*}(\alpha_{I})\eta_{I}& = &(-1)^{q-2p}a_{J*}a_{J}^{*}(\sum_{t}\omega_{t}r_{t,J}^{*}(\alpha_{t,J}))\otimes \Lambda^{J}t
\end{eqnarray*}
Therefore
\begin{eqnarray} \label{formulepourd1} d_{1}[\sum_{I\in T^p}\alpha_{I}\otimes\Lambda^{I}t]&=&[\sum_{J\in T^{p-1}}(-1)^{q-2p}a_{J}^{*}(\sum_{t\in T}\omega_{t}r_{t,J}^{*}(\alpha_{t,J}))\otimes \Lambda^{J}t]\,.
\end{eqnarray}
\subsection{$l^{2}-$characteristics}
In this section, we state Atiyah's equalities between $l^{2}-$Euler characteristics and Euler characteristics of $X\setminus D$  under the hypothesis that the covering $p:\tilde X\to X$ is Galois.
Let $\tau_{dim}$ be the torsion theory such that $\T_{dim}=\{M\in\Mod(N(G))\,:\dim_{N(G)}M=0\}$ (see \ref{definition of dim tor}). 
Theorem \ref{structurepure} shows that $\tau_{dim}$ satisfies hypothesis of theorem \ref{theoreme de structure mixte}. Hence the weight spectral sequence degenerates at $E_{2}$ and the Hodge spectral sequence degenerates at $E_{1}$ in  $\Mod(N(G))_{/\tau_{\dim}}$. 
\begin{proposition} Let $\tilde X\to X$ be a $G-$cover of a compact K\"{a}hler manifold of dimension $n$ and let $D\to X$ be a normal crossing divisor. Then $ H^{n}(X\setminus D,\jp\CC)$ has finite Von Neumann dimension. Moreover if $F$ is the Hodge filtration given by the theorem  \ref{theoreme de structure mixte} then 

\begin{eqnarray}
Gr_{F}^{p} H^{i}(X\setminus D,\jp\CC) =  \bar H^{i-p}_{\dbar(2)}(\tilde X,\dlog{p}{\tilde D}) \text{ in } \Mod(N(G))_{/\tau_{\dim}} \\
\sum_{i=0}^{n}(-1)^{i}dim_{N(G)}Gr_{F^p}H^{i}(X\setminus D,\jp\CC)  = \sum_{i=0}^{n}(-1)^{i}dim_{\CC} Gr_{F^p}H^{i}(X\setminus D,\CC)\\
\chi_{(2)}(p^{-1}(X\setminus D))=\sum_{i}(-1)^{i}\chi_{(2)}(p^{-1}(D_{i}))= \sum_{i}(-1)^{i}\chi(p^{-1}(D_{i}))=\chi(X\setminus D)\,.
\end{eqnarray}
\end{proposition}
\begin{proof} Let $ \bar H^{k}_{\dbar (2)}(\tilde X,\, p^{*}(\dlog{p}{D}))$ be the Reduced Dolbeault cohomology group. Then inn $\Mod(N(G))_{/\tau_{dim}}$,  we have $H^{k}(X,\p\dlog{p}{D})\simeq \bar H^{k}_{\dbar (2)}(\tilde X,\, p^{*}(\dlog{p}{D}))$ (see \cite{CamDem} 3.4 and 3.14) and 
$$Gr_{F}^{p} H^{i}(X\setminus D,\jp\CC)\simeq  H^{i-p}(X,\p \dlog{p}{ D})\simeq  H^{i-p}_{\dbar(2)}(\tilde X,\, p^{*}\dlog{p}{D})$$   is finite $N(G)-$ dimensional. Using Atiyah's theorem (see \cite{Ati}, \cite{CamDem}), we get $\chi_{(2)}(\tilde X,p^{*}\dlog{p}{D})=\chi(X,\dlog{p}{D})$.
But Deligne's theorem (\cite{Del}) gives $Gr_{F}^{p} H^{i}(X\setminus D,\CC)\simeq  H^{i-p}( X,\dlog{p}{D})$. Hence  
\begin{eqnarray*}
\sum_{i=0}^{n}(-1)^{i}dim_{N(G)}Gr_{F}^{p}H^{i}(X\setminus D,\jp\CC)&=&\sum_{i=0}^{n}(-1)^{i}dim_{\CC} Gr_{F}^{p}H^{i}(X\setminus D,\CC)\\
\sum_{i=0}^{n}(-1)^{i}dim_{N(G)} H^{i}(X\setminus D,\jp\CC)&=&\sum_{i=0}^{n}(-1)^{i}dim_{\CC} H^{i}(X\setminus D,\CC)
\end{eqnarray*}
Using the weight spectral sequence, we get $\sum_{i,j}(-1)^{i+j}E_{1}^{i,j}=\sum_{i,j}(-1)^{i+j}E_{2}^{i+j}$ in the Grothendieck group of the category  $\Mod(N(G))$. From lemma \ref{lemme d'isomorphisme residu} and theorem \ref{structurepure}, one gets  $$E_{1}^{i,j}\simeq \HH^{j-2i}(D_{i},(\ap{i}\Omega^{.}_{D_{i}},d))\simeq H^{j-2i}(D_{i},\ap{i}\CC)\,.$$
This gives the equalities in the last line using additivity of dimension functions. 
\end{proof} 
Note in particular that $F^{n}H^{k}(X\setminus D,\,\jp\CC)=Gr_{F}^{n}H^{k}\simeq H^{k-n}(X,\p(K\otimes [D]))= \bar H^{k-n}_{\dbar(2)}(\tilde X, K_{\tilde X}\otimes [p^{-1}( D)])$ in $\Mod(N(G))_{/\tau_{dim}}$.

%
%
%
\subsection{Examples of $(\tau',\tau)-$cohomological mixed Hodge complex (CMHC) }
\subsubsection{} We have seen that $\K:=[(Rj_{*}\jp\RR, (Rj_{*}\jp\RR,\tau), (\p\dlog{.}{D},W,F), \tilde\beta]$ is a \\ $(0,\gamma^{*}\tau_{\dbar,\tilde D_{.}})-$CMHC (see lemma \ref{definition de tilde beta} for a definition of $\tilde\beta$). 
\subsubsection{} The following complex is functorial under morphism and its behavior under change of group may be studied through homological algebra. 

Recall that if $p:\tilde X\to X$ is a $G-$covering map  between locally compact spaces, then the functor $p_{!}p^*$ from the category of sheaves on $X$ to the category of $\ZZ[G]-$sheaves on $X$ is exact (Iversen \cite{Ive} p. 99 and p. 315) . Moreover, we have an isomorphism $ p_{!}p^*(F)\simeq (p_{!}\ZZ)\otimes F$ as $\ZZ[G]-$sheaves.

Then tensoring over $\CC[G]$ by $N(G)$ gives a functor from the category of $\RR-$mixed Hodge complexes of sheaves on $X$ to the category of $N(G)-$mixed Hodge complexes of sheaves modulo the dimension function: 
the functor $N:[\text{Sheaves over X}]\to[\N(G)-\text{Sheaves over X}]$ given by $\F\to \N(G)\otimes_{\CC[G]}p_{!}p^*\F$ is exact for a local model is an induced trivial module tensorised with $\CC[G]$.
 There are  natural maps
 $ \N(G)\otimes_{\CC[G]}p_{!}p^*\F\to l^{2}(G)\otimes_{\CC[G]}p_{!}p^*\F$
 defined by $  n\otimes f\mapsto n(\delta_{e})\otimes f$.
 In the case where $\F$ is holomorphic coherent or locally constant sheaf, this map takes values into $\p\F$. But is far from being surjective.   
However, if we reduce to the category of sheaves in $\Mod(N(G))_{/\tau_{\dim}}$, then we get from the usual CMHC $\K$ on $X$ a CMHC $mod(\tau_{\dim},\tau_{\dim})$ $\N(G)\otimes p_{!}p^{*}\K$.  
\section{Examples.}
\subsection{}\label{premiers exemples}
We give below the first terms of the unreduced $(E_{1},d_{1})$ complexes of the weight spectral sequence  (after a choice of ordering of the divisors). 

The opposite of the weight index is the column index. We recall that the shift in the weight filtration on the $l^{2}-$cohomology groups of $p^{-1}(X\setminus D)\to X\setminus D$ is given by
$Gr^{W}_{j}H^{i}(X\setminus D,\p\CC)\simeq Gr^{W}_{j-i}\HH^{i}(X, (\p\dlog{}{D},d))$.
For example, in suitable quotient category, homology at column $-i$ of the top line will compute $Gr^{W}_{6}H^{i}(X\setminus D,\p\CC)$.
 $$
\xymatrix{
    H^{0}_{(2)}(p^{-1} (D_{3}))  \ar@{.}[dr]  \ar[r]^{d_{1}} &   H^{2}_{(2)}(p^{-1} (D_{2}))   \ar@{.}[dr]  \ar[r]^{d_{1}} &  H^{4}_{(2)}(p^{-1}( D_{1}) ) \ar@{.}[dr]  \ar[r]^{d_{1}} &
 H^{6}_{(2)}(\tilde X)    & 0    \\
   0 &      H^{1}_{(2)}(p^{-1} (D_{2}))   \ar@{.}[dr]  \ar[r]^{d_{1}} & H^{3}_{(2)}(p^{-1}( D_{1}) )   \ar@{.}[dr]  \ar[r]^{d_{1}} &   H^{5}_{(2)}(\tilde X)  & 0 \\
  0&    H^{0}_{(2)}(p^{-1}( D_{2})) \ar@{.}[dr]  \ar[r]^{d_{1}} &   H^{2}_{(2)}(p^{-1} (D_{1}) ) \ar@{.}[dr]  \ar[r]^{d_{1}} &      H^{4}_{(2)}(\tilde X)  &  0 \\
  & 0 &   H^{1}_{(2)}(p^{-1} (D_{1}) ) \ar@{.}[dr]  \ar[r]^{d_{1}} &   H^{3}_{(2)}(\tilde X)    &   0 \\
   & 0&     H^{0}_{(2)}(p^{-1} (D_{1}) ) \ar@{.}[dr]  \ar[r]^{d_{1}} &   H^{2}_{(2)}(\tilde X) & 0 \\
   &  &    0    &  H^{1}_{(2)}(\tilde X) & 0  \\
     & & 0 &  H^{0}_{(2)}(\tilde X) &  0  \\
    Gr_{W_{-}}^{-3} &  Gr_{W_{-}}^{-2}& Gr_{W_{-}}^{-1} &  Gr_{W_{-}}^{0} &  
}
$$
We refer to the formula (\ref{formulepourd1}) for an interpretation of $d_{1}$ in term of the Gysin morphisms.
\subsubsection{}
The homology of  the $i-$th line at the column $Gr^{0}_{W_{-}}$ is $E_{2}^{0,i}(W_{-})$. But (\ref{filteredquasiisomorphism}) implies $$Gr_{i}^{W}H^{i}(X\setminus D,\p\CC)=\Im (H^{i}(X,\p\CC)\to H^{i}(X\setminus D,\p\CC))\,.$$ 
Hence under the hypothesis of theorem \ref{theoreme de structure mixte}, 
 the kernel of the restriction mapping is isomorphic in $\Mod(N(G))_{/\tau}$ to the image of the Gysin homomorphism.
\subsubsection{}
The $E_{1}-$page of the Hodge filtration has differential $d_{1}$ induced by the operator $\partial$. We recall that the $l^{2}-$cohomology groups $H^{.}(X,\p\E)$ of a holomorphic vector bundle are isomorphic to the unreduced $l^{2}-$Dolbeault cohomology groups $H^{.}_{\dbar(2)}(\tilde X,\, p^{*}(E))$  (see \cite{CamDem}).
In the case of a surface this gives:
$$
\xymatrix{
  { H^{2}(X,\p\O) }  \ar[r]^-{d_{1}} & {H^2(X,\p\dlog{1}{D})}  \ar[r]^-{d_{1}} &     { H^{2}(X,\p K\otimes [D]) } \\
    {H^{1}(X,\p\O)}  \ar[r]^-{d_{1}} & {H^1(X,\p\dlog{1}{D})}  \ar[r]^-{d_{1}} &   { H^{1}(X,\p K\otimes[D]) }  \\
    {H^{0}(X,\p\O) }  \ar[r]^-{d_{1}} & {H^0(X,\p\dlog{1}{D})}  \ar[r]^-{d_{1}}  &{ H^{0}(X,\p K\otimes [ D] ) }\\ 
     {Gr_{F}^{0}}   & {Gr_{F}^{1}} & {Gr_{F}^{2}}      
}$$

\subsubsection{Reduction with respect to the dimension torsion theory}
Let $p:\tilde X\to \tilde X/G=X$ be a $G-$covering of the complex compact K\"{a}hler manifold. Let us choose a K\"{a}hler metric on each divisor $D_{I}$ of $X$, and a hermitian metric on the vector bundle $\dlog{.}{D}\to X$. And consider the pullback structures on $\tilde X$ and $p^{-1}(D_{I})$.

 Let $q$ be one of the 
 projections from the $l^{2}-$cohomology group onto the respective harmonic spaces (which are induced by the orthogonal projections from the square integrable cocycle to the harmonic spaces).
Then the reduction of $q$ in $\Mod(N(G))_{/\tau_{dim}}$ or $ \Mod(N(G))_{/\tau_{\U(G)}}$ is isomorphic to the identity (see \ref{example of torsion modules}) and the torsion theory $\tau_{dim}$ satisfies hypothesis of theorem \ref{theoreme de structure mixte}. Therefore:
\begin{corollary}\label{Reduction with respect to the dimension torsion theory} In  $\Mod(N(G))_{/\tau_{dim}}$ or $ \Mod(N(G))_{/\tau_{\U(G)}}$, we have the following isomorphisms.
\item[i)] For the weight spectral sequence:
\begin{gather*}
 E^{-l,k+l}_{1}(W_{-})=\HH^{k}(X, Gr^{W}_{l}\p\dlog{.}{D})\simeq \H^{k-l}_{d(2)}(p^{-1}(D_{l}))   \\
[d_{1}]=[q\circ d_{1}\circ q]   \\
E^{-l,k+l}_{2}(W_{-})\simeq \text{Homology of }\left(  \H^{k-l-2}_{d(2)}(p^{-1}(D_{l+1}))\overset{q\circ d_{1}}{\to}\H^{k-l}_{d(2)}(p^{-1}(D_{l}))\overset{q\circ d_{1}}{\to}\H^{k+l+2}_{d(2)}(p^{-1}(D_{l-1}))  \right)
\end{gather*}
\item[ii)] For the Hodge spectral sequence: 
\begin{eqnarray*}
 E^{k,l}_{1}(F)\simeq \H^{l}_{\dbar(2)}(\tilde X,p^{*}\dlog{k}{D}) &\text{ and }& [d_{1}]=0\, .
 \end{eqnarray*}
\end{corollary}

\subsection{The dual $CW-$complex associated to $p:(\tilde X,p^{-1}(D))\to (X,D)$ and a combinatorial interpretation of the highest weight cohomology}\label{Simplicial structure}
%
The top line of $(E_{1}(W),d_{1})$ is interpreted through the dual complex associated to the divisors. A $k-$cell of this complex is a connected component of $p^{-1}(D_{k+1})$.
The $l^{2}-$homology of this dual complex is interesting: if the action of the infinite group $G$ is cocompact, the reduction in $\tau_{dim}$ or $\tau_{\U(G)}$ only keeps track of the complex associated to the compact connected components, for the other components have infinite isotropy subgoups, see lemma \ref{lemme un} .

On the other hand, 
the $l^{2}-$harmonic forms of maximal degree of a connected covering manifold is vanishing when the covering is infinite. This means that, when one neglects torsion, the only non trivial top dimensional $l^{2}-$cohomology spaces which enters in the top line of $(E_{1}(W),d_{1})$ are associated to compact connected componant. In this case they are multiple of the dual of the fundamental class.

This correspondance between cells with finite isotropy of the dual complex and dual of the fundamental class of compact connected componants is formalised in lemma \ref{lemme deux}.
In the case of a Galois cover, one then describes, modulo $\tau_{\U(G)}$ or $\tau_{dim}$, the groups $Gr^{W}_{2n}H^{.}(X\setminus D)$ as the relative $l^{2}-$homology of the dual complex relative to the complex of cells with infinite isotropy (theorem \ref{relation avec le complexe dual}).

Consideration of non Galois coverings leads us to distinguish between non compact connected componant and componant with infinite isotropy.
When working in general non Galois coverings, we may use the edge homomorphisms (\ref{edge homomorphisms}) in unreduced cohomology in order to derive informations from the simplicial $l^{2}-$homology of the dual simplicial complex.

\subsubsection{The $\Delta-$complex $\tilde K$.}\label{the delta complex}
  Let $T_{k}\ni \beta\to D_{\beta}$ be a parametrisation of the set of connected components $D_{\beta}$ of $\sqcup_{|I|=k}D_{I}$. We assume that $T_{1}=T$. Let $\P(T)$ be the set of subset of $T$. Let $n: T_{k}\to \P(T)$ be the map such that $D_{\beta}$ is a connected component of $\cap_{t\in n(\beta)}D_{t}$ and $\card n(\beta)=k$.
Let $\tilde T_{k}\ni \beta\to \tilde D_{\beta}$ be a parametrisation of the set of connected components of $p^{-1}(\sqcup_{|I|=k}D_{I})$. 
Then $G$ acts on $\tilde T_{k}$ and $p$ induces a projection  $p: \tilde T_{k} \to T_{k}$. 
We recall that $D_{\emptyset}=D_{0}=X$  so that $T_{0}$ is a set with a single element, and $\tilde T_{0}\ni \gamma\to \tilde X_{\gamma}$ parametrizes the set of connected components of $\tilde X$. In order to have uniform notations, the component $\tilde X_{\gamma}$ will also be denoted by $\tilde D_{\gamma}$.

We now define the $\Delta-$set $\tilde K$ (\RouSan):
Let $(T, \leq)$ be an order on $T$. Let $\F_{\inc}([k],\,T)$ be the set of injective increasing maps from $([k],\leq )
=(\{0,\ldots,\,k\},\leq)$ to $(T,\leq)$.  A  $k-$cell is an element $(\beta,o)\in  \tilde T_{k+1}\times \F_{\inc}([k],\,T)$ such that $\tilde D_{\beta}$ is a connected component of  $p^{-1}(D_{o(0)}\cap \ldots\cap D_{o(k)})$. 
In the following the map $o\in \F_{\inc}([k],\,T)$ will be identified with the $(k+1)-$uple $(o(0),\ldots,\,o(k))\in T^{k+1}$.

Let $f:[s]\to [k]$ be an injective increasing map. It induces the $f-$th face map $\tilde K(f):\tilde K_{k}\to \tilde K_{s}$:   
Let $(\beta,o)\in \tilde K_{k}$. Then  $\tilde K(f)(\beta,o)=(\gamma,o\circ f)$ where $\gamma\in \tilde T_{s}$ is the connected component  of  $p^{-1}(\cap_{t\in f\circ o([s])}D_{t})$ which contains $\tilde D_{\beta}$. 

Let $\Delta_{k}$ be the standard simplex in $\RR^{k}$. Then $f: [s]\to [k]$ induces the usual linear injection $f: \Delta_{s}\to \Delta_{k}$.
A geometric realization $|\tilde K|$  of $\tilde K$ is formed from  the disjoint union $\sqcup_{l\in \NN}\tilde K_{l}\times \Delta_{l} $ by identifying pairs $ ((\beta,o),f(x))$ and $(\tilde K(f)(\beta,o),x)$ (\cite{RouSan} \para.2).

Let $C_{k}(\tilde K)$ be the free $\ZZ-$module with basis the $k-$simplices $(\beta,o)$ in $\tilde K_{k}$.  Let $f_{m}:[k-1]\to [k]$ be the usual $m-$th face map and let $\partial_{m}(\beta,o):=\tilde K(f_{m})(\beta,o)$. The boundary map $\partial: C_{k}(\tilde K)\to C_{k-1}(\tilde K)$ is defined by $\partial (\beta,o)=\sum_{m=0}^{k}(-1)^{m}\partial_{m}(\beta,o)=\sum_{i\in o}\epsilon(i)\Rm{i}{(\beta,o)}$ where if $i=o(m)$, we set $\Rm{i}{(\beta,o)}=(\Rm{i}{\beta},\Rm{i}{o}):=\partial_{m}(\beta,o)$ and $\epsilon(i)=(-1)^{m}$ is the signature of the permutation $o\to (i,\Rm{i}{o})$.

 $G$ acts naturally on $\tilde K$, $|\tilde K|$ and $|\tilde K|$ is a $G-$CW complex (see \cite{Luc} 1.2.1).
Moreover any other choice of ordering $(T,\leq')$ on $T$ is given by an increasing permutation $\sigma: (T,\leq)\to (T,\leq')$ which defines $G-$isomorphic $\Delta-$complexes,  $G-$homeomorphic geometric realizations and $G-$homotopic chain complexes.

 Let $\epsilon:\tilde K_{0}\to \tilde K_{-1}=\tilde T_{0}$ be the augmentation map which assigns to $(\beta,t)$ the element $\gamma\in \tilde T_{0}$ such that $\tilde D_{\beta}\subset \tilde X_{\gamma}$. Let $(\tilde K_{\epsilon},\partial)$ be the augmented complex with respect to $\epsilon$.

  The group $G$ acts naturally on $(\tilde K_{\epsilon},\partial)$.

One notes that $\epsilon$ sends a non compact connected divisor (resp.  with infinite isotropy) to a non compact connected componant of $p^{-1}(X)$ (resp. with infinite isotropy).

\begin{definition} Recall that $G$ acts on $\tilde K$.
\item[1)] Let $\tilde K(\infty)$ be the $\Delta-$complex consisting of simplexes $(\beta,o)$ of $\tilde K$ whose isotropy subgroup $G_{(\beta,o)}$ under the action of $G$ is  infinite.
\item[2)] Let $\tilde K(\nc)$ be the $\Delta-$complex consisting of simplexes $(\beta,o)$ such that $\tilde D_{\beta}$ is non compact.
\item[3)] Let $\tilde K_{\epsilon}(\infty)$ and $\tilde K_{\epsilon}(\nc)$ be the correponding augmented complexes .
\end{definition}

Notations: Let $A$ be an abelian group, and $K'$ be a $\Delta-$subcomplex of a $\Delta-$complex $K$ with augmentation $\epsilon:K_{0}\to K_{-1}$. Then the induced augmented complex $K'_{\epsilon}$ is defined by $\epsilon:K'_{0}\to \epsilon(K'_{0})=K'_{-1}$. Then $(C_{.}(K_{\epsilon}, K_{\epsilon}', A),\partial)=([C_{.}(K_{\epsilon})/C_{.}(K'_{\epsilon})]\otimes A,\partial\otimes 1_{A})$ is the relative augmented chain complex with coefficients in $A$.

\subsubsection{}\label{modified boundary} One can compute the relative homology groups $H_{k}(K_{\epsilon},K'_{\epsilon})$ using a modified boundary map which anihilates boundaries liying in $K'_{\epsilon}$ (See \cite{FerPic} remark II.4.8). Such boundary maps will appear in computation of the relative homology of square integrable chains:

Let $C_{k}(K'_{\epsilon})$, respectively $C_{k}(K_{\epsilon}\setminus K'_{\epsilon})$, be the free abelian groups generated by the simplexes contained in $K'_{\epsilon}$, respectively not contained in $K'_{\epsilon}$. Let $a'_{.}: C_{.}(K'_{\epsilon})\to C_{.}(K_{\epsilon})$, $a_{.}: C_{.}(K_{\epsilon}\setminus K'_{\epsilon})\to C_{.}(K_{\epsilon})$, $b'_{.}: C_{.}(K_{\epsilon})\to C_{.}(K'_{\epsilon})$, $b_{.}:C_{.}(K_{\epsilon})\to C_{.}(K_{\epsilon}\setminus K'_{\epsilon})$ be the natural maps defining the splitting  $C_{.}(K_{\epsilon})= C_{k}(K'_{\epsilon})\oplus C_{k}(K_{\epsilon}\setminus K'_{\epsilon})$. The maps $\partial=b_{k-1}\partial a_{k}: C_{k}(K_{\epsilon}\setminus K'_{\epsilon})\to C_{k-1}(K_{\epsilon}\setminus K'_{\epsilon})$ define a complex $(C_{.}(K_{\epsilon}\setminus K'_{\epsilon}),\partial)$.
  The maps $i_{k}:C_{k}(K_{\epsilon}\setminus K'_{\epsilon})\to C_{k}(K_{\epsilon})\to C_{k}(K_{\epsilon})/C_{k}(K'_{\epsilon})$  define an isomorphism of complexes $i: (C_{.}(K_{\epsilon}\setminus K'_{\epsilon}),\partial)\to (C_{.}(K_{\epsilon},K'_{\epsilon}),\partial)$.

Hence the relative homology groups $H_{k}(K,K')$ are isomorphic to the homology of the complex $(C_{.}(K_{\epsilon}\setminus K'_{\epsilon}),\partial)$.
If $K'$ is a $G-\Delta-$subcomplex then $i$ is an isomorphism of $\ZZ[G]-$complexes.

\begin{example}
\label{one connected non compact}
\item[1)]Assume $\tilde X$ is connected non compact and $G$ is infinite then $\tilde K_{-1}=\tilde K_{-1}(\infty)=\tilde K_{-1}(\nc)=\{\tilde X\}$. Hence the relative chain complexes $(C_{.}(\tilde K_{\epsilon},\tilde K_{\epsilon}(\infty)),\partial)$ and $(C_{.}(\tilde K,\tilde K(\infty)),\partial)$ are equal. 
\item[2)] If $G$ is finite then $\tilde K_{\epsilon} (\infty)=\emptyset$.
\item[3)] If the action of $G$ on $p:\tilde X\to X$ is proper then $\tilde K(\infty)=\tilde K(\nc)$. Moreover, $\tilde K_{\epsilon} \setminus\tilde K_{\epsilon}(\nc)$ is the set of (closed) simplexes which are faces of finitely many simplexes. 
\end{example}

Let $(C_{.}^{sing}(|\tilde K|),\partial)$ be the complex of singular chains.

\begin{lemma}\label{lemme un}
\item[1)]
 Let $\tilde K(\infty)$ be the $\Delta-$complex consisting of simplexes $(\beta,o)$ whose isotropy subgroup $G_{(\beta,o)}$ under the action of $G$ is  infinite. Then the map of relative chain complexes $(C_{.}(\tilde K,\tilde K(\infty),\QQ),\partial)\to (C_{.}^{sing}(|\tilde K|,|\tilde K(\infty)|,\QQ),\partial)$ is a $\QQ[G]-$homotopy equivalence (see \cite{Luc} p.264).
\item[2)] Assume that $G$ is infinite. Then  the morphism
\begin{eqnarray*}
(l^{2}(G)\otimes_{\CC[G]} C_{.}(\tilde K_{\epsilon},\CC),\partial) &\to & (l^{2}(G) \otimes_{\CC[G]} C_{.}(\tilde K_{\epsilon},\tilde K_{\epsilon}(\infty),\CC),\partial) 
\end{eqnarray*}
is an isomorphism in 
   $ \Mod(N(G))_{/\tau_{\U(G)}}$ (see \cite{Luc} th.6.54).

\item[3)] Define a complex of square integrable chains by
 $$C^{(2)}_{.}(\tilde K_{\epsilon}\setminus\tilde K_{\epsilon}(\infty))=\{ \sum a_{(\beta,o)}(\beta,o) ,\, \text{ with } (\beta,o)\in \tilde K_{\epsilon, .}\setminus \tilde K_{\epsilon}(\infty) \,{/} \,\sum |a_{(\beta,o)}|^{2}|G_{\beta}|^{-1}<+\infty\}$$ with boundary 
  \begin{eqnarray}\label{equation of the modified boundary}
 \partial (\beta,o)=\begin{cases}
 \sum_{\,\partial_{m}(\beta,o)\not\in \tilde K(\infty) }(-1)^{m}\partial_{m}(\beta,o) & \text{ if }(\beta,o)\not\in \tilde K_{0}, \\
 \epsilon(\beta,o)=\gamma & \text{ if }  (\beta,o)\in \tilde K_{0}\setminus \tilde K_{0}(\infty), \,\gamma\not\in\tilde K_{-1}(\infty) ,\\
 0 & otherwise.
\end{cases}
\end{eqnarray}
Assume the number of $G-$orbits of $\tilde K_{\epsilon}\setminus\tilde K_{\epsilon}(\infty)$  is finite then 

 $(l^{2}(G) \otimes_{\CC[G]} C_{.}(\tilde K_{\epsilon},\tilde K_{\epsilon}(\infty),\CC),\partial)$ is $\CC[G]-$isomorphic to $( C^{(2)}_{.}(\tilde K_{\epsilon}\setminus\tilde K_{\epsilon}(\infty)),\partial)$.
\end{lemma}
\begin{proof} Indeed   $C_{k}(\tilde K)\simeq \oplus_{\sigma\in \Sigma_{k}}\ZZ[G/G_{\sigma}]$ where $\Sigma_{k}$ is a set of  representatives for the $G-$orbits of  $k-$simplexes (compare with Brown \cite{Bro} p. 68 example 5.5b). Let $\tilde K'$ be a sub-$(G,\Delta)-$complex of $\tilde K$.  Let  $C_{.}(\tilde K\setminus \tilde K')\simeq \oplus_{\sigma\in \Sigma_{.}\setminus \tilde K'}\ZZ[G/G_{\sigma}]$ be the free group on cells in $\tilde K\setminus \tilde K'$. Then $ C_{.}(\tilde K\setminus \tilde K')\to C_{.}(\tilde K,\tilde K')$ is a $\ZZ[G]-$isomorphism whose inverse  defines a $\ZZ[G]-$splitting of $0\to C_{.}(\tilde K')\to  C_{.}(\tilde K)\to C_{.}(\tilde K,\tilde K')\to 0$.  
Assume moreover that $\tilde K(\infty)\subset \tilde K'$.  Then $C_{.}(\tilde K,\tilde K')\otimes\QQ$ and $C_{.}^{sing}(|\tilde K|,|\tilde K'|)\otimes\QQ$ are projective $\QQ[G]-$modules (\cite{Bro} ex.4 p. 30). 
 If $\tilde K'$ is a sub-$\Delta-$complex then the map of relative chain complexes $(C_{.}(\tilde K,\tilde K'),\partial)\to (C_{.}^{sing}(|\tilde K|,|\tilde K'|),\partial)$ is a quasi-isomorphism (Hatcher \cite{Hat} th.2.27 p. 137). 
We conclude that the $\QQ[G]-$quasi-isomorphism of projective complexes $(C_{.}(\tilde K,\tilde K(\infty),\QQ),\partial)\to (C_{.}^{sing}(|\tilde K|,|\tilde K(\infty)|,\QQ),\partial)$ is a homotopy equivalence (\cite{Bro} p. 29 th.8.4). This prove $1)$.
%

 Let us prove $2)$. The proof of $1)$ shows that
 $0\to C_{.}(\tilde K_{\epsilon}(\infty))\to  C_{.}(\tilde K_{\epsilon})\to C_{.}(\tilde K_{\epsilon},\tilde K_{\epsilon}(\infty))\to 0$ is a split exact $\ZZ[G]-$sequence.
Hence the following sequence is exact: $$0\to l^{2}(G)\otimes_{\CC[G]}C_{.}(\tilde K_{\epsilon}(\infty),\CC)\to l^{2}(G)\otimes_{\CC[G]}  C_{.}(\tilde K_{\epsilon},\CC)\to l^{2}(G)\otimes_{\CC[G]} C_{.}(\tilde K_{\epsilon},\tilde K_{\epsilon}(\infty),\CC)\to 0\,.$$ From lemma \ref{infinite isotropy}, the first term is isomorphic to zero in $\Mod(N(G))_{/\tau_{\U(G)}}$. 

In  order to prove $3)$, we define a map from $l^{2}(G)\otimes_{\CC[G]} C_{.}(\tilde K_{\epsilon}\setminus \tilde K_{\epsilon}(\infty),\CC)$ to $C^{(2)}_{.}(\tilde K_{\epsilon}\setminus\tilde K_{\epsilon}(\infty))$ by $f\otimes (\beta,o)\to \sum_{g\in G}f(g^{-1})(g\beta,o)=\sum_{g\in A } [\sum_{h\in G_{\beta}}f (hg^{-1})](g\beta,o)$ with $A$ a set of representative for $G_{\beta}/G$. This map is injective for $f\otimes (\beta,o)=f.\rho(h)\otimes (\beta,o)$ if $h\in G_{\beta}$.
 The boundary map of $(C_{.}(\tilde K_{\epsilon}\setminus \tilde K_{\epsilon}(\infty),\CC),\partial)$ (\ref{modified boundary}) induces a boundary map $\partial$ on  $C^{(2)}_{.}(\tilde K_{\epsilon}\setminus\tilde K_{\epsilon}(\infty))$ given by $(\ref{equation of the modified boundary})$. 
It is bounded if $\sup_{\gamma\in \tilde K_{\epsilon}\setminus \tilde K_{\epsilon}(\infty)}\sum_{\tilde D_{\beta}\subset \tilde D_{\gamma}}\frac{|G_{\beta}|}{ |G_{\gamma}|}<+\infty$. 
This follows if the orbit type of $\tilde K_{\epsilon}\setminus \tilde K_{\epsilon}(\infty)$ is finite. In this case, the above map is moreover surjective.
\end{proof}

\begin{lemma} \label{lemme deux} Let $\beta\in \tilde T_{k}$ and let $n_{\beta}$ be the degree of the covering map $\tilde D_{\beta}\to D_{p(\beta)}=p(\tilde D_{\beta})$ (see \ref{the delta complex}).

Let $(C^{(2)}_{.}(\tilde K_{\epsilon}\setminus\tilde K_{\epsilon}(\nc)),\partial)$ be the complex of square integrable chains $$\{ \sum a_{(\beta,o)}(\beta,o) ,\, \text{ with } (\beta,o)\in \tilde K_{\epsilon, .}\setminus \tilde K_{\epsilon}(\nc) \text{ s.t. }\sum |a_{(\beta,o)}|^{2}(n_{\beta})^{-1}<+\infty\}$$
with bounded boundary map $\partial$ given in $(\ref{equation of the modified boundary})$ with $\tilde K_{\epsilon}(\infty)$ replaced by $\tilde K_{\epsilon}(\nc)$.
%
There exists an isomorphism of complex 
\begin{eqnarray*}
(C_{.}^{(2)}(\tilde K_{\epsilon}\setminus \tilde K_{\epsilon}(\nc)),\partial) &\to & (\H_{d(2)}^{2(n-(.+1))}(p^{-1}(D_{.+1})),q\circ d_{1}).
\end{eqnarray*}
which is compatible with the action of $G$.
\end{lemma}
\begin{proof} If $\beta\in \tilde T_{k}$, then $\H_{d(2)}^{2(n-k)}(\tilde D_{\beta})$ is vanishing if $\tilde D_{\beta}$ is non compact. If $\tilde D_{\beta}$ is compact, we denote by $C_{\beta}$ the unique harmonic form on $\tilde D_{\beta}$ such that $\int_{\tilde D_{\beta}}C_{\beta}=1$. Then $||C_{\beta}||^{2}=V_{\beta}^{-1}=(n_{\beta}V_{p(\beta)})^{-1}$ with $V_{\beta}$ (resp. $V_{p(\beta)}$) is the volume of $\tilde D_{\beta}$  with respect to a pullback metric  on $D_{p(\beta)}$ (resp. the volume of $D_{p(\beta)}$). 

 One extends $C_{\beta}$ by zero to $p^{-1}(D_{k})$. This extension, still denoted by $C_{\beta}$, satisfies the relations $C_{\beta}\rho(g):=g^{*}C_{\beta}=C_{g^{-1}\beta}$. 
Let $\Sigma_{k}$ be a set of representatives for the $G-$orbits of $k-$simplices. 
There is a hilbertian orthogonal decomposition $\H_{d(2)}^{2(n-k)}(p^{-1}(D_{k}))=\oplus_{\beta\in\tilde T_{k}\setminus \tilde T(\nc)}\H_{d(2)}^{2(n-k)}(\tilde D_{\beta})$ and an isometric isomorphism 
$$\H_{d(2)}^{2(n-k)}(p^{-1}(D_{k}))\simeq \oplus_{\beta\in\Sigma_{k}\setminus \tilde T(\nc)}l^{2}(G\diagup G_{\beta})  
\text{  given by  }  f\to \oplus_{\beta\in \Sigma_{k}}(gG_{k}\mapsto <f,C_{g\beta}||C_{g\beta}||^{-1}>).$$
Then, we define 
\begin{eqnarray*}
\chainr:  C^{(2)}_{.}(\tilde K_{\epsilon}\setminus \tilde K_{\epsilon}(\nc)) & \to &	  \H_{d(2)}^{2(n-(.+1))}(p^{-1}(D_{.+1}))\otimes\Lambda^{.}T\\
\sum_{(\beta,o)\in \tilde K_{\epsilon, .}\setminus \tilde K_{\epsilon}(\nc)} a_{(\beta,o)} (\beta,o) &\mapsto & \sum_{(\beta,o)}a_{(\beta,o)}C_{\beta}\otimes\Lambda^{o}t
\end{eqnarray*}
It converts the left action on chains to the right action on cohomology classes.

We prove now that it is a chain map. The use of exterior differential forms in computation of $d_{1}$ lead us to use antisymetric chains:
if $\sigma$ is a permutation of $[k]$ then $(\beta,o\circ\sigma)$ is by definition equal to $\epsilon(\sigma)(\beta,o)$ where $\epsilon(\sigma)$ is the signature of $\sigma$. 
Therefore a chain in  $l^{2}(G)\otimes_{\CC[G]} C_{.}(\tilde K_{\epsilon},\tilde K_{\epsilon}(\infty))$ is defined by $\sum_{(\beta,\{o\}) }a_{(\beta,o)}(\beta,o)$ where $\{o\}=\{o(0),\ldots,o(k)\}$ is identified with the orbit class under the permutation group of the injective map $o:[k]\to T^{k+1}$  and $o\to a_{(\beta,o)}$ is antisymetric in $o$. 
If $\beta\in \tilde T^k$ or $T_{k}$, let $ r_{\beta},  \ldots$ denotes the restrictions of  $p^{*}(r_{n p(\beta)}), \ldots$ to $U_{\beta}$, that connected component of $p^{-1}(U_{n p(\beta)}(\epsilon_{k}))$ which contains $\tilde D_{\beta}$ (notations of \ref{Gysin morphisms}). 
With these conventions, we associate to the square integrable singular chain $c=\sum_{(\beta,\{o\})}a_{(\beta,o)} (\beta,o)\in C^{(2)}_{k}(\tilde K_{\epsilon}\setminus \tilde K_{\epsilon}(\nc))$ the logarithmic form (see \ref{Gysin morphisms})
$$L(c)=\sum_{(\beta,\{o\})}a_{(\beta,o)} r^{*}_{\beta}(C_{\beta})\eta_{(\beta,o)}=\sum_{\{o\}}r^{*}_{o}(\sum_{\beta}a_{(\beta,o)} C_{\beta})\eta_{o}$$ 
whose residu is $$\chainr(c)=\sum_{\{o\}}(\sum_{(\beta,o)}a_{(\beta,o)}C_{\beta})\otimes \Lambda^{o}t \in \oplus_{\{o\}}\H^{2(n-(k+1))}_{d(2)}(p^{-1}(D_{o}))\otimes \Lambda^{k+1}T\,.$$
Here $\eta_{(\beta,o)}$ is the extension by $0$ to $\tilde X$ of $p^{*}(\eta_{(o(0),\ldots,\, o(k))})_{|U_{\beta}}$. Let $(\gamma,o')\in \tilde K_{k-1}$. Then, as in (\ref{Gysin morphisms}), 
\begin{eqnarray*}
Res_{\gamma}dL(c)& = &a_{\gamma}^{*}\left( \sum_{\tilde D_{\beta}\subset \tilde D_{\gamma}}(-1)^{2(n-k-1)}\sum_{i\in T\setminus o'([k-1])}a_{(\beta,(i,o'))} \omega_{i}r_{\beta}^{*}(C_{\beta}) \right) \otimes \Lambda^{o'}t  \,.\\
\end{eqnarray*}

Let $q_{\gamma}$ be the orthogonal projection $L^{2}(\tilde D_{\gamma},\Lambda^{2(n-k)})\to \H^{2(n-k)}_{d(2)}(\tilde D_{\gamma})$. It is non vanishing only if $\tilde D_{\gamma}$ is compact. In this case,  
$$\int_{\tilde D_{\gamma}}\omega_{i}r^{*}_{\beta}C_{\beta}=\int_{p^{-1}( D_{i})\cap \tilde D_{\gamma}}r^{*}_{\beta}C_{\beta}=\int_{\tilde D_{\beta}}r^{*}_{\beta}C_{\beta}=1$$ 
for $\supp (r^{*}_{\beta}C_{\beta})\cap p^{-1}( D_{i})\cap \tilde D_{\gamma}=\tilde D_{\gamma}$  (see \ref{Gysin morphisms}). Hence $q_{\gamma}(\omega_{i}r^{*}(C_{\beta}))=C_{\gamma}$ and 
\begin{eqnarray*}
q_{\gamma}\circ Res_{\gamma}d L(c)& = &(-1)^{2(n-k-1)}\sum_{\tilde D_{\beta}\subset \tilde D_{\gamma}}(\sum_{i\in T\setminus o'([k-1])}a_{(\beta,(i,o'))})C_{\gamma} \otimes \Lambda^{o'}t\\
&=&\left(\sum_{(\beta,\{o\})}a_{(\beta,o)}\epsilon((\beta,o):(\gamma,o'))\right)C_{\gamma} \otimes \Lambda^{o'}t
\end{eqnarray*}
where $\epsilon((\beta,o):(\gamma,o'))$ is vanishing if $\tilde D_{\beta}\not\subset\tilde D_{\gamma}$ and is equal to the signature of the permutation mapping $o$ to $(i,o')$ if  $\tilde D_{\beta}\subset \tilde D_{\gamma}$ and $o([k])\setminus o'([k-1])=\{i\}$.
 
Let $q=\oplus_{\gamma}q_{\gamma}$ be the orthogonal projection $L^{2}(p^{-1} (D_{k}),\Lambda^{2(n-k)})\to \H^{2(n-k)}_{d(2)}(p^{-1} (D_{k}))$. It will be identified with the induced projection $H^{2(n-k)}_{d(2)}(p^{-1}(D_{k}))\to \H^{2(n-k)}_{d(2)}(p^{-1} (D_{k}))$.
Then 
\begin{eqnarray*}
q\circ d_{1}(\chainr(c)) =  \sum_{\gamma,\{o'\}} q_{\gamma}\circ Res_{\gamma}d L(c)
 & = & \sum_{(\beta,\{o\})}\sum_{(\gamma,\{o'\})}a_{(\beta,o)}\epsilon((\beta,o):(\gamma,o'))C_{\gamma} \otimes \Lambda^{o'}t\\
 \sum_{(\beta,\{o\})}\sum_{i\in \{o\}}a_{(\beta,o)}\epsilon((\beta,o):\Rm{i}{(\beta,o)})C_{\Rm{i}{\beta}} \otimes \Lambda^{\Rm{i}{o}}t &=  &\chainr(\partial c) \,.
\end{eqnarray*}

One notes that the boundary maps are bounded for 
\begin{eqnarray*}
\sum_{\gamma\in \tilde T_{k}}(\sum_{\tilde D_{\beta}\subset \tilde D_{\gamma}}|a_{(\beta,o)}|)^{2}\frac{1}{n_{\gamma}}& \leq & \sum_{\gamma}(\sum_{\tilde D_{\beta}\subset \tilde D_{\gamma}}\frac{|a_{(\beta,o)}|^{2}}{n_{\beta}})(\sum_{\tilde D_{\beta}\subset \tilde D_{\gamma}}\frac{n_{\beta}}{n_{\gamma}})\\
&\leq & (k+1) \left(\sum_{\beta}|a_{(\beta,o)}|^{2}\frac{1}{n_{\beta}} \right) \max_{\gamma\in T_{k}}\card\{\beta\in T_{k+1}: D_{\beta}\subset D_{\gamma})\,.
\end{eqnarray*}
\end{proof}

The following theorem holds for any real torsion theory greater or equal to $\tau_{\U(G)}$ (e.g. $\tau_{dim}$). 
%
%
%
%
\begin{theorem} \label{relation avec le complexe dual} Let $\tilde X\to X$ be a $G-$covering of a compact $n-$dimensional K\"{a}hler manifold, let $D$ be a normal crossing divisor in $X$ and let $\tilde K$ be the associated dual complex.
Let $\tilde K_{\epsilon}(\infty)$  be the augmented subcomplex of cells of infinite isotropy subgroups. Theses cells are in one-to-one correspondance with the set of the non compact connected components of $\bigsqcup_{I\in \P(T)}p^{-1}(D_{I})$. 
 There exists  isomorphisms in $\Mod (N(G))$
\begin{eqnarray*}
   \chainr: (l^{2}(G)\otimes_{\CC[G]} C_{.}(\tilde K_{\epsilon},\tilde K_{\epsilon}(\infty)),\partial)  &\to & (\H_{d(2)}^{2(n-(.+1))}(p^{-1}(D_{.+1})),q\circ d_{1}) 
\end{eqnarray*}
which induce the isomorphisms in $\Mod(N(G))_{/\tau_{\U(G)}}$
\begin{eqnarray*}H_{k,(2)}(\tilde K_{\epsilon},\tilde K_{\epsilon}(\infty)) & \to & Gr_{2n}^{W}H^{2n-(k+1)}(X\setminus D,\p\CC)\hspace{2cm} \text{for  } k\geq -1.
\end{eqnarray*}

 Hence $Gr_{2n}^{W}H^{2n-k}(X\setminus D,\p\CC)$ is non vanishing in $\Mod(N(G))_{/\tau_{\U(G)}}$ implies that there exists compact connected components in $p^{-1}(D_{k})$ $(k\geq 0)$. 
 \end{theorem}
\begin{proof}
\item[] The hypothesis implies that $\tilde K_{\epsilon}(\infty)=\tilde K_{\epsilon}(n.c.)$, $|G_{\beta}|=n_{\beta}$ and the set of orbits of $\tilde K_{\epsilon}\setminus \tilde K_{\epsilon}(\infty)$ is finite. Then  $(l^{2}(G)\otimes_{\CC[G]} C_{.}(\tilde K_{\epsilon},\tilde K_{\epsilon}(\infty)),\partial)$ is identified with $(C_{.}^{(2)}(\tilde K_{\epsilon}\setminus\tilde K(n.c.)),\partial)$ and we conclude from lemma \ref{lemme un} and \ref{lemme deux}.

\item[]
Corollary \ref{Reduction with respect to the dimension torsion theory} implies that $H_{k-1,(2)}(\tilde K_{\epsilon},\tilde K_{\epsilon}(\infty))  \to  Gr_{2n}^{W}H^{2n-k}(X\setminus D,\p\CC)$ is an isomorphism in $\Mod(N(G))_{/\tau_{\U(G)}}$ if $G$ is infinite.
\end{proof}

\begin{remarks}
\item[1)] Let  $\dim_{\CC}\tilde D_{\beta}=n-k$ and let $G_{\beta}$ be the stabiliser of the irreducible component $\tilde D_{\beta}$. Then the operator$\Im (\dbar):L^{2}(\tilde D_{\beta},\Lambda^{2n-2k-1})\to L^{2}(\tilde D_{\beta},\Lambda^{2n-2k})$ has close range iff $G_{\beta}$ is non-amenable.
(see \Broo, \SalWoe). 
\item[2)] In $Mod(N(G))_{/\tau_{\U(G)}}$, we have $ H_{k,(2)}(\tilde K,\tilde K(\infty))\simeq \H_{k,(2)}(\tilde K,\tilde K(\infty))$ where $\H_{k,(2)}(\tilde K,\tilde K(\infty))$ is the (simplicial) harmonic space associated to the finitely generated hilbertian complex\\ $(l^{2}(G)\otimes_{\CC[G]} C_{.}(\tilde K,\tilde K(\infty)),\partial)$.
\end{remarks}

\subsection{Normal crossing divisor such that $X\setminus D$ is Stein.}
 
 Assume that $X\setminus D$ is Stein. Then the sheaves $\jp\Omega^{.}_{X\setminus D}$ are $\Gamma(X\setminus D,.)-$acyclic (\cite{CamDem}). From proposition \ref{filteredquasiisomorphism}, we deduce:
 \begin{lemma} Let $p:\tilde X\to X$ be a covering of a complex manifold with transformation group $G$. Assume that $X\setminus D$ is Stein. Then
 $H^{.}(X\setminus D,\p\CC)$ is $N(G)-$isomorphic to $H^{.}(\Gamma(X\setminus D,\jp\Omega^{.}_{(2)}),d)\,.$
 In particular, $k>\dim X$ implies $H^{k}(X\setminus D,\p\CC)=0\,.$
 \end{lemma}
 This last statement may be proved by other topological methods.
 
 One deduces that the dual complex associated to $p^{-1}(D)$ is acyclic for the reduced $l^{2}-$cohomology up to degree $\dim_{\CC} X-1$:
 \begin{proposition} Let $p:\tilde X\to X$ be a $G-$covering of a compact K\"{a}hler manifold. Let $D$ be a normal crossing divisor in $X$. Assume that $X\setminus D$ is Stein. Then the  relative homology group $H_{i(2)}(\tilde K\setminus \tilde K(\infty))$ of the dual complex  is vanishing in $\Mod(N(G))_{/\tau_{\U(G)}}$ when $i+1<n=\dim_{\CC}X$. 
\end{proposition}
\begin{proof} This is a consequence of theorem \ref{relation avec le complexe dual} and the above lemma.
\end{proof}

\begin{example} Assume $X$ is projective and $\dim_{\CC}X=n\geq 2$. Let $D$ be a divisor in $X$ and let $D_{v}$ be a generic hyperplane section such that $D\cup D_{v}$ is a normal crossing divisor. We compare the dual $CW-$complex $\tilde K$ associated to $p^{-1}(D)$ with $\tilde K'$ the one associated to $p^{-1}(D\cup D_{v})$. We recover directly the vanishing result of $H_{i}(l^{2}(G)\otimes_{\CC[G]}C_{.}(\tilde K',\tilde K'(\infty),\CC))$ for $i\leq n-2$.

Roughly speaking, $\tilde K'$ is obtained from the formal cone $w\star \tilde K$ by removing $n-$cells and replacing a formal $(n-1)-$dimensional cell $w\star (\beta,o)$, with $\tilde D_{\beta}$ a curve, by the collection of $(n-1)-$cells which parametrizes points in $\tilde D_{\beta}\cap p^{-1}(D_{v})$ and attached along there common boundary $w\star\partial(\beta,o) \cup (\beta,o)$: Let $\tilde T'_{k}\ni \gamma\to \tilde D_{\gamma}$ be a set which parametrizes the connected components of codimension $k$  of $p^{-1}(\sqcup_{\card I=k,I\subset T\cup\{v\}}D_{I})$. Let us assume that $T_{k}\subset T'_{k}$.
One sets $\tilde D_{w}=p^{-1}(D_{v})$. We choose an ordering on $T'=T\cup\{v\}$ such that $v$ is the biggest element of $T'$. Let $(\gamma,o')\in \tilde K'_{k-1}$ be such that $\tilde D_{\gamma}\subset p^{-1}(D_{v})$ and let $k\geq 2$. There exists a unique $(\beta, o)$ in $\tilde K_{k-2}$ such that $\tilde D_{\gamma}\subset \tilde D_{\beta}\cap p^{-1}(D_{v})$. Hence $(o'(0),\ldots,o'(k-1))=(o(0),\ldots,o(k-2),w)$ and 
 $\partial_{k-1}(\gamma, o')=(\beta,o)$.
 The Lefschetz's theorem implies that $\tilde D_{\gamma}=\tilde D_{\beta}\cap p^{-1}(D_{v})$ if $\dim_{\CC}\tilde D_{\beta}\geq 2$. When $\tilde D_{\beta}$ is a curve,  $p^{-1}(D_{v})\cap\tilde D_{\beta}$ is a discrete  set of cardinality $n_{\beta}\card( p(\tilde D_{\beta})\cap D_{v} )$ with $n_{\beta}$ the degree of the covering $\tilde D_{\beta}\to p(\tilde D_{\beta})$.
Let $n- 1\geq k\geq 2$, we can define a map $\star w: \tilde K_{k-2}\to \tilde K'_{k-1}$ by $\star w((\beta,o)):=(\gamma, o')$. Then $\tilde K'_{k-1}=\star w(\tilde K_{k-2})\cup \tilde K_{k-1}$. 
Let $(\beta,o)\in \tilde K_{n-2}$. Choose arbitrarly one connected component $\tilde D_{\gamma}$ in $\tilde D_{\beta}\cap p^{-1}(D_{v})$.  We define $\star w:\tilde K_{n-2}\to \tilde K'_{n-1}$ by $\star w((\beta,o))=(\gamma,(o,w))$. 

Therefore $\tilde K'_{\leq n-2}$ is embedded in the subcomplex $\star w(\tilde K_{\leq n-2})\cup \tilde K_{\leq n-1}\cup\{w\}$ of $\tilde K'$. This subcomplex is isomorphic to a cone with vertex $w$ over $\tilde K_{\leq n-2}$ (see \Mun p. 43). This implies that  $H_{i}(l^{2}(G)\otimes_{\CC[G]}\tilde K')=0$ when $i\leq n-2$. Hence $H_{i}(l^{2}(G)\otimes_{\CC[G]}C_{.}(\tilde K',\tilde K'(\infty),\CC))$ is vanishing for $i\leq n-2$.

\end{example}
 \subsection{Edge Homomorphisms}\label{edge homomorphisms} 
One states here the Edge homomorphisms of the non reduced weight spectral sequences:
 From $E_{1}^{l,k}(W_{-})\simeq H^{k+2l}_{(2)}(p^{-1}(D_{-l}))$, we get that $E_{r}^{l,k}(W_{-})$, $(r\geq 1)$, is non vanishing implies  $-n\leq l\leq 0$ and $-2l\leq k\leq 2n$. 
\begin{lemma}
\item[1)]There exists an epimorphism $E_{2}^{0,k}(W_{-})\to \Im(H^{k}(\tilde X,\p\CC)\to H^{k}(X\setminus D,\p\CC))$.
\item[2)]There exists a  monomorphism $Gr^{W}_{2n}H^{2n-l}(X\setminus D,\p\CC)\to E_{2}^{-l,2n}(W_{-})$.
\item[3)]
The following sequence is exact:
\begin{eqnarray*}
H^{2n-2}(X\setminus D,\p\CC) \to E_{2}^{-2,2n}(W_{-})\overset{d_{2}}{\to} E_{2}^{0,2n-1}(W_{-})\to\\ H^{2n-1}(X\setminus D,\p\CC) \to E_{2}^{-1,2n}(W_{-}) \to 0
\end{eqnarray*}

\end{lemma}
\begin{proof}
We refer to (\cite{God} II.4.5 p. 81) for these well known assertions. 
\end{proof}

\subsection{K\"{a}hler Hyperbolic manifolds} Let $p:\tilde X\to (X,\omega)$ be a $G-$cover. 
  Following \GroKahHyp, the form $\tilde\omega=p^{*}\omega$ is said to be $d(bounded)$ if $\tilde\omega=d \eta$ with $\eta$ a $1-$form which is bounded with respect to $\tilde\omega$. 
 We recall the following theorems on K\"{a}hler hyperbolic manifolds (\cite{GroKahHyp} 1.4.A and 2.5): 
  \begin{theorem}[Gromov]  \item[i)] Let $(\tilde X,\tilde \omega)$ be a complete K\"{a}hler manifold of real dimension $n=2m$ and assume that $\tilde\omega=d\eta$ where $\eta$ is a bounded $1-$form on $\tilde X$. Then there exists a strictly positive constant $\lambda_{0}\geq const_{n} ||\eta||^{-1}_{L\infty}$ such that every $l^{2}-$form $\psi$ of degree $p\not=m$ satisfies the inequality $$<\psi,\Delta \psi>\geq \lambda_{0}^{2}<\psi,\psi>\,.$$
  Furthemore, the above inequality is satisfied by the $l^{2}-$forms of degree $m$ which are orthogonal to the harmonic $m-$forms.
\item[ii)]  Let $(\tilde X,\tilde \omega)\to (X,\omega)$ be the universal cover of a K\"{a}hler manifold. Assume that $\tilde\omega$ is $d(bounded)$. Then the space $\H^{p,q}_{\dbar(2)}(\tilde X)$ of harmonic $l^{2}-$forms on $X$ of bi-degree $(p,q)$ is non vanishing if $p+q=m$.
\end{theorem}

A spectral gap for $\Delta$ acting on forms on any degree implies that $d$, $\delta$, $\dbar$ and $\dbar^{*}$ have closed ranges for the metric is  complete K\"{a}hler and $\Delta=\Delta_{\dbar}$.

Moreover, for any complex submanifold $i:Y\to X$, the pullback induced metric $\widetilde{ i^{*}\omega}$ of the pullback $G-$cover $i^{*}p:(\tilde Y,\widetilde{ i^{*}\omega})\to (Y,i^{*}\omega)$ is also $d(bounded)$. Hence the property of being $d(bounded)$, and then of spectral gap in any degree, in the $G-$cover is hereditary.
 
 From the Lefschetz theorem \cite{BarHulPet}, if $X$ is a projective manifold and $Y$ is a generic intersection of at most $n-2$ hyperplane sections then $\Pi_{1}(Y)\simeq \Pi_{1}(X)$. Hence the above theorem implies that $\H^{p,q}_{\dbar(2)}(\tilde Y)\not=0$ if $p+q=\dim Y$.

 \begin{theorem}
 Let $p:(\tilde X,\tilde\omega)\to (X,\omega)$ be a $G-$cover such that $\tilde\omega$ is $d(bounded)$.  Let $D$ be a normal crossing divisor in $X$.  Then any torsion theory $\tau$ fulfills asumption of theorem \ref{theoreme de structure mixte}. 
 Moreover the weight spectral sequence degenerates at $E_{1}$:
 $$Gr_{k+l}^{W}H^{k}(X\setminus D,\p\CC)\simeq \begin{cases} H^{k-l}_{d(2)}(p^{-1}( D_{l})) &\text{ if }k=n \\
 0 &\text {if } k\not= n
\end{cases}
  $$
  \end{theorem}
Therefore we may choose the trivial torsion theory $\tau=(\{0\},\Mod(N(G))$ (no non zero torsion modules) in the statement of theorem \ref{theoreme de structure mixte}. 
\begin{proof} Recall that $D_{I}:=\cap_{t\in I}D_{t}\to X$.
 Indeed the $l^{2}-$cohomology groups $H^{p,q}_{\dbar(2)}(p^{-1}( D_{I}))$  are reduced for $\dbar_{\tilde D_{I}}$ has a closed range. Hence $\tau_{\dbar, p^{-1}(D_{I})}=(0,\Mod(N(G)))$ (\ref{Dolbeault torsion theory}) is the trivial torsion theory: the $l^{2}-$Hodge to De Rham spectral sequence of $p^{-1}( D_{I})$ degenerates.  Therefore  $\tau_{\dbar,p^{-1}( D_{.})}$ (\ref{normal crossing Dolbeault torsion theory}) is also trivial. According to Lemma \ref{definition de tilde beta} $(3)$, this implies that for each $l\in\ZZ$, $R
\Gamma(Gr^{W}_{l}\K):=[(\Gamma R^l j_{*}\p\RR),\, (R\Gamma Gr^{W}_{l}\p\dlog{.}
{D},F),\, R\Gamma(Gr^{W}_{l}\tilde\beta)]$ is a Hodge complex in $N(G,\RR)$ as required in theorem  \ref{theoreme de structure mixte}.
 
In the $E_{1}-$page of the weight spectral sequence, the only non vanishing terms are $H^{n-l}_{d(2)}(p^{-1}( D_{l}) )$ which are isomorphic to the corresponding harmonic spaces. Therefore the weight spectral sequence degenerates at $E_{1}$, $Gr^{W}_{n+l}H^{n}(X\setminus D,\p\CC)\simeq H^{n-l}_{d(2)}(p^{-1}( D_{l}))$ and $H^{k}(U,\p\CC)$ is vanishing if $k\not=n$.
\end{proof}
 
\nocite{AncGav}
\nocite{Nic}

\end{document}